\documentclass[11pt]{article}

\usepackage{xcolor}
\usepackage{amsmath}
\usepackage{amsfonts}
\usepackage{amssymb}
\usepackage{subfigure}
\usepackage{epsf,fancybox}
\usepackage{mathrsfs}
\usepackage{color}
\usepackage{multirow}
\usepackage{paralist}
\usepackage{verbatim}
\usepackage{galois}
\usepackage{algorithm}
\usepackage{algorithmic}
\usepackage{boxedminipage}
\usepackage{booktabs}
\usepackage{accents}
\usepackage{stmaryrd}

\usepackage{natbib}

\usepackage{url}
\usepackage[colorlinks,linkcolor=magenta,citecolor=blue, pagebackref=true,backref=true]{hyperref}
\renewcommand*{\backrefalt}[4]{%
    \ifcase #1 \footnotesize{(Not cited.)}%
    \or        \footnotesize{(Cited on page~#2.)}%
    \else      \footnotesize{(Cited on pages~#2.)}%
    \fi}

\textheight 8.5truein
\topmargin 0.25in
\headheight 0in
\headsep 0in
\textwidth 6.8truein
\oddsidemargin  0in
\evensidemargin 0in

\newtheorem{theorem}{Theorem}[section]

\newtheorem{lemma}[theorem]{Lemma}

\newtheorem{definition}[theorem]{Definition}

\newtheorem{remark}[theorem]{Remark}
\newtheorem{assumption}[theorem]{Assumption}

\newcommand{\proj}{\mathcal{P}}

\newcommand{\op}{\textnormal{op}}

\newcommand{\x}{\mathbf x}

\newcommand{\argmin}{\mathop{\rm{argmin}}}
\newcommand{\argmax}{\mathop{\rm{argmax}}}

\newcommand{\ECal}{\mathcal{E}}

\newcommand{\XCal}{\mathcal{X}}
\newcommand{\YCal}{\mathcal{Y}}
\newcommand{\WCal}{\mathcal{W}}

\newcommand{\dist}{\textbf{dist}}

\newcommand{\br}{\mathbb{R}}

\newcommand{\ba}{\begin{array}}
\newcommand{\ea}{\end{array}}

\newcommand{\ZCal}{\mathcal{Z}}

\begin{document}


\begin{center}

{\bf{\LARGE{Perseus: A Simple and Optimal High-Order Method for \\ [.2cm] Variational Inequalities}}}

\vspace*{.2in}
{\large{ \begin{tabular}{c}
Tianyi Lin$^\ddagger$ \and Michael I. Jordan$^{\diamond, \dagger}$ \\
\end{tabular}
}}

\vspace*{.2in}

\begin{tabular}{c}
Department of Electrical Engineering and Computer Sciences$^\diamond$ \\
Department of Statistics$^\dagger$ \\
University of California, Berkeley \\
Laboratory for Information and Decision Systems (LIDS), MIT$^\ddagger$
\end{tabular}

\vspace*{.2in}

\today

\vspace*{.2in}

\begin{abstract}
This paper settles an open and challenging question pertaining to the design of simple and optimal high-order methods for solving smooth and monotone variational inequalities (VIs).  A VI involves finding $x^\star \in \XCal$ such that $\langle F(x), x - x^\star\rangle \geq 0$ for all $x \in \XCal$.  We consider the setting in which $F: \br^d \rightarrow \br^d$ is smooth with up to $(p-1)^{\textnormal{th}}$-order derivatives. For $p = 2$, the cubic regularization of Newton's method has been extended to VIs with a global rate of $O(\epsilon^{-1})$~\citep{Nesterov-2006-Constrained}. An improved rate of $O(\epsilon^{-2/3}\log\log(1/\epsilon))$ can be obtained via an alternative second-order method, but this method requires a nontrivial line-search procedure as an inner loop. Similarly, the existing high-order methods based on line-search procedures have been shown to achieve a rate of $O(\epsilon^{-2/(p+1)}\log\log(1/\epsilon))$~\citep{Bullins-2022-Higher,Lin-2023-Monotone,Jiang-2022-Generalized}. As emphasized by~\citet{Nesterov-2018-Lectures}, however, such procedures do not necessarily imply the practical applicability in large-scale applications, and it is desirable to complement these results with a simple high-order VI method that retains the optimality of the more complex methods. We propose a $p^{\textnormal{th}}$-order method that does \textit{not} require any line search procedure and provably converges to a weak solution at a rate of $O(\epsilon^{-2/(p+1)})$. We prove that our $p^{\textnormal{th}}$-order method is optimal in the monotone setting by establishing a lower bound of $\Omega(\epsilon^{-2/(p+1)})$ under a generalized linear span assumption. A restarted version of our $p^{\textnormal{th}}$-order method attains a linear rate for smooth and $p^{\textnormal{th}}$-order uniformly monotone VIs and another restarted version of our $p^{\textnormal{th}}$-order method attains a local superlinear rate for smooth and strongly monotone VIs. Further, the similar $p^{\textnormal{th}}$-order method achieves a global rate of $O(\epsilon^{-2/p})$ for solving smooth and nonmonotone VIs satisfying the Minty condition. Two restarted versions attain a global linear rate under additional $p^{\textnormal{th}}$-order uniform Minty condition and a local superlinear rate under additional strong Minty condition. 
\end{abstract}

\end{center}

\section{Introduction}\label{sec:intro}
Let $\br^d$ be a finite-dimensional Euclidean space and let $\XCal \subseteq \br^d$ be a closed, convex and bounded set with a diameter $D > 0$. Given that $F: \br^d \rightarrow \br^d$ is a continuous operator, a fundamental assumption in optimization theory, generalizing convexity, is that $F$ is \textit{monotone}:
\begin{equation*}
\langle F(x) - F(x'), x- x'\rangle \geq 0, \quad \textnormal{for all } x, x' \in \br^d.  
\end{equation*}
Another useful assumption in this context is that $F$ is $(p-1)^{\textnormal{th}}$-order $L$-smooth; in particular, that it has Lipschitz-continuous $(p-1)^{\textnormal{th}}$-order derivative ($p \geq 1$) in the sense that there exists a constant $L > 0$ such that
\begin{equation}\label{eq:smooth}
\|\nabla^{(p-1)} F(x) -  \nabla^{(p-1)} F(x')\|_\op \leq L\|x - x'\|, \quad \textnormal{for all } x, x' \in \br^d.  
\end{equation}
With these assumptions, we can formulate the main problem of interest in this paper---the \textit{Minty variational inequality} problem~\citep{Minty-1962-Monotone}.  This  consists in finding a point $x^\star \in \XCal$ such that
\begin{equation}\label{prob:Minty}
\langle F(x), x- x^\star\rangle \geq 0, \quad \textnormal{for all } x \in \XCal.  
\end{equation}
The solution to Eq.~\eqref{prob:Minty} is often referred to as a \textit{weak} solution to the variational inequality (VI) corresponding to $F$ and $\XCal$~\citep{Facchinei-2007-Finite}. By way of comparison, the \textit{Stampacchia variational inequality} problem~\citep{Hartman-1966-Some} consists in finding a point $x^\star \in \XCal$ such that
\begin{equation}\label{prob:Stampacchia}
\langle F(x^\star), x- x^\star\rangle \geq 0, \quad \textnormal{for all } x \in \XCal,
\end{equation}
and the solution to Eq.~\eqref{prob:Stampacchia} is called a \textit{strong} solution to the VI corresponding to $F$ and $\XCal$. In the setting where $F$ is continuous and monotone, the solution sets of Eq.~\eqref{prob:Minty} and Eq.~\eqref{prob:Stampacchia} are equivalent. However, these two solution sets are different in general and a weak solution need not exist when a strong solution exists. In addition, computing an approximate strong solution involves a higher computational burden than finding an approximate weak solution~\citep{Monteiro-2010-Complexity,Monteiro-2011-Complexity,Chen-2017-Accelerated}. Earlier works have focused on the asymptotic global convergence analysis of various VI methods under mild conditions~\citep{Lemke-1964-Equilibrium,Scarf-1967-Approximation,Todd-1976-Computation,Hammond-1987-Generalized,Fukushima-1992-Equivalent,Magnanti-1997-Orthogonality}. Two notable exceptions are the generalizations of the ellipsoid method~\citep{Magnanti-1995-Unifying} and the interior-point method~\citep{Ralph-1997-Superlinear}, both of which have been the subject of nonasymptotic complexity analysis.

VIs capture a wide range of problems in optimization theory and beyond, including saddle-point problems and models of equilibria in game-theoretic settings~\citep{Cottle-1980-Variational,Kinderlehrer-2000-Introduction,Tremolieres-2011-Numerical}. Moreover, the challenge of designing solution methods for VIs with provable worst-case bounds has driven significant research over several decades; see~\citep[e.g.,][]{Harker-1990-Finite,Facchinei-2007-Finite}. This research has provided a foundation for work in machine learning in recent years, where general saddle-point problems have emerged in many settings, including generative adversarial networks (GANs)~\citep{Goodfellow-2014-Generative} and multi-agent learning in games~\citep{Cesa-2006-Prediction,Mertikopoulos-2019-Learning}. Some of these applications in ML induce a nonmonotone structure, with representative examples including the training of robust neural networks~\citep{Madry-2018-Towards} or robust classifiers~\citep{Sinha-2018-Certifiable}. 

Building on seminal work in the context of high-order optimization~\citep{Baes-2009-Estimate,Birgin-2017-Worst}, we tackle the challenge of developing $p^{\textnormal{th}}$-order methods for VIs via an inexact solution of regularized subproblems obtained from a $(p-1)^{\textnormal{th}}$-order Taylor expansion of $F$. Accordingly, we make the following assumptions throughout this paper.
\begin{itemize}
\item[\textbf{A1}.] $F: \br^d \rightarrow \br^d$ is $(p-1)^{\textnormal{th}}$-order $L$-smooth.  
\item[\textbf{A2}.] The subproblem based on a $(p-1)^{\textnormal{th}}$-order Taylor expansion of $F$ and a convex and bounded set $\XCal$ can be computed approximately in an efficient manner (see Section~\ref{sec:DE} for details). 
\end{itemize}
For the first-order VI methods (i.e., $p=1$),~\citet{Nemirovski-2004-Prox} has proved that the extragradient (EG) method~\citep{Korpelevich-1976-Extragradient,Antipin-1978-Method} converges to a weak solution with a global rate of $O(\epsilon^{-1})$ if $F$ is monotone and Eq.~\eqref{eq:smooth} holds. There are other methods with the same global rate guarantee, including forward-backward splitting method~\citep{Tseng-2000-Modified}, optimistic gradient (OG) method~\citep{Popov-1980-Modification,Mokhtari-2020-Convergence,Kotsalis-2022-Simple} and dual extrapolation method~\citep{Nesterov-2007-Dual}. All these methods match the lower bound of~\citet{Ouyang-2021-Lower} and are thus optimal. In addition, a general adaptive line search framework has been proposed to unify and extend several convergence results from the VI literature~\citep{Magnanti-2004-Solving}. 

The investigation of second-order and high-order ($p \geq 2$) counterparts of these first-order methods is less advanced, as exploiting high-order derivative information is much more involved for VIs~\citep{Nesterov-2006-Constrained,Monteiro-2012-Iteration}. Aiming to fill this gap, some work has been recently devoted to studying high-order extensions of first-order VI methods~\citep{Bullins-2022-Higher,Lin-2023-Monotone,Jiang-2022-Generalized}. These extensions attain a rate of $O(\epsilon^{-2/(p+1)}\log\log(1/\epsilon))$ but require a nontrivial line-search procedure at each iteration. Although the $\log\log(1/\epsilon)$ factor is modest by itself, it reflects complexity in current design of high-order VI methods, a complexity which might hinder practical application. Notably,~\citet[page 305]{Nesterov-2018-Lectures} emphasized the difficulty of removing the line search procedure without sacrificing the global rate of convergence and highlighted the goal of obtaining a simple and optimal high-order method as an open and challenging question. We summarize the challenge as follows:
\begin{center}
\textbf{Can we design a simple and optimal $p^{\textnormal{th}}$-order VI method without line search?}
\end{center}
In this paper, we present an affirmative answer to this query by identifying a $p^{\textnormal{th}}$-order method that achieves a global rate of $O(\epsilon^{-2/(p+1)})$ while dispensing entirely with the line-search inner loop. The core idea of the proposed method is to incorporate a simple adaptive strategy into a high-order generalization of the dual extrapolation method. 

There are two main reasons why we choose the dual extrapolation method as a base algorithm for our high-order methods. First, the dual extrapolation method has its own merits as summarized in~\citet{Nesterov-2007-Dual}, and the second-order VI method to attain a global convergence rate of $O(\epsilon^{-1})$~\citep{Nesterov-2006-Constrained} was firstly developed based on a dual extrapolation step. Our method can be interpreted as an adaptive variant of this method (see Section~\ref{subsec:NDE}). Second, the dual extrapolation step is an important ingredient for algorithm design in optimization, given the close relationship between extrapolation and acceleration in the context of several first-order methods for smooth convex optimization~\citep{Lan-2018-Optimal,Lan-2018-Random}. This is in contrast to the EG method, which is an approximate proximal point method~\citep{Mokhtari-2020-Convergence}. It would deepen our understanding of dual extrapolation if we could design a simple and optimal high-order VI method based on this scheme. 

\paragraph{Contributions.} The contribution of this paper consists in fully closing the gap between the upper and lower bounds in the monotone setting and improving the state-of-the-art upper bounds in the strongly monotone and/or structured non-monotone settings. In further detail: 
\begin{enumerate}
\item We present a new $p^{\textnormal{th}}$-order method for solving smooth and monotone VIs where $F$ has a Lipschitz continuous $(p-1)^{\textnormal{th}}$-order derivative and $\XCal$ is convex and bounded.  We prove that the number of calls of subproblem solvers required by our method to find an $\epsilon$-weak solution is bounded by 
\begin{equation*}
O\left(\left(\frac{LD^{p+1}}{\epsilon}\right)^{\frac{2}{p+1}}\right). 
\end{equation*}
We prove that our $p^{\textnormal{th}}$-order method is indeed optimal by establishing a matching lower bound of $\Omega(\epsilon^{-2/(p+1)})$ under a generalized linear span assumption. We propose a restarted version of our $p^{\textnormal{th}}$-order method for solving smooth and $p^{\textnormal{th}}$-order uniformly monotone VIs~\citep{Bauschke-2017-Convex}. In particular, these are problems in which there exists a constant $\mu > 0$ such that\footnote{We refer to~\citet[Chapter~22]{Bauschke-2017-Convex} for a more general definition of uniformly monotone operators and relevant discussions. This class of operators are closely related to a direct generalization of uniformly convex functions~\citep[Section 2]{Nesterov-2008-Accelerating}.} 
\begin{equation*}
\langle F(x) - F(x'), x- x'\rangle \geq \mu\|x - x'\|^{p+1}, \quad \textnormal{for all } x, x' \in \br^d.  
\end{equation*}
We show that the number of calls of subproblem solvers required to find $\hat{x} \in \XCal$ satisfying $\|\hat{x} - x^\star\| \leq \epsilon$ is bounded by 
\begin{equation*}
O\left(\kappa^{\frac{2}{p+1}}\log_2\left(\frac{D}{\epsilon}\right)\right),
\end{equation*}
where $\kappa = L/\mu$ refers to the condition number of $F$. Focusing on smooth and strongly monotone VIs, where there exists a constant $\mu > 0$ such that 
\begin{equation*}
\langle F(x) - F(x'), x- x'\rangle \geq \mu\|x - x'\|^2, \quad \textnormal{for all } x, x' \in \br^d, 
\end{equation*}
we show that another restarted version of our $p^{\textnormal{th}}$-order method can achieve a local superlinear rate for the case of $p \geq 2$. 

\item We show how to modify our framework such that it can be used for solving smooth and nonmonotone VIs satisfying the so-called Minty condition (see Definition~\ref{def:Minty}).  Again, we note that a line-search procedure is not required. We prove that the number of calls of subproblem solvers to find an $\epsilon$-strong solution is bounded by 
\begin{equation*}
O\left(\left(\frac{LD^{p+1}}{\epsilon}\right)^{\frac{2}{p}}\right). 
\end{equation*}
Two restarted version of our $p^{\textnormal{th}}$-order method attain a global linear rate under additional $p^{\textnormal{th}}$-order uniform Minty condition and a local superlinear rate (for the case of $p \geq 2$) under additional strong Minty condition. 
\end{enumerate}
\paragraph{Comparison to~\citet{Adil-2022-Optimal}.} Concurrently appearing on arXiv,~\citet{Adil-2022-Optimal} has established the same upper bounds as ours for a high-order generalization of the EG method for solving smooth and monotone VIs. Their method was later extended by two subsequent works to solve strongly monotone VIs~\citep{Huang-2022-Approximation} and nonmonotone VIs satisfying the Minty condition~\citep{Huang-2023-Beyond}. It is also worth remarking that the methods from~\citet{Huang-2022-Approximation,Huang-2023-Beyond} are based on similar restarting strategies but their refined analysis leads to a better convergence rate guarantee for solving strongly monotone VIs (up to a log factor) (see the discussion after~\citet[Theorem 3.2]{Huang-2022-Approximation}). 

A lower bound has been established in~\citet{Adil-2022-Optimal} for \textit{a class of $p^{\textnormal{th}}$-order methods restricted to solving the primal problem}. This is a rather strong limitation that excludes both our method and their method. We derive the same lower bound for a broader class of $p^{\textnormal{th}}$-order methods that include both our method and their method thanks to the construction of a new hard instance. Although the hard instance function is different (and the lower bound does improve), we do wish to acknowledge that the proof techniques from~\citet{Adil-2022-Optimal} inspired our analysis. 

\paragraph{Comparison to~\citet{Bullins-2022-Higher,Lin-2023-Monotone,Jiang-2022-Generalized}.} Prior to our work and the concurrent work of~\citet{Adil-2022-Optimal}, all existing high-order VI methods have been designed based on line-search procedures and are shown to achieve a rate of $O(\epsilon^{-2/(p+1)}\log\log(1/\epsilon))$ for solving smooth and monotone VIs. In this context,~\citet{Bullins-2022-Higher} was the first to prove improved rates for solving monotone VIs with third-order smoothness and beyond. Their method requires an oracle for finding a fixed point of a nonlinear equation using an implicit update. This necessities a nontrivial line-search procedure per iteration and leads to a global rate of $O(\epsilon^{-2/(p+1)}\log(1/\epsilon))$. Subsequently,~\citet{Lin-2023-Monotone} investigated the dynamics of high-order VI methods from a continuous-time viewpoint by proposing a novel closed-loop control system. Their analysis offers a simplification of existing analyses in~\citet{Bullins-2022-Higher} as well as new results concerning high-order VI methods. However, the method from~\citet{Lin-2023-Monotone} still requires line search and the obtained rate is the same. By distilling the idea of optimism,~\citet{Jiang-2022-Generalized} proposed a generalized optimistic method with a novel adaptive line search procedure that provably only requires an $O(\log\log(1/\epsilon))$ calls to a subproblem solver per iteration on average. This leads to an improve rate of $O(\epsilon^{-2/(p+1)}\log\log(1/\epsilon))$ in monotone setting and $O((\kappa D^{p-1})^{\frac{2}{p+1}} + \log\log(1/\epsilon))$ in strongly monotone setting. While these line-search-based VI methods are indeed an achievement and can be amenable to implementation, it would be important to understand whether or not there exists a simple and optimal high-order VI method that has no need for a line-search procedure. 

In comparison to~\citet{Bullins-2022-Higher,Lin-2023-Monotone,Jiang-2022-Generalized}, we believe that our method offers advantages in terms of simplicity. From an algorithmic design viewpoint, we incorporate a simple adaptive strategy into a high-order generalization of the dual extrapolation method, dispensing entirely with the line-search inner loop. Regarding technical parts, our convergence analysis only depends on a simple Lyapunov function and is easy to understand. However, it is worth remarking that our work does not eliminate the potential advantages of using line search. In fact, while the optimal first-order VI methods do not require line search,~\citet{Magnanti-2004-Solving} showed the benefits of adaptive line search by proposing a general framework to unify and extend several convergence results from the literature. The preliminary numerical results also confirmed that the adaptive line search procedure is fast in practice yet not very stable~\citep{Lin-2022-Explicit}. Nonetheless, we believe it is promising to study the line search procedure from~\citet{Jiang-2022-Generalized} and see if modifications can speed up high-order VI methods in a universal manner. 

\paragraph{Further related work.} In addition to the aforementioned works, we review relevant research on high-order convex optimization. We focus on $p^{\textnormal{th}}$-order methods for $p \geq 2$.

The systematic investigation of the global convergence rate of second-order methods originates in work on  the cubic regularization of Newton's method (CRN)~\citep{Nesterov-2006-Cubic} and its accelerated counterpart (ACRN)~\citep{Nesterov-2008-Accelerating}. The ACRN method was then extended with a $p^{\textnormal{th}}$-order regularization model, yielding an improved global convergence rate of $O(\epsilon^{-1/(p+1)})$~\citep{Baes-2009-Estimate}, while an adaptive $p^{\textnormal{th}}$-order method was proposed in~\citet{Jiang-2020-Unified} with the same global rate guarantee. This extension was recently revisited by other works~\citep{Nesterov-2021-Implementable,Grapiglia-2023-Adaptive} with a discussion on an efficient implementation of a third-order method. Meanwhile, within the accelerated Newton proximal extragradient (ANPE) framework~\citep{Monteiro-2013-Accelerated}, a $p^{\textnormal{th}}$-order method was also proposed by~\citet{Gasnikov-2019-Near} with a global convergence rate of $O(\epsilon^{-2/(3p+1)}\log(1/\epsilon))$ for minimizing a convex function whose the $p^{\textnormal{th}}$-order derivative is Lipschitz continuous. An additional log factor remains between the best known upper bound and the lower bound of $O(\epsilon^{-2/(3p+1)})$~\citep{Arjevani-2019-Oracle}. This gap has been closed by two independent works~\citep{Kovalev-2022-First,Carmon-2022-Optimal} that offer a complementary viewpoint to~\citet{Monteiro-2013-Accelerated,Gasnikov-2019-Near} on how to remove the line-search scheme. Subsequently, the $p^{\textnormal{th}}$-order ANPE framework was extended to a strongly convex setting~\citep{Marques-2022-Variants} and was shown to achieve a global linear rate and a local superlinear rate while the lower bound on deterministic $p^{\textnormal{th}}$-order methods for minimizing a smooth and strongly convex function was established in~\citet{Kornowski-2020-High}. Beyond the setting with Lipschitz continuous $p^{\textnormal{th}}$-order derivatives, these $p^{\textnormal{th}}$-order methods have been adapted to a setting with H\"{o}lder continuous $p^{\textnormal{th}}$-order derivatives~\citep{Grapiglia-2017-Regularized,Grapiglia-2019-Accelerated,Grapiglia-2020-Tensor,Song-2021-Unified,Doikov-2022-Local}. Further settings include smooth nonconvex minimization~\citep{Cartis-2010-Complexity,Cartis-2011-Adaptive-I,Cartis-2011-Adaptive-II,Cartis-2019-Universal,Birgin-2016-Evaluation,Birgin-2017-Worst,Martinez-2017-High} as well as structured nonsmooth minimization~\citep{Bullins-2020-Highly}. There is also a complementary line of research that studies the favorable properties of lower-order methods in the setting of higher-order smoothness~\citep{Nesterov-2021-Auxiliary,Nesterov-2021-Inexact, Nesterov-2021-Superfast}. 

We are aware of various high-order methods obtained via discretization of continuous-time dynamical systems~\citep{Wibisono-2016-Variational,Lin-2022-Control}. In particular,~\citet{Wibisono-2016-Variational} showed that the ACRN method and its $p^{\textnormal{th}}$-order variants can be obtained from implicit discretization of an open-loop system without Hessian-driven damping.~\citet{Lin-2022-Control} have provided a control-theoretic perspective on $p^{\textnormal{th}}$-order ANPE methods by recovering them from implicit discretization of a closed-loop system with Hessian-driven damping. Both of these two works proved the convergence rate of $p^{\textnormal{th}}$-order ACRN and ANPE methods using Lyapunov functions.  

\paragraph{Organization.} In Section~\ref{sec:prelim}, we present the setup for variational inequality (VI) problems and provide definitions for the class of operators and optimality criteria we consider in this paper. In addition, we review the dual extrapolation method. In Section~\ref{sec:DE}, we present our new method, its restarted version, and our main results on the global and local convergence guarantee for monotone and nonmonotone VIs. We also establish a matching lower bound for a broad class of $p^{\textnormal{th}}$-order methods in the monotone setting. In Section~\ref{sec:proof}, we provide the proofs for our results. In Section~\ref{sec:conclu}, we conclude the paper with a discussion on future research directions.

\paragraph{Notation.} We use lower-case letters such as $x$ to denote vectors and upper-case letters such as $X$ to denote tensors.  Let $\br^d$ be a finite-dimensional Euclidean space (the dimension is $d \in \{1, 2, \ldots\}$), endowed with the scalar product $\langle \cdot, \cdot\rangle$. For $x \in \br^d$, we let $\|x\|$ denote its $\ell_2$-norm.  For $X \in \br^{d_1 \times \ldots \times d_p}$, we define 
\begin{equation*}
X[z^1, \cdots, z^p] = \sum_{1 \leq i_j \leq d_j, 1 \leq j \leq p} (X_{i_1, \cdots, i_p})z_{i_1}^1 \cdots z_{i_p}^p, 
\end{equation*}
and $\|X\|_\op = \max_{\|z^i\|=1, 1 \leq j \leq p} X[z^1, \cdots, z^p]$ as well. Fixing $p \geq 0$ and letting $F: \br^d \rightarrow \br^d$ be a continuous and high-order differentiable operator, we define $\nabla^{(p)} F(x)$ as the $p^{\textnormal{th}}$-order derivative at a point $x \in \br^d$ and write $\nabla^{(0)} F = F$. To be more precise, letting $z_1, \ldots, z_k \in \br^d$, we have
\begin{equation*}
\nabla^{(k)} F(x)[z^1, \cdots, z^k] = \sum_{1 \leq i_1, \ldots, i_k \leq d} \left(\frac{\partial F_{i_1}}{\partial x_{i_2} \cdots \partial x_{i_k}}(x)\right) z_{i_1}^1 \cdots z_{i_k}^k. 
\end{equation*}
For a closed and convex set $\XCal \subseteq \br^d$, we let $\proj_\XCal$ be the orthogonal projection onto $\XCal$ and let $\dist(x, \XCal) = \inf_{x' \in \XCal} \|x' - x\|$ denote the distance between $x$ and $\XCal$. Finally, $a = O(b(L, \mu, \epsilon))$ stands for an upper bound $a \leq C \cdot b(L, \mu, \epsilon)$, where $C > 0$ is independent of parameters $L, \mu$ and the tolerance $\epsilon \in (0, 1)$, and $a = \tilde{O}(b(L, \mu, \epsilon))$ indicates the same inequality where $C > 0$ depends on logarithmic factors of $1/\epsilon$.

\section{Preliminaries and Technical Background}\label{sec:prelim}
In this section, we present the basic formulation of variational inequality (VI) problems and provide definitions for the class of operators and optimality criteria considered in this paper. We further give a brief overview of Nesterov's dual extrapolation concept from which our new method originates.  

\subsection{Variational inequality problem}
The regularity conditions that we consider for $F: \br^d \rightarrow \br^d$ are as follows.
\begin{definition}\label{def:smooth}
$F$ is \emph{$k^{\textnormal{th}}$-order $L$-smooth} if 
\begin{equation*}
\|\nabla^{(k)} F(x) -  \nabla^{(k)} F(x')\|_\op \leq L\|x - x'\|,
\end{equation*}
for all $x, x'$.
\end{definition} 
\begin{definition}\label{def:monotone}
We have the following characterizations: 
\begin{itemize}
\item $F$ is monotone if $\langle F(x) - F(x'), x - x'\rangle \geq 0$ for all $x, x'$. 
\item $F$ is $k^{\textnormal{th}}$-order $\mu$-uniformly monotone if $\langle F(x) - F(x'), x - x'\rangle \geq \mu \|x - x'\|^{k+1}$ for all $x, x'$. 
\item $F$ is $\mu$-strongly monotone if $\langle F(x) - F(x'), x - x'\rangle \geq \mu \|x - x'\|^2$ for all $x, x'$. 
\end{itemize}
\end{definition}
With the definitions in mind, we state the assumptions that impose in addition to \textbf{A1} and \textbf{A2} in order to define highly smooth VI problems. 
\begin{assumption}\label{Assumption:smooth}
We assume that (i) $F: \br^d \rightarrow \br^d$ is $(p-1)^{\textnormal{th}}$-order $L$-smooth, and (ii) $\XCal$ is convex and bounded with a diameter $D = \max_{x, x' \in \XCal} \|x - x'\| > 0$. 
\end{assumption}
The convergence of derivative-based optimization methods to a weak solution $x^\star \in \XCal$ depends on properties of $F$ near this point, and in particular some form of smoothness condition is needed. As for the boundedness condition for $\XCal$, it is standard in the VI literature~\citep{Facchinei-2007-Finite}. This condition not only guarantees the validity of the most natural optimality criterion in the monotone setting---the gap function~\citep{Nemirovski-2004-Prox,Nesterov-2007-Dual}---but additionally it is satisfied in real application problems~\citep{Facchinei-2007-Finite}. On the other hand, there is another line of work focusing on relaxing the boundedness condition via appeal to other notions of approximate solution~\citep{Monteiro-2010-Complexity,Monteiro-2011-Complexity,Monteiro-2012-Iteration,Chen-2017-Accelerated}. For simplicity, we retain the boundedness condition and leave the analysis for cases with unbounded constraint sets to future work. 

\paragraph{Monotone setting.}  For some of our results we focus on operators $F$ that are monotone in addition to Assumption~\ref{Assumption:smooth}. Under monotonicity, it is well known that any $\epsilon$-strong solution is an $\epsilon$-weak solution but the reverse does not hold true in general. Accordingly, we formally define $\hat{x} \in \XCal$ as an $\epsilon$-weak solution or an $\epsilon$-strong solution as follows:
\begin{equation*}
\begin{array}{lll}
\textbf{($\epsilon$-weak solution)} & \langle F(x), \hat{x} - x\rangle \leq \epsilon, & \textnormal{for all } x \in \XCal, \\
\textbf{($\epsilon$-strong solution)} & \langle F(\hat{x}), \hat{x} - x\rangle \leq \epsilon, & \textnormal{for all } x \in \XCal. 
\end{array}
\end{equation*}
These definitions motivate the use of a gap function, $\textsc{gap}(\cdot): \XCal \rightarrow \br_+$, defined by 
\begin{equation}\label{eq:cc-gap}
\textsc{gap}(\hat{x}) = \sup_{x \in \XCal} \ \langle F(x),  \hat{x} - x\rangle, 
\end{equation}
to measure the optimality of a point $\hat{x} \in \XCal$ that is output by various iterative solution methods; see~\citep[e.g.,][]{Tseng-2000-Modified,Nemirovski-2004-Prox,Nesterov-2007-Dual,Mokhtari-2020-Convergence}. The boundedness of $\XCal$ and the existence of a strong solution guarantee that the gap function is well defined. Formally, we have
\begin{definition}\label{def:cc-gap}
A point $\hat{\x} \in \XCal$ is an \emph{$\epsilon$-weak solution} to the monotone VI that corresponds to $F: \br^d \rightarrow \br^d$ and $\XCal \subseteq \br^d$ if we have $\textsc{gap}(\hat{x}) \leq \epsilon$. If $\epsilon = 0$, then $\hat{x} \in \XCal$ is a \emph{weak solution}.  
\end{definition}
In the strongly monotone setting, we let $\mu > 0$ denote the modulus of strong monotonicity for $F$. Under Assumption~\ref{Assumption:smooth}, we define $\kappa := L/\mu$ as the \textit{condition number} of $F$. It is worth mentioning that the condition number quantifies the difficulty of solving the optimization problem~\citep{Nesterov-2018-Lectures} and appears in the iteration complexity bound of derivative-based methods for optimizing a smooth and strongly convex function. Accordingly, the VI that corresponds to $F$ and $\XCal$ is more computationally challenging as $\kappa > 0$ increases. 

\paragraph{Structured nonmonotone setting.} We study the case where $F$ is nonmonotone but satisfies the Minty condition.  Imposing such a condition is crucial since the smoothness of $F$ is not sufficient to guarantee that the problem is computationally tractable. This was shown by~\citet{Daskalakis-2021-Complexity} who established that deciding whether an approximate min-max solution exists is NP hard in smooth and nonconvex-nonconcave min-max optimization (which is a special instance of nonmonotone VIs). 

A line of recent works has shown that the nonmonotone VI problem satisfying the Minty condition is computationally tractable~\citep{Solodov-1999-New,Dang-2015-Convergence,Iusem-2017-Extragradient,Kannan-2019-Optimal,Song-2020-Optimistic,Liu-2021-First,Diakonikolas-2021-Efficient}. We thus make the following formal definition.
\begin{definition}\label{def:Minty}
The VI corresponding to $F: \br^d \rightarrow \br^d$ and $\XCal \subseteq \br^d$ satisfies the \emph{Minty condition} if there exists a point $x^\star \in \XCal$ such that $\langle F(x), x- x^\star\rangle \geq 0$ for all $x \in \XCal$. 
\end{definition}
We make some comments on the Minty condition. First, this condition simply assumes the existence of at least one weak solution. Second,~\citet[Theorem~3.1]{Harker-1990-Finite} guarantees that there is at least one strong solution since $F$ is continuous and $\XCal$ is closed and bounded. However, the set of weak solutions is only a subset of the set of strong solutions if $F$ is not necessarily monotone, and the weak solution might not exist. From this perspective, the Minty condition gives a favorable structure. Furthermore, the Minty condition is weaker than generalized monotone assumptions~\citep{Dang-2015-Convergence,Iusem-2017-Extragradient,Kannan-2019-Optimal} that imply that the computation of an $\epsilon$-strong solution of nonmonotone VIs is tractable for first-order methods. Finally, we say the VI satisfies the \emph{$p^{\textnormal{th}}$-order $\mu$-uniform Minty condition} if there exists a point $x^\star \in \XCal$ such that $\langle F(x), x- x^\star\rangle \geq \mu\|x - x^\star\|^{p+1}$ for all $x \in \XCal$, and the VI satisfies the \emph{$\mu$-strong Minty condition}~\citep{Song-2020-Optimistic} if there exists a point $x^\star \in \XCal$ such that $\langle F(x), x- x^\star\rangle \geq \mu\|x - x^\star\|^2$ for all $x \in \XCal$. 

Accordingly, we define the \emph{residue function} $\textsc{res}(\cdot): \XCal \rightarrow \br_+$ given by 
\begin{equation}\label{eq:cc-residue}
\textsc{res}(\hat{x}) = \sup_{x \in \XCal} \ \langle F(\hat{x}),  \hat{x} - x\rangle, 
\end{equation}
which measures the optimality of a point $\hat{x} \in \XCal$ achieved by iterative solution methods; see~\citep[e.g.,][]{Dang-2015-Convergence,Iusem-2017-Extragradient,Kannan-2019-Optimal,Song-2020-Optimistic}. It is worth noting that the boundedness of $\XCal$ and the continuity of $F$ guarantee that the residue function is well defined. Formally, we have
\begin{definition}\label{def:cc-res}
A point $\hat{\x} \in \XCal$ is an \emph{$\epsilon$-strong solution} to the nonmonotone VI corresponding to $F: \br^d \rightarrow \br^d$ and $\XCal \subseteq \br^d$ if we have $\textsc{res}(\hat{x}) \leq \epsilon$. If $\epsilon = 0$, then $\hat{x} \in \XCal$ is a \emph{strong solution}.  
\end{definition}
There are many application problems that can be formulated as nonmonotone VIs satisfying the Minty condition, such as competitive exchange economies~\citep{Brighi-2002-Characterizations} and product pricing~\citep{Choi-1990-Product,Gallego-2014-Dynamic,Ewerhart-2014-Cournot}. In addition, the Minty condition restricted to nonconvex optimization was adopted for analyzing the convergence of stochastic gradient descent for deep learning~\citep{Li-2017-Convergence} and it has found real-world applications~\citep{Kleinberg-2018-Alternative}. 
\paragraph{Comments on weak versus strong solutions.} First, the montonicity assumption is assumed such that the \textit{averaged iterates} make sense and we have proved that the averaged iterates converge to an $\epsilon$-weak solution with a faster convergence rate of $O(\epsilon^{-2/(p+1)})$ in this setting (see Theorem~\ref{Thm:monotone-global}). Such a bound is stronger than that for convergence rate of \textit{best iterates} to an $\epsilon$-strong solution under only the Minty condition (see Theorem~\ref{Thm:nonmonotone-global}). Further, if we impose the monotonicity assumption, we conjecture that the rate of convergence to an $\epsilon$-strong solution can be improved from $O(\epsilon^{-2/p})$ to $O(\epsilon^{-2/(p+1)})$. Such a result has been achieved for the case of $p=1$~\citep{Diakonikolas-2020-Halpern}. However, it is worth mentioning that the first-order method in~\citet{Diakonikolas-2020-Halpern} is different from the first-order extragradient method and the first-order dual extrapolation method which are known to achieve an optimal convergence to an $\epsilon$-weak solution. It remains unclear how to design a high-order generalization of such new Halpern iteration methods. Finally, the complexity bound of $O(\epsilon^{-2/(p+1)})$ can not be extended beyond the monotone setting if only the Minty condition holds. Indeed, the key ingredient for proving the complexity bound of $O(\epsilon^{-2/(p+1)})$ is the use of averaged iterates in our new method. Such an averaging technique is known to be crucial for the monotone setting~\citep{Magnanti-1997-Averaging} but is not known to be valid when only the Minty condition holds. In addition, the fast convergence of Halpern iteration in~\citet{Diakonikolas-2020-Halpern} for achieving an $\epsilon$-strong solution heavily relies on the monotonicity assumption and does not extend to the setting when only the Minty condition holds. We would be very surprised if the optimal complexity bound for the monotone setting (note that we have established the matching lower bound) can be achieved for the setting when only the Minty condition holds. Even for the case of $p=1$, we are not aware of any relevant supporting evidence. Further exploration of this topic is beyond the scope of our paper. 

\paragraph{Comments on Euclidean versus non-Euclidean settings.} The non-Euclidean generalization of the first-order dual extrapolation method has been shown to outperform the original method in various application problem (e.g., the case where $\XCal$ is a simplex)~\citep{Nesterov-2007-Dual}. It remains a possibility that such a benefit also occurs for the case of $p \geq 2$ and thus it seems promising to study the high-order dual extrapolation method in non-Euclidean settings. In fact, we can follow the approach from~\citet{Adil-2022-Optimal} and extend our methods to the non-Euclidean setting using Bregman divergence. However, we can not say much about the superiority of high-order dual extrapolation methods in the non-Euclidean setting since the solution of the subproblem will become much more involved. This is different from the first-order case where each subproblem has a closed-form solution even in non-Euclidean settings. This is also a intriguing topic but again beyond the scope of our paper. 

\subsection{Nesterov's dual extrapolation method}\label{subsec:NDE}
Nesterov’s dual extrapolation method~\citep{Nesterov-2007-Dual} was shown to be an optimal first-order method for computing the weak solution of the VI when $F$ is zeroth-order $L$-smooth and monotone~\citep{Ouyang-2021-Lower}. We recall the basic formulation in our setting of a VI defined via an operator $F: \br^d \rightarrow \br^d$ and a closed, convex and bounded set $\XCal \subseteq \br^d$. Starting with the initial points $x_0 \in \XCal$ and $s_0 = 0 \in \br^d$, the $k^{\textnormal{th}}$ iteration of the scheme is given by ($k \geq 1$): 
\begin{equation*}
\begin{array}{rl}
& \textnormal{Find } v_k \in \XCal \textnormal{ s.t. } v_k = \argmax_{v \in \XCal} \langle s_{k-1}, v - x_0\rangle - \tfrac{\beta}{2}\|v - x_0\|^2, \\
& \textnormal{Find } x_k \in \XCal \textnormal{ s.t. } \langle F(v_k) + \beta(x_k - v_k), x - x_k\rangle \geq 0 \textnormal{ for all } x \in \XCal, \\
& s_k = s_{k-1} - \lambda F(x_k). 
\end{array}
\end{equation*}
This method can be viewed as an instance of the celebrated extragradient method in the dual space (we refer to $s \in \br^d$ as the dual variable). Indeed, the rule which transforms a point $s_{k-1}$ into the next point $s_k$ at the $k^{\textnormal{th}}$ iteration is called a dual extrapolation step.~\citet[Theorem~2]{Nesterov-2007-Dual} showed that the dual extrapolation method, with $\beta = L$ and $\lambda = 1$, generates a sequence $\{x_k\}_{k \geq 0} \subseteq \XCal$ satisfying the condition that the average iterate, $\tilde{x}_k = \frac{1}{k+1}\sum_{i=0}^k x_i$, is an $\epsilon$-weak solution after at most $O(\epsilon^{-1})$ iterations. Here, $L > 0$ is the Lipschitz constant of $F$. 

Nesterov also considered the setting where $F$ is monotone and first-order $L$-smooth and proposed a second-order dual extrapolation method for computing the weak solution of the VI~\citep{Nesterov-2006-Constrained}. Starting with the initial points $x_0 \in \XCal$ and $s_0 = 0 \in \br^d$, the $k^{\textnormal{th}}$ iteration of the scheme is given by ($k \geq 1$):
\begin{equation*}
\begin{array}{rl}
& \textnormal{Find } v_k \in \XCal \textnormal{ s.t. } v_k = \argmax_{v \in \XCal} \langle s_{k-1}, v - x_0\rangle - \tfrac{\beta}{3}\|v - x_0\|^3, \\
& \textnormal{Find } x_k \in \XCal \textnormal{ s.t. } \langle F_{v_k}^1(x_k) + \tfrac{M}{2}\|x_k - v_k\|(x_k - v_k), x - x_k\rangle \geq 0 \textnormal{ for all } x \in \XCal, \\
& s_k = s_{k-1} - \lambda F(x_k),
\end{array}
\end{equation*}
where $F^1_v(\cdot): \br^d \rightarrow \br^d$ is defined as a first-order Taylor expansion of $F$ at a point $v \in \XCal$:
\begin{equation*}
F^1_v(x) = F(v) + \nabla F(v)(x - v). 
\end{equation*}
This scheme is based on the dual extrapolation step which combines a different regularization with a first-order Taylor expansion of $F$. This makes sense since we have zeroth-order and first-order derivative information available and hope to use both of them to accelerate convergence.  Similar ideas have been studied for convex optimization~\citep{Nesterov-2006-Cubic}, leading to a simple second-order method with a faster global rate~\citep{Nesterov-2008-Accelerating} than optimal first-order methods~\citep{Nesterov-1983-Method}. Unfortunately, the second-order dual extrapolation method with $\beta = 6L$, $M = 5L$ and $\lambda = 1$ is only guaranteed to achieve an iteration complexity of $O(\epsilon^{-1})$~\citep[Theorem~4]{Nesterov-2006-Constrained}.

\section{A Regularized High-Order Model and Algorithm}\label{sec:DE}
In this section, we present our algorithmic derivation of \textsf{Perseus} and provide a theoretical convergence guarantee for the method. We provide intuition into why \textsf{Perseus} and its restarted version yield the fast rates of convergence for VI problems. We present a complete treatment of the global and local convergence of \textsf{Perseus} and several restarted versions of our method for both the monotone setting and the nonmonotone setting under the Minty condition. 

\subsection{Algorithmic scheme}
We present our $p^{\textnormal{th}}$-order method---\textsf{Perseus}($p$, $x_0$, $L$, $T$, \textsf{opt})---in Algorithm~\ref{Algorithm:DE}.  Here $p \in \{1, 2, \ldots\}$ is the order, $x_0 \in \XCal$ is an initial point, $L > 0$ is a Lipschitz constant for $(p-1)^{\textnormal{th}}$-order smoothness, $T$ is the maximum iteration number and $\textsf{opt} \in \{0, 1, 2\}$ is the type of output. Our method is a generalization of the dual extrapolation method~\citep{Nesterov-2007-Dual} from first order to general $p^{\textnormal{th}}$ order.

The novelty of our method lies in an adaptive strategy used for updating $\lambda_{k+1}$ (see \textbf{Step 4}). This modification is simple yet important. It is the key for obtaining a global rate of $O(\epsilon^{-2/(p+1)})$ (monotone) and that of $O(\epsilon^{-2/p})$ (nonmonotone with the Minty condition) under Assumption~\ref{Assumption:smooth}. Focusing on the case of $p=2$ and the monotone setting, our results improve on the best existing global convergence rates of $O(\epsilon^{-1})$~\citep{Nesterov-2006-Constrained} and that of $O(\epsilon^{-2/3}\log\log(1/\epsilon))$~\citep{Monteiro-2012-Iteration} under Assumption~\ref{Assumption:smooth}, while not sacrificing algorithmic simplicity. In addition, our methods allow the subproblem to be solved inexactly, and we give options for choosing the type of outputs under different assumptions. 
\begin{algorithm}[!t]
\begin{algorithmic}\caption{\textsf{Perseus}($p$, $x_0$, $L$, $T$, $\textsf{opt}$)}\label{Algorithm:DE}
\STATE \textbf{Input:} order $p$, initial point $x_0 \in \XCal$, parameter $L$, iteration number $T$ and $\textsf{opt} \in \{0, 1, 2\}$. 
\STATE \textbf{Initialization:} set $s_0 = 0_d \in \br^d$.
\FOR{$k = 0, 1, 2, \ldots, T$} 
\STATE \textbf{STEP 1:} If $x_k \in \XCal$ is a solution of the VI, then \textbf{stop}. 
\STATE \textbf{STEP 2:} Compute $v_{k+1} = \argmax_{v \in \XCal} \{\langle s_k, v - x_0\rangle - \frac{1}{2}\|v - x_0\|^2\}$. 
\STATE \textbf{STEP 3:} Compute $x_{k+1} \in \XCal$ such that Eq.~\eqref{condition:approximation} holds true. 
\STATE \textbf{STEP 4:} Compute $\lambda_{k+1} > 0$ such that $\frac{1}{20p-8} \leq \tfrac{\lambda_{k+1} L \|x_{k+1} - v_{k+1}\|^{p-1}}{p!} \leq \frac{1}{10p+2}$. 
\STATE \textbf{STEP 5:} Compute $s_{k+1} = s_k - \lambda_{k+1} F(x_{k+1})$. 
\ENDFOR
\STATE \textbf{Output:} $\hat{x} = \left\{\begin{array}{cl}
\tilde{x}_T = \frac{1}{\sum_{k=1}^T \lambda_k}\sum_{k=1}^T \lambda_k x_k, & \textnormal{if } \textsf{opt} = 0, \\
x_T, & \textnormal{else if } \textsf{opt} = 1, \\ 
x_{k_T} \textnormal{ for } k_T = \argmin_{1 \leq k \leq T} \|x_k - v_k\|, & \textnormal{else if } \textsf{opt} = 2.
\end{array}\right. $
\end{algorithmic}
\end{algorithm}
\paragraph{Comments on inexact subproblem solving.} We remark that \textbf{Step 3} involves computing an approximate strong solution to the VI where we define the operator $F_{v_{k+1}}(x)$ as the sum of a high-order polynominal and a regularization term. Indeed, we have\footnote{For ease of presentation, we choose the factor of 5 here. It is worth noting that other large coefficients also suffice to achieve the same global convergence rate guarantee. }
\begin{eqnarray*}
\lefteqn{F_{v_{k+1}}(x) = F(v_{k+1}) + \langle \nabla F(v_{k+1}), x-v_{k+1}\rangle} \\ 
& & + \ldots + \tfrac{1}{(p-1)!}\nabla^{(p-1)} F(v_{k+1})[x-v_{k+1}]^{p-1} + \tfrac{5L}{(p-1)!}\|x - v_{k+1}\|^{p-1}(x - v_{k+1}), 
\end{eqnarray*}
where we write the VI of interest in the subproblem as follows:
\begin{equation}\label{def:subproblem}
\textnormal{Find } x_{k+1} \in \XCal \textnormal{ such that } \langle F_{v_{k+1}}(x_{k+1}), x - x_{k+1} \rangle \geq 0 \textnormal{ for all } x \in \XCal. 
\end{equation}
Since $F_{v_{k+1}}$ is continuous and $\XCal$ is convex and bounded,~\citet[Theorem~3.1]{Harker-1990-Finite} guarantees that a strong solution to the VI in Eq.~\eqref{def:subproblem} exists and the problem of finding an approximate strong solution is well defined. 

In the monotone setting, we can prove that the $p^\textnormal{th}$-order regularization subproblem in Eq.~\eqref{def:subproblem} is monotone (in fact, it is relatively strongly monotone) if the original VI is $p^\textnormal{th}$-order $L$-smooth and monotone. Indeed, the VI with $F$ is monotone if and only if the symmetric part of the Jacobian matrix $\nabla F(x)$ is \textit{positive semidefinite} for all $x \in \br^d$~\citep[Proposition~12.3]{Rockafellar-2009-Variational}. That is to say, 
\begin{equation*}
\tfrac{1}{2}(\nabla F(x) + \nabla F(x)^\top) \succeq 0_{d \times d}, \quad \textnormal{for all } x \in \br^d. 
\end{equation*}
For the case of $p=1$, we have $\nabla F_{v_{k+1}}(x) = 5L \cdot I_{d \times d} \succeq 0_{d \times d}$ for all $x \in \br^d$ where $I_{d \times d} \in \br^{d \times d}$ is an identity matrix. Thus, the VI in Eq.~\eqref{def:subproblem} is $5L$-strongly monotone. For the case of $p \geq 2$, we have
\begin{eqnarray*}
\lefteqn{\nabla F_{v_{k+1}}(x) = \nabla F(v_{k+1}) + \ldots + \tfrac{1}{(p-2)!}\nabla^{(p-1)} F(v_{k+1})[x-v_{k+1}]^{p-2}} \\
& & + \tfrac{5L}{(p-1)!}\|x - v_{k+1}\|^{p-1}I_{d \times d} + \tfrac{5L}{(p-2)!}\|x - v_{k+1}\|^{p-2}(x - v_{k+1})(x - v_{k+1})^\top. \end{eqnarray*}
Since the original VI is $p^\textnormal{th}$-order $L$-smooth, we obtain from~\citet[Eq.~(7)]{Jiang-2022-Generalized} that 
\begin{equation*}
\|\nabla F(x) - (\nabla F(v_{k+1}) + \ldots + \tfrac{1}{(p-2)!}\nabla^{(p-1)} F(v_{k+1})[x-v_{k+1}]^{p-2})\|_\op \leq \tfrac{L}{(p-1)!}\|x - v_{k+1}\|^{p-1}. 
\end{equation*}
This implies that 
\begin{eqnarray*}
\lefteqn{\tfrac{1}{2}(\nabla F_{v_{k+1}}(x) + \nabla F_{v_{k+1}}(x)^\top) \succeq \tfrac{1}{2}(\nabla F(x) + \nabla F(x)^\top)} \\
& & + \tfrac{4L}{(p-1)!}\|x - v_{k+1}\|^{p-1}I_{d \times d} + \tfrac{5L}{(p-2)!}\|x - v_{k+1}\|^{p-2}(x - v_{k+1})(x - v_{k+1})^\top \\
& \succeq & \tfrac{4L}{(p-1)!}(\|x - v_{k+1}\|^{p-1}I_{d \times d} + \|x - v_{k+1}\|^{p-2}(x - v_{k+1})(x - v_{k+1})^\top), 
\end{eqnarray*}
where the second inequality holds since the original VI is monotone. Thus, the VI in Eq.~\eqref{def:subproblem} is monotone and $4L$-relatively strongly monotone with respect to the reference function $h(x) = \tfrac{1}{p!}\|x - v_{k+1}\|^p$ (see~\citet{Nesterov-2021-Implementable} for the precise definition). Putting these pieces together yields the desired result. From a computational viewpoint, we can use the generalized mirror-prox method in~\citet{Titov-2022-Some} to compute $x_{k+1} \in \XCal$ satisfying the following approximation condition: 
\begin{equation}\label{condition:approximation}
\sup_{x \in \XCal} \ \langle F_{v_{k+1}}(x_{k+1}), x_{k+1} - x\rangle \leq \tfrac{L}{p!}\|x_{k+1} - v_{k+1}\|^{p+1}. 
\end{equation}
Thus, the solution of the subproblem in our framework is computationally tractable for the monotone setting.  Other efficient solvers have been developed for the case of $p = 2$ and $\XCal = \br^d$ in the context of optimization~\citep{Grapiglia-2021-Inexact} and minimax optimization~\citep{Huang-2022-Cubic,Adil-2022-Optimal,Lin-2022-Explicit} and were shown to be efficient in practice. However, it is worth mentioning that the subproblem solution method in our approach is different from that in existing line-search-based methods~\citep{Bullins-2022-Higher,Lin-2023-Monotone,Jiang-2022-Generalized}, and their per-iteration computational costs can not be directly compared. It remains an open challenge to develop a systematic benchmarking for these methods.

In the nonmonotone setting, the VI in Eq.~\eqref{def:subproblem} is not necessarily monotone and computing a solution $x_{k+1}$ satisfying Eq.~\eqref{condition:approximation} is intractable in general~\citep{Daskalakis-2021-Complexity}. However, $F_{v_{k+1}}$ is defined as the sum of a polynomial and a regularization term, and this special structure might lend itself to efficient numerical methods. For example, we consider the optimization setting where $F = \nabla f$ for a nonconvex function $f: \br^d \rightarrow \br$ with a Lipschitz second-order derivative, $\XCal = \br^d$ and $p = 2$. Solving the VI in Eq.~\eqref{def:subproblem} is equivalent to solving cubic regularization subproblems in unconstrained optimization: finding a global solution of the regularized polynomial in the following form of 
\begin{equation*}
\langle \nabla f(v_{k+1}), x - v_{k+1}\rangle + \tfrac{1}{2}\langle x - v_{k+1},  \nabla^2 f(v_{k+1})(x - v_{k+1})\rangle + \tfrac{L}{3}\|x - v_{k+1}\|^3. 
\end{equation*}
The above optimization problem is nonconvex but can be solved approximately in a provably efficient manner. Examples of cubic regularization solvers include some generalized conjugate gradient methods with the Lanczos process~\citep{Gould-1999-Solving,Gould-2010-Solving} and a simple variant of gradient descent~\citep{Carmon-2019-Gradient}. A recent textbook of~\citet{Cartis-2022-Evaluation} provides a detailed discussion of the existing techniques. The generalization of these techniques to handle the VI in Eq.~\eqref{def:subproblem} is challenging, however, and beyond the scope of this paper. 
\begin{algorithm}[!t]
\begin{algorithmic}\caption{\textsf{Perseus-restart}($p$, $x_0$, $L$, $\sigma$, $D$, $T$,  $\textsf{opt}$)}\label{Algorithm:restart}
\STATE \textbf{Input:} order $p$, initial point $x_0 \in \XCal$, parameters $L, \sigma, D$, iteration number $T$ and $\textsf{opt} \in \{0, 1\}$. 
\STATE \textbf{Initialization:} set $T_{\textnormal{inner}} = \left\{
\begin{array}{cl}
\lceil(\tfrac{2^{p+1}(5p-2)}{p!}\tfrac{LD^{p-1}}{\sigma})^{\frac{2}{p+1}}\rceil, & \textnormal{if } \textsf{opt} = 0, \\
1, & \textnormal{else if } \textsf{opt} = 1.
\end{array}\right.$
\FOR{$k = 0, 1, 2, \ldots, T$}
\STATE \textbf{STEP 1:} If $x_k \in \XCal$ is a solution of the VI, then \textbf{stop}. 
\STATE \textbf{STEP 2:} Compute $x_{k+1} = \textsf{Perseus}(p, x_k, L, T_{\textnormal{inner}}, \textsf{opt})$. 
\ENDFOR
\STATE \textbf{Output:} $x_{T+1}$. 
\end{algorithmic}
\end{algorithm}
\paragraph{Comments on adaptive strategies.} Our adaptive strategy for updating $\lambda_{k+1}$ was inspired by an in-depth consideration of the reason a nontrivial binary search procedure is needed in existing $p^{\textnormal{th}}$-order methods. These methods compute a pair, $\lambda_{k+1} > 0, x_{k+1} \in \XCal$, that (approximately) solve the $x$-subproblem that contains $\lambda$ and the $\lambda$-subproblem that contains $x$. In particular, the conditions can be written as follows:
\begin{equation*}
\begin{array}{rl}
& \alpha_{-} \leq \tfrac{\lambda_{k+1} L \|x_{k+1} - v_{k+1}\|^{p-1}}{p!} \leq \alpha_+ \textnormal{ for proper choices of } \alpha_- \textnormal{ and } \alpha_+, \\ 
& \langle F_{v_{k+1}}(x_{k+1}) + \tfrac{1}{\lambda_{k+1}}(x_{k+1} - v_{k+1}), x - x_{k+1} \rangle \geq 0 \textnormal{ for all } x \in \XCal, 
\end{array}
\end{equation*}
where 
\begin{equation}\label{def:approximation}
F_v(x) = F(v) + \langle \nabla F(v), x-v\rangle + \ldots + \tfrac{1}{(p-1)!}\nabla^{(p-1)} F(v)[x-v]^{p-1} + \tfrac{L}{(p-1)!}\|x - v\|^{p-1}(x - v). 
\end{equation}
A key observation is that there can be some $x$-subproblems that do not need to refer to $\lambda$; e.g., the one employed in Algorithm~\ref{Algorithm:DE}.  Indeed, we compute $x_{k+1} \in \XCal$ that approximately satisfies the following condition: 
\begin{equation*}
\langle F_{v_{k+1}}(x_{k+1}), x - x_{k+1} \rangle \geq 0 \textnormal{ for all } x \in \XCal. 
\end{equation*}
It suffices to return $x_{k+1} \in \XCal$ with a sufficiently good quality to give us $\lambda_{k+1} > 0$ using a simple update rule. Intuitively, such an adaptive strategy makes sense since $\lambda_{k+1}$ serves as the stepsize in the dual space and we need to be aggressive as the iterate $x_{k+1}$ approaches the set of optimal solutions to the VI. Meanwhile, the quantity $\|x_{k+1} - v_{k+1}\|$ can be used to measure the distance between $x_{k+1}$ and an optimal solution, and the order $p \in \{1, 2, 3,\ldots\}$ quantifies the relationship between the closeness and the exploitation of high-order derivative information. In summary, $\lambda_{k+1}$ becomes larger for a better iterate $x_{k+1} \in \XCal$ and such a choice leads to a faster global rate of convergence. 

\paragraph{Restart version of \textsf{Perseus}.} We summarize the restarted versions of our $p^{\textnormal{th}}$-order method in Algorithm~\ref{Algorithm:restart}. These methods, which we refer to as \textsf{Perseus-restart}($p$, $x_0$, $L$, $\sigma$, $D$, $T$,  $\textsf{opt}$), combine Algorithm~\ref{Algorithm:DE} with two restart strategies;~\citep[c.f.][]{Nemirovski-1985-Optimal,Nesterov-2013-Gradient,Donoghue-2015-Adaptive,Nesterov-2018-Lectures}. 

Restart schemes stop an algorithm when a criterion is satisfied and then restart the algorithm with a new input. Originally studied in the setting of momentum-based methods, restarting has been recognized as an important tool for designing linearly convergent algorithms when the objective function is strongly/uniformly convex~\citep{Nemirovski-1985-Optimal,Nesterov-2013-Gradient,Ghadimi-2013-Optimal} or has some other structures~\citep{Freund-2018-New,Necoara-2019-Linear,Renegar-2022-Simple}. Note that strong monotonicity is a generalization of such regularity conditions. As such, it is natural to consider a restarted version of our method, hoping to achieve linear convergence. Accordingly, at each iteration of Algorithm~\ref{Algorithm:restart}, we use $x_{k+1} = \textsf{Perseus}(p, x_k, L, t, \textsf{opt})$ as a subroutine. In other words, we simply restart \textsf{Perseus} every $t \geq 1$ iterations and take advantage of average iterates or best iterates to generate $x_{k+1}$ from $x_k$. In addition, it is worth mentioning that the choice of $t$ can be specialized to different settings and/or different type of convergence guarantees. In particular, we set $\textsf{opt} = 0$ for the uniformly monotone setting and $\textsf{opt} = 1$ for the strongly monotone setting. 

In the context of VI, the restarting strategies have been used to extend high-order extragradient methods~\citep{Bullins-2022-Higher,Adil-2022-Optimal} from the monotone setting to the strongly monotone setting~\citep{Ostroukhov-2020-Tensor,Huang-2022-Approximation}.  Several papers also focus on the investigation of adaptive restart schemes that speed up the convergence of classical first-order methods~\citep{Giselsson-2014-Monotonicity,Donoghue-2015-Adaptive} and provide theoretical guarantees in a general setting where the objective function is smooth and has H\"{o}lderian growth~\citep{Roulet-2017-Sharpness,Fercoq-2019-Adaptive}. A drawback of these schemes is that they rely on knowing appropriately accurate approximations of problem parameters.  The same issue arises for our method, given that Algorithm~\ref{Algorithm:restart} needs to choose $T_{\textnormal{inner}} \geq 1$. In the optimization setting, recent work by~\citet{Renegar-2022-Simple} shows how to alleviate this problem via a simple restart scheme that makes no attempt to learn parameter values and only requires the information that is readily available in practice.  It is an interesting open question as to whether such a scheme can be found in the VI setting for \textsf{Perseus}. 

\subsection{Main results}
We provide our main results on the convergence rate for Algorithm~\ref{Algorithm:DE} and~\ref{Algorithm:restart} in terms of the number of calls of the subproblem solvers. Note that Assumption~\ref{Assumption:smooth} will be made throughout and we impose the Minty condition (see Definition~\ref{def:Minty}) for the nonmonotone setting. 

\paragraph{Monotone setting.} The following theorems give us the global convergence rate of Algorithm~\ref{Algorithm:DE} and~\ref{Algorithm:restart} for smooth and (uniformly/strongly) monotone VIs. 
\begin{theorem}\label{Thm:monotone-global}
Suppose that Assumption~\ref{Assumption:smooth} holds and $F: \br^d \rightarrow \br^d$ is monotone and let $\epsilon \in (0, 1)$. The required number of iterations is 
\begin{equation*}
T = O\left(\left(\frac{LD^{p+1}}{\epsilon}\right)^{\frac{2}{p+1}}\right), 
\end{equation*}
where $\hat{x} = \textsf{Perseus}(p, x_0, L, T, 0)$ satisfies $\textsc{gap}(\hat{x}) \leq \epsilon$ and the total number of calls of the subproblem solvers is equal to $T$. Here, $p \in \{1, 2, \ldots\}$ is an order, $L > 0$ is a Lipschitz constant for $(p-1)^{\textnormal{th}}$-order smoothness of $F$ and $D > 0$ is the diameter of $\XCal$. 
\end{theorem}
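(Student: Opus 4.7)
The strategy is to combine a Nesterov-style dual-extrapolation telescope with a careful accounting of the regularized Taylor-model error, engineered so that the adaptive window on $\lambda_k$ imposed in \textbf{Step 4} forces an exact cancellation; H\"older's inequality then converts an absolute $\ell_2$-type bound on $\|x_k-v_k\|$ into a lower bound on $\sum_k \lambda_k$ and hence the claimed rate.

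For the telescope, consider the auxiliary potential $\psi_{k+1}(z) := \tfrac{1}{2}\|z-x_0\|^2 - \langle s_k, z-x_0\rangle$, which is $1$-strongly convex and minimized over $\XCal$ at $v_{k+1}$. Since $\psi_{k+1} = \psi_k + \lambda_k \langle F(x_k), \cdot - x_0\rangle$, strong convexity of $\psi_k$ gives $\psi_k(v_{k+1}) \geq \psi_k(v_k) + \tfrac{1}{2}\|v_{k+1}-v_k\|^2$, and hence $\psi_{k+1}(v_{k+1}) \geq \psi_k(v_k) + \tfrac{1}{2}\|v_{k+1}-v_k\|^2 + \lambda_k\langle F(x_k), v_{k+1}-x_0\rangle$. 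Iterating from $k=1$ to $T$ (with $\psi_1(v_1)=0$) and using $\psi_{T+1}(z) \geq \psi_{T+1}(v_{T+1})$ yields, for every $z \in \XCal$,
$$
\sum_{k=1}^T \lambda_k \langle F(x_k), v_{k+1} - z\rangle + \tfrac{1}{2}\sum_{k=1}^T \|v_{k+1}-v_k\|^2 \;\leq\; \tfrac{1}{2}\|z-x_0\|^2 \;\leq\; \tfrac{D^2}{2}.
$$
Splitting $\langle F(x_k), x_k-z\rangle = \langle F(x_k), v_{k+1}-z\rangle + \langle F(x_k), x_k-v_{k+1}\rangle$ then reduces the proof to showing that the residual
$$
R \;:=\; \sum_k \lambda_k \langle F(x_k), x_k - v_{k+1}\rangle - \tfrac{1}{2}\sum_k \|v_{k+1}-v_k\|^2
$$
is non-positive, and in fact satisfies $R \leq -C_p \sum_k \|x_k-v_k\|^2$ for some $C_p>0$.

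For the residual bound, I would further decompose $F(x_k) = F_{v_k}(x_k) + (F(x_k)-F_{v_k}(x_k))$ and $x_k-v_{k+1} = (x_k-v_k) + (v_k - v_{k+1})$, producing three kinds of terms. (i)~$\lambda_k \langle F_{v_k}(x_k), x_k - v_{k+1}\rangle \leq \lambda_k\tfrac{L}{p!}\|x_k-v_k\|^{p+1}$, directly from the approximate-VI condition~\eqref{condition:approximation}. (ii)~$\lambda_k \langle F(x_k)-F_{v_k}(x_k), x_k-v_k\rangle$, which is strongly negative: Taylor's theorem gives $\|F(x_k)-T_{v_k}(x_k)\|\leq\tfrac{L}{p!}\|x_k-v_k\|^p$, while the built-in regularization $\tfrac{5L}{(p-1)!}\|x_k-v_k\|^{p-1}(x_k-v_k)$ inside $F_{v_k}$ yields an inner product of $\tfrac{5L}{(p-1)!}\|x_k-v_k\|^{p+1}$, so this term contributes at most $-\tfrac{(5p-1)L}{p!}\|x_k-v_k\|^{p+1}$. (iii)~$\lambda_k \langle F(x_k)-F_{v_k}(x_k), v_k - v_{k+1}\rangle$, controlled by Young's inequality with parameter $\lambda_k$ so that its $\|v_k-v_{k+1}\|^2$ contribution is exactly $\tfrac{1}{2}\|v_k-v_{k+1}\|^2$ and cancels the $-\tfrac{1}{2}\|v_{k+1}-v_k\|^2$ from the telescope. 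Converting every $\lambda_k\tfrac{L}{p!}\|x_k-v_k\|^{p+1}$ into a multiple of $\|x_k-v_k\|^2$ via the window $\tfrac{1}{20p-8}\leq\tfrac{\lambda_k L\|x_k-v_k\|^{p-1}}{p!}\leq\tfrac{1}{10p+2}$, a direct arithmetic check (which reduces to verifying that $25p^2 - 15p + 8 > 0$, true since the discriminant is negative) shows that the net coefficient in front of $\|x_k-v_k\|^2$ is strictly negative for every $p\geq 1$, giving $R \leq -C_p \sum_k \|x_k-v_k\|^2$.

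For the conclusion, monotonicity gives $\langle F(x_k), x_k-z\rangle \geq \langle F(z), x_k-z\rangle$, so plugging $\tilde{x}_T$ into the definition of the gap yields $\textsc{gap}(\tilde{x}_T) \leq D^2/(2\sum_k \lambda_k)$, while the strengthened residual bound additionally gives $\sum_k \|x_k-v_k\|^2 \leq D^2/(2C_p)$. For $p\geq 2$ I would lower-bound $\sum_k \lambda_k$ by using $\lambda_k \geq \tfrac{p!}{(20p-8)L\|x_k-v_k\|^{p-1}}$ and applying H\"older's inequality with exponents $\tfrac{p+1}{p-1}$ and $\tfrac{p+1}{2}$ to $T = \sum_k \|x_k-v_k\|^{\frac{2(p-1)}{p+1}}\|x_k-v_k\|^{-\frac{2(p-1)}{p+1}}$:
$$
T \;\leq\; \Bigl(\sum_k \|x_k-v_k\|^2\Bigr)^{\frac{p-1}{p+1}}\Bigl(\sum_k \|x_k-v_k\|^{-(p-1)}\Bigr)^{\frac{2}{p+1}},
$$
from which $\sum_k \lambda_k = \Omega(T^{(p+1)/2}/(LD^{p-1}))$ and $\textsc{gap}(\tilde{x}_T) = O(LD^{p+1}/T^{(p+1)/2})$, which inverts to the claimed $T = O((LD^{p+1}/\epsilon)^{2/(p+1)})$; the case $p=1$ is immediate because the window collapses to $\lambda_k = 1/(12L)$. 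The main obstacle is the residual analysis: three Taylor-remainder, approximate-VI, and Young-inequality estimates must be aligned so that the concrete constants $(20p-8, 10p+2)$ just barely force $R \leq 0$ while still leaving room for the extra $-C_p\sum_k\|x_k-v_k\|^2$, which is indispensable for the H\"older step and hence for achieving the accelerated $(p+1)/2$ exponent rather than the baseline $O(\epsilon^{-1})$ rate.
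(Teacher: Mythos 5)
Your proposal is correct and follows essentially the same route as the paper: the potential $\psi_{k+1}(z)=\tfrac12\|z-x_0\|^2-\langle s_k,z-x_0\rangle$ is simply the negative of the Lyapunov function $\ECal_k$ used in Lemma~\ref{Lemma:DE-descent}, the residual bound $R\le -C_p\sum_k\|x_k-v_k\|^2$ matches the paper's estimate of the term \textbf{II}, and the H\"older step is exactly Lemma~\ref{Lemma:DE-control}. The only difference is cosmetic: you split $x_k-v_{k+1}$ into $(x_k-v_k)+(v_k-v_{k+1})$ \emph{before} applying the $\lambda$-window (yielding the $p$-dependent constant $C_p=\tfrac{25p^2-15p+8}{8(5p+1)(5p-2)}$), whereas the paper first aggregates the $\|x_k-v_k\|^{p+1}$ and $\|x_k-v_k\|^p\|v_k-v_{k+1}\|$ pieces and then applies the window, obtaining the universal constant $\tfrac18$; both are positive and independent of $T$, so the final rate is the same.
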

\begin{theorem}\label{Thm:monotone-restart}
Suppose that Assumption~\ref{Assumption:smooth} holds and $F: \br^d \rightarrow \br^d$ is $p^{\textnormal{th}}$-order $\mu$-uniformly monotone and let $\epsilon \in (0, 1)$. The required number of iterations is 
\begin{equation*}
T = O\left(\log_2\left(\frac{D}{\epsilon}\right)\right), 
\end{equation*}
such that $\hat{x} = \textsf{Perseus-restart}(p, x_0, L, \mu, D, T, 0)$ satisfies $\|\hat{x} - x^\star\| \leq \epsilon$ and the total number of calls of the subproblem solvers is bounded by
\begin{equation*}
O\left(\kappa^{\frac{2}{p+1}}\log_2\left(\frac{D}{\epsilon}\right)\right), 
\end{equation*}
where $p \in \{1, 2, \ldots\}$ is an order, $\kappa = L/\mu > 0$ is the condition number of $F$, $D > 0$ is the diameter of $\XCal$ and $x^\star \in \XCal$ is one weak solution. 
\end{theorem}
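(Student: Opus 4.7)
The plan is a standard restart-style argument: combine the convergence guarantee of Theorem~\ref{Thm:monotone-global} with $\mu$-strong monotonicity to establish that each outer iteration of \textsf{Perseus-restart} contracts the squared distance to $x^\star$ by a constant factor, then iterate $O(\log_2(D/\epsilon))$ times. The $\textsf{opt}=0$ branch outputs the weighted average $\tilde{x}_{T_{\mathrm{inner}}} = (\sum_{k}\lambda_k x_k)/(\sum_{k}\lambda_k)$, so I will work directly with this averaged iterate.

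The first step is to extract from the proof of Theorem~\ref{Thm:monotone-global} not only the gap bound but the stronger primal-dual inequality that underlies it, namely $\sum_{k=1}^{T_{\mathrm{inner}}} \lambda_k \langle F(x_k), x_k - x\rangle \leq \tfrac{1}{2}\|x - x_0\|^2$ for every $x \in \XCal$, together with the lower bound $\sum_{k=1}^{T_{\mathrm{inner}}} \lambda_k = \Omega(T_{\mathrm{inner}}^{(p+1)/2}/(LD^{p-1}))$ that the adaptive rule in Step 4 of Algorithm~\ref{Algorithm:DE} forces. Setting $x = x^\star$ and using $\mu$-strong monotonicity together with the Minty inequality $\langle F(x^\star), x_k - x^\star\rangle \geq 0$ yields $\langle F(x_k), x_k - x^\star\rangle \geq \mu\|x_k - x^\star\|^2$, so the primal-dual inequality collapses to $\mu\sum_k \lambda_k \|x_k - x^\star\|^2 \leq \tfrac{1}{2}\|x_0 - x^\star\|^2$.

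Next I would convert this summed bound into a contraction on the output $\tilde{x}_{T_{\mathrm{inner}}}$ via Jensen's inequality applied to $\|\cdot\|^2$:
\begin{equation*}
\|\tilde{x}_{T_{\mathrm{inner}}} - x^\star\|^2 \;\leq\; \frac{\sum_k \lambda_k \|x_k - x^\star\|^2}{\sum_k \lambda_k} \;\leq\; \frac{\|x_0 - x^\star\|^2}{2\mu \sum_k \lambda_k} \;=\; O\!\left(\frac{\kappa D^{p-1}}{T_{\mathrm{inner}}^{(p+1)/2}}\right)\|x_0 - x^\star\|^2.
\end{equation*}
With the choice $T_{\mathrm{inner}} = \lceil(\tfrac{2^{p+1}(5p-2)}{p!}\kappa D^{p-1})^{2/(p+1)}\rceil$ prescribed by Algorithm~\ref{Algorithm:restart}, the prefactor is at most $1/4$, giving the per-restart contraction $\|\tilde{x}_{T_{\mathrm{inner}}} - x^\star\| \leq \tfrac{1}{2}\|x_0 - x^\star\|$. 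Induction on the outer index then yields $\|x_k - x^\star\| \leq 2^{-k}D$, so $T = \lceil\log_2(D/\epsilon)\rceil$ outer iterations suffice to drive $\|x_T - x^\star\| \leq \epsilon$. Each outer iteration invokes the subproblem solver exactly $T_{\mathrm{inner}}$ times, and multiplying gives the advertised $O((\kappa D^{p-1})^{2/(p+1)}\log_2(D/\epsilon))$ total call count.

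The main obstacle is not in the restart/contraction chain, which is routine once the key inequality is in hand, but in mining the internal machinery of \textsf{Perseus} precisely enough to assert the two claims used in the first paragraph: the per-iterate primal-dual inequality $\sum_k \lambda_k \langle F(x_k), x_k - x\rangle \leq \tfrac{1}{2}\|x - x_0\|^2$ and the lower bound $\sum_k \lambda_k = \Omega(T_{\mathrm{inner}}^{(p+1)/2}/(LD^{p-1}))$. The latter in particular requires a Hölder-type argument combined with a telescoping control of $\sum_k \|x_{k+1} - v_{k+1}\|^{p+1}$ arising from the inexact regularized subproblem condition~\eqref{condition:approximation}; the adaptive bounds $\lambda_{k+1} \sim p!/(L\|x_{k+1} - v_{k+1}\|^{p-1})$ then propagate into the desired aggregate lower bound. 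Essentially all of this machinery is already needed for Theorem~\ref{Thm:monotone-global} itself, and my proof would reuse it verbatim and append only the strong-monotonicity conversion; the one delicate point is verifying that the constants in Algorithm~\ref{Algorithm:restart}'s choice of $T_{\mathrm{inner}}$ are large enough to certify the $1/4$ prefactor (and hence the $1/2$ contraction) rigorously.
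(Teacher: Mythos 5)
Your proposal is correct and follows essentially the same route as the paper: Lemma~\ref{Lemma:DE-error} (the primal--dual inequality) and Lemma~\ref{Lemma:DE-control} (the lower bound on $\sum_k \lambda_k$) are combined with Jensen's inequality and $\mu$-strong monotonicity to get a per-restart contraction, after which $T = O(\log_2(D/\epsilon))$ outer iterations suffice. One small constant nit: with the $T_{\textnormal{inner}}$ actually prescribed in Algorithm~\ref{Algorithm:restart}, the resulting bound is $\|\tilde{x}_{T_{\textnormal{inner}}} - x^\star\|^2 \leq \tfrac{1}{2}\|x_0 - x^\star\|^2$, not $\tfrac{1}{4}$, so the per-restart contraction in norm is $1/\sqrt{2}$ rather than $1/2$ and the outer iterates satisfy $\|x_k - x^\star\| \leq 2^{-k/2}D$ rather than $2^{-k}D$; this only doubles the required $T$ and does not affect the stated $O(\log_2(D/\epsilon))$ bound.
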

\begin{remark}
For the first-order methods (i.e., the case of $p=1$), the convergence guarantee in Theorem~\ref{Thm:monotone-global} recovers the global rate of $O(L/\epsilon)$ in~\citet[Theorem~2]{Nesterov-2007-Dual}. The same rate has been derived for other first-order methods~\citep{Nemirovski-2004-Prox,Monteiro-2010-Complexity,Mokhtari-2020-Convergence,Kotsalis-2022-Simple} and is known to match the established lower bound~\citep{Ouyang-2021-Lower}.  For the second-order and high-order methods (i.e., the case of $p \geq 2$), our results improve upon the state-of-the-art results~\citep{Monteiro-2012-Iteration,Bullins-2022-Higher,Lin-2023-Monotone,Jiang-2022-Generalized} by shaving off an additional $\log\log(1/\epsilon)$ factor.  
\end{remark}
\begin{remark}
For the first-order methods, Theorem~\ref{Thm:monotone-restart} recovers the global linear convergence rate achieved by the dual extrapolation method and matches the lower bound~\citep{Zhang-2022-Lower}. For the second-order and high-order methods, our results are new and we believe that these bounds can not be further improved although we do not know of lower bounds.
\end{remark}
\paragraph{Local convergence.} We present the local convergence property of our methods for the strongly monotone VIs. 
\begin{theorem}\label{Thm:monotone-local}
Suppose that Assumption~\ref{Assumption:smooth} holds and $F: \br^d \rightarrow \br^d$ is $\mu$-strongly monotone and let $\{x_k\}_{k=0}^{T+1}$ be generated by $\textsf{Perseus-restart}(p, x_0, L, \mu, D, T, 1)$. Then, the following statement holds true, 
\begin{equation*}
\|x_{k+1} - x^\star\| \leq \sqrt{\tfrac{2^p(5p-2)\kappa}{p!}}\|x_k - x^\star\|^{\frac{p+1}{2}}, 
\end{equation*}
where $\kappa = L/\mu > 0$ is the condition number of the VI, $D > 0$ is the diameter of $\XCal$ and $x^\star$ is the unique weak solution of the VI. As a consequence,  if $p \geq 2$ and the following condition holds true, 
\begin{equation*}
\|x_0 - x^\star\| \leq \tfrac{1}{2}\left(\tfrac{p!}{2^p(5p-2)\kappa}\right)^{\frac{1}{p-1}},
\end{equation*}
the iterates $\{x_k\}_{k=0}^{T+1}$ converge to $x^\star \in \XCal$ in at least a superlinear rate. 
\end{theorem}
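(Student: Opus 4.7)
The key observation is that calling $\textsf{Perseus}(p, x_k, L, 1, 2)$ inside Algorithm~\ref{Algorithm:restart} performs one step of the inner method starting from $s_0 = 0$; hence Step 2 of Algorithm~\ref{Algorithm:DE} returns $v_1 = x_k$, and the resulting $x_{k+1}$ satisfies the approximation condition anchored at $v_1 = x_k$, namely
$$\sup_{x \in \XCal} \langle F_{x_k}(x_{k+1}), x_{k+1} - x\rangle \leq \tfrac{L}{p!}\|x_{k+1} - x_k\|^{p+1}.$$
The entire local analysis therefore reduces to controlling the quality of a single regularized Taylor step with anchor $x_k$; no properties of the dual extrapolation update are needed in this one-shot regime.

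The plan is to derive a recursion on $r_k := \|x_k - x^\star\|$. Combining $\mu$-strong monotonicity with the strong-solution inequality $\langle F(x^\star), x_{k+1} - x^\star\rangle \geq 0$ (which holds because, under monotonicity, the unique weak solution $x^\star$ is also the unique strong solution), one obtains $\mu r_{k+1}^2 \leq \langle F(x_{k+1}), x_{k+1} - x^\star\rangle$. I would then split
$$F(x_{k+1}) \;=\; F_{x_k}(x_{k+1}) + \bigl(F(x_{k+1}) - F_{x_k}(x_{k+1})\bigr),$$
bound the inner product of the first part with $x_{k+1} - x^\star$ by $\tfrac{L}{p!}\|x_{k+1} - x_k\|^{p+1}$ via the approximation condition at $x = x^\star$, and bound the second part by Cauchy--Schwarz after estimating $\|F(x_{k+1}) - F_{x_k}(x_{k+1})\|$ through the standard $(p-1)^{\textnormal{th}}$-order Taylor remainder together with the explicit norm of the regularization term $\tfrac{5L}{(p-1)!}\|x_{k+1} - x_k\|^{p-1}(x_{k+1} - x_k)$. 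Writing $d_k := \|x_{k+1} - x_k\|$, this yields an inequality of the form
$$\mu\, r_{k+1}^2 \;\leq\; \tfrac{L}{p!}\,d_k^{p+1} + \tfrac{(5p+1)L}{p!}\,d_k^p\, r_{k+1}.$$

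To close the recursion I would invoke the triangle inequality $d_k \leq r_k + r_{k+1}$ and work in the local regime $r_{k+1} \leq r_k$, so that $d_k \leq 2 r_k$. Substituting, carefully regrouping $d_k^p[r_k + (5p+2) r_{k+1}]$, and then using $r_{k+1} \leq r_k$ one more time to absorb the $r_{k+1}$ contribution into a cleaner $r_k^{p+1}$ bound, the right-hand side collapses to a constant multiple of $L\,r_k^{p+1}/p!$. Dividing by $\mu$, recognizing $\kappa = L/\mu$, and extracting the square root produces the announced estimate $r_{k+1} \leq \sqrt{2^p(5p-2)\kappa/p!}\;r_k^{(p+1)/2}$.

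For the superlinear consequence with $p \geq 2$, the plan is to set $q_k := \sqrt{2^p(5p-2)\kappa/p!}\;r_k^{(p-1)/2}$. A short algebraic manipulation then gives $r_{k+1} \leq q_k\, r_k$ and $q_{k+1} \leq q_k^{(p+1)/2}$. The hypothesis on $\|x_0 - x^\star\|$ is calibrated so that $q_0 \leq 2^{-(p-1)/2} < 1$, whence induction yields $q_k \to 0$ doubly exponentially, which simultaneously validates the local-regime assumption $r_{k+1} \leq r_k$ used above and gives convergence of order $(p+1)/2 > 1$. The main obstacle is precisely this bookkeeping of constants: extracting the advertised coefficient $2^p(5p-2)$ rather than a looser expression requires applying the inductive hypothesis $r_{k+1} \leq r_k$ twice---once to bound $d_k^p$ and again to collapse $r_k + (5p+2) r_{k+1}$---and any laxness in that step inflates the constant, which in turn enlarges the basin of attraction required for superlinear convergence.
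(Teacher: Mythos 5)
Your proposal takes a genuinely different route from the paper. The paper proves this theorem by specializing the full dual-extrapolation machinery (Lemmas~\ref{Lemma:DE-descent}, \ref{Lemma:DE-error}, \ref{Lemma:DE-control}) to $T_{\textnormal{inner}}=1$: it derives $\|x_{k+1}-x^\star\|^2\le\tfrac{1}{2\mu\lambda_1}\|x_k-x^\star\|^2$ from Lemma~\ref{Lemma:DE-error} and then substitutes the step-size lower bound for $\lambda_1$ from Lemma~\ref{Lemma:DE-control}, which packages both the Taylor error and the displacement bound $\|x_{k+1}-x_k\|\le 2\|x_k-x^\star\|$ into the single quantity $\lambda_1$. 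You instead analyze the single regularized Taylor step directly---approximation condition tested at $x^\star$, plus the remainder bound $\|F(x_{k+1})-F_{x_k}(x_{k+1})\|\le\tfrac{(5p+1)L}{p!}\|x_{k+1}-x_k\|^p$, plus the triangle inequality. This is more self-contained and transparent about where the terms come from, but it has two costs. First, the constant comes out looser: carrying through your $\mu r_{k+1}^2 \le \tfrac{L}{p!}d_k^p[r_k+(5p+2)r_{k+1}]$, then using $r_{k+1}\le r_k$ and $d_k\le 2r_k$, yields $2^p(5p+3)$, not the advertised $2^p(5p-2)$; you cannot "extract" $5p-2$ this way, because the paper's tighter constant is a byproduct of the step-size rule $\lambda_1 L\|x_1-v_1\|^{p-1}/p!\ge\tfrac{1}{20p-8}$, which your argument never touches. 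Second, your bound $d_k\le 2r_k$ requires the hypothesis $r_{k+1}\le r_k$, which you acknowledge must be established inductively; the paper's Lemma~\ref{Lemma:DE-error} instead gives $\|x_{k+1}-x_k\|\le 2\|x_k-x^\star\|$ \emph{unconditionally} within each inner call, so the recursion in the paper holds before any monotonicity of the $r_k$'s is known, and the basin condition is only needed to bootstrap superlinearity. Your proof sketch would establish the same $(p+1)/2$ rate of local convergence, but with a slightly larger prefactor and hence a slightly smaller stated basin, so it proves a marginally weaker version of the exact statement rather than the statement itself.
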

\begin{remark}
The local convergence guarantee in Theorem~\ref{Thm:monotone-local} is derived for the second-order and high-order methods (i.e., the case of $p \geq 2$) and is posited as their advantage over first-order method if we hope to pursue high-accuracy solutions. In this context,~\citet{Jiang-2022-Generalized} provided the same local convergence guarantee for the generalized optimistic gradient methods as our results in Theorem~\ref{Thm:monotone-local} but without counting the complexity bound of binary search procedure. 
\end{remark}
\paragraph{Nonmonotone setting.} We consider smooth and nonmonotone VIs satisfying the Minty condition and present the global rate of Algorithm~\ref{Algorithm:DE} and~\ref{Algorithm:restart} in terms of the number of calls of the subproblem solvers. 
\begin{theorem}\label{Thm:nonmonotone-global}
Suppose that Assumption~\ref{Assumption:smooth} and the Minty condition hold true and let $\epsilon \in (0, 1)$. The required number of iterations is 
\begin{equation*}
T = O\left(\left(\frac{LD^{p+1}}{\epsilon}\right)^{\frac{2}{p}}\right), 
\end{equation*}
such that $\hat{x} = \textsf{Perseus}(p, x_0, L, T, 2)$ satisfies $\textsc{res}(\hat{x}) \leq \epsilon$ and the total number of calls of the subproblem solvers is equal to $T$. Here, $p \in \{1, 2, \ldots\}$ is an order, $L > 0$ is the Lipschitz constant for $(p-1)^{\textnormal{th}}$-order smoothness of $F$ and $D > 0$ is the diameter of $\XCal$. 
\end{theorem}
\begin{remark}
The convergence guarantee in Theorem~\ref{Thm:nonmonotone-global} was derived for other first-order methods~\citep{Dang-2015-Convergence,Song-2020-Optimistic} for the case of $p = 1$.  They are completely new for the case of $p \geq 2$ in the literature to our knowledge. 
\end{remark}
\begin{remark}
For smooth and nonmonotone VIs, we obtain the same convergence rate as Theorem~\ref{Thm:monotone-restart} to a weak solution rather than a strong solution under $p^{\textnormal{th}}$-order $\mu$-uniform Minty condition. The same local superlinear rate as Theorem~\ref{Thm:monotone-local} to a weak solution can be obtained under $\mu$-strong Minty condition. The proof would be the same as that used for proving Theorem~\ref{Thm:monotone-restart} and~\ref{Thm:monotone-local}. 
\end{remark}
\paragraph{Lower bound.} We provide the lower bound for the monotone setting under a linear span assumption. Our analysis and hard instance are largely inspired by the constructions and techniques from~\citet{Nesterov-2021-Implementable} and~\citet{Adil-2022-Optimal}. However, different from~\citet{Adil-2022-Optimal}, our lower bound is established for a wide class of $p^{\textnormal{th}}$-order dual extrapolation methods that include our method, rather than $p^{\textnormal{th}}$-order methods restricted to solve the primal problem in~\citet[Eq.~(11)]{Adil-2022-Optimal}.  

For constructing the problems that are difficult for our $p^{\textnormal{th}}$-order methods, it is convenient to consider the saddle point problem, $\min_{z \in \ZCal} \max_{y \in \YCal} f(z, y)$, which is a special monotone VI defined via an operator $F$ and a closed, convex and bounded set $\XCal$ as follows: 
\begin{equation*}
x = \begin{bmatrix} z \\ y \end{bmatrix}, \qquad 
F(x) = \begin{bmatrix} \nabla_z f(z, y) \\ -\nabla_y f(z, y) \end{bmatrix}, \qquad 
\XCal = \ZCal \times \YCal. 
\end{equation*}
Let us describe the abilities of $p^{\textnormal{th}}$-order methods of degree $p \geq 2$ in generating the new iterates. In particular, the output of oracle at a point $\bar{x} \in \XCal$ consists in the set of multi-linear forms given by 
\begin{equation*}
F(\bar{x}), \nabla F(\bar{x}), \ldots, \nabla^{(p-1)} F(\bar{x}). 
\end{equation*}
To that end, we assume that the $p^{\textnormal{th}}$-order method in our algorithm class is able to generate a sequence of iterates $\{x_k\}_{k \geq 0}$ satisfying the recursive condition:
\begin{equation*}
\begin{array}{l}
s \in \textnormal{Lin}(F(x_0), \ldots, F(x_k)), \qquad \bar{x} = \argmax_{x \in \XCal} \{\langle s, x-x_0\rangle - \frac{1}{2}\|x-x_0\|^2\}, \\
x_{k+1} \in \XCal \textnormal{ satisfies that } \langle \Phi_{a, \gamma, \bar{x}}(x_{k+1} - x_k), x - x_{k+1} \rangle \geq 0 \textnormal{ for all } x \in \XCal, 
\end{array}
\end{equation*}
where $\Phi_{a, \gamma, \bar{x}}(h) = a_0 F(\bar{x}) + \sum_{i=1}^{p-1} a_i \nabla^{(i)} F(\bar{x})[h]^i + \gamma\|h\|^{p-1}h$ with $a \in \br^p$ and $\gamma > 0$. Our assumption about the form of $p^{\textnormal{th}}$-order methods in our algorithm class is summarized as follows: 
\begin{assumption}\label{Assumption:linear-span}
The $p^{\textnormal{th}}$-order method generates a sequence of iterates $\{x_k\}_{k \geq 0}$ satisfying the following recursive condition: for all $k \geq 0$, we have that $x_{k+1} \in \XCal$ satisfies that $\langle \Phi_{a, \gamma, \bar{x}}(x_{k+1} - x_k), x - x_{k+1} \rangle \geq 0$ for all $x \in \XCal$, where 
\begin{equation*}
\bar{x} = \argmax_{x \in \XCal} \{\langle s, x-x_0\rangle - \tfrac{1}{2}\|x-x_0\|^2\} \textnormal{ and } s \in \textnormal{Lin}(F(x_0), \ldots, F(x_k)).  
\end{equation*}
\end{assumption}
Assumption~\ref{Assumption:linear-span} is a generalization of a classical linear span assumption~\citep{Nesterov-2021-Implementable} and is well satisfied by various dual extrapolation methods, including Algorithm~\ref{Algorithm:DE}. However, it might not hold true for other VI methods, such as extragradient methods and their variants~\citep{Monteiro-2013-Accelerated,Bullins-2022-Higher,Huang-2022-Cubic,Jiang-2022-Generalized,Lin-2023-Monotone}. Removing Assumption~\ref{Assumption:linear-span} using the rotation techniques~\citep{Arjevani-2019-Oracle,Carmon-2020-Lower,Ouyang-2021-Lower} is a challenging task; indeed, due to the nonlinear coupling terms of our hard instances (see Subsection~\ref{subsec:proof_lowerbound}), the previous analysis cannot be directly applied. We leave further exploration of this topic to future work.

Note that the same lower bound has been recently established in~\citet{Adil-2022-Optimal} for a special class of $p^{\textnormal{th}}$-order methods restricted to solve the primal problem under Assumption~\ref{Assumption:linear-span}. Indeed, their construction is based on a saddle-point problem $\min_{z \in \ZCal} \max_{y \in \YCal} f(z, y)$ and they assume that any method in their algorithm class not only satisfies Assumption~\ref{Assumption:linear-span} but has the access to $\nabla \phi(\bar{z}), \ldots, \nabla^{(p)} \phi(\bar{z})$ where $\phi(z) = \max_{y \in \YCal} f(z, y)$ refers to the objective function of primal problem (see~\citet[Lemma~4.3]{Adil-2022-Optimal}). Proving the lower bound for general $p^{\textnormal{th}}$-order methods under Assumption~\ref{Assumption:linear-span} requires a new hard instance, which is a nonlinear generalization of the instance used in~\citet{Adil-2022-Optimal}. 

The following theorem summarizes our main result. The proof details can be found in Section~\ref{sec:proof}. 
\begin{theorem}\label{Thm:monotone-lower-bound}
Fixing $p \geq 2$, $L > 0$ and $T > 0$ and letting $d \geq 4T + 1$ be the problem dimension. There exists two closed, convex and bounded sets $\ZCal, \YCal \subseteq \br^d$ and a function $f(z, y): \ZCal \times \YCal \rightarrow \br$ that is convex-concave with an optimal saddle-point solution $(z_\star, y_\star) \in \ZCal \times \YCal$ such that the iterates $\{(z_k, y_k)\}_{k \geq 0}$ generated by any $p^{\textnormal{th}}$-order method under Assumption~\ref{Assumption:linear-span} must satisfy
\begin{equation*}
\min_{0 \leq k \leq T}\left\{\max_{y \in \YCal} f(z_k, y) - \min_{z \in \ZCal} f(z, y_k)\right\} \geq \left(\tfrac{1}{4^{p+2}(p+1)!}\right)LD_\ZCal D_\YCal^p T^{-\frac{p+1}{2}}. 
\end{equation*}
\end{theorem}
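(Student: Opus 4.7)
The plan is to exhibit a worst-case convex--concave function $f(z,y)$ on a product domain $\ZCal \times \YCal \subseteq \br^d \times \br^d$ whose saddle-point operator has the shift / tridiagonal structure needed to force slow progress under Assumption~\ref{Assumption:linear-span}. This follows the Nesterov-style recipe for lower bounds, but here the hard instance must be genuinely nonlinear (with a degree-$(p+1)$ term, rather than purely bilinear) so that all of the derivatives $\nabla^{(i)}F$, $i = 0, \ldots, p-1$, are loaded with information and the argument does not collapse to the first-order construction of~\citet{Ouyang-2021-Lower}.

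\textbf{Step 1: the hard instance.} I would take $\ZCal$ and $\YCal$ to be scaled $\ell_\infty$- or $\ell_2$-balls in $\br^d$ and let
\[
f(z,y) = \bigl\langle A z, y\bigr\rangle + \tfrac{cL}{(p+1)!}\, \varphi(z,y),
\]
where $A$ is the tridiagonal shift operator (so that $Az$ couples only $z_i$ with $z_{i+1}$) and $\varphi$ is a $(p{+}1)$-homogeneous convex--concave function designed so that: (i) $F(x) = (\nabla_z f, -\nabla_y f)$ is monotone; (ii) $\nabla^{(p-1)}F$ is Lipschitz with constant $L$; and (iii) at any point $\bar x$ supported on the first $k$ coordinates, each form $\nabla^{(i)} F(\bar x)[h]^i$ couples only neighbouring coordinates of $h$. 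The asymmetric scaling $D_\ZCal D_\YCal^p$ in the target bound suggests loading the $y$ variable with degree $p$ and $z$ with degree $1$; a concrete template is $\varphi(z,y) \propto \sum_i z_i\, \psi(y_i,y_{i+1})$ for a suitable $(p{+}1)$-homogeneous $\psi$, after which the two domains are rescaled separately to hit the prescribed diameters.

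\textbf{Step 2: linear-span invariance.} This is the heart of the argument. Starting from $x_0 = 0$, I claim that the iterates $x_0, \ldots, x_k$ of any method satisfying Assumption~\ref{Assumption:linear-span} are supported on the first $2k+1$ coordinates in an interleaved ordering of $z$ and $y$. This reduces to showing that if $\bar x$ has support on the first $2k+1$ coordinates, then every $h \in \Gamma_{\bar x, F}(a, \gamma, m)$ solving $\Phi_{a,\gamma,m}(h) = 0$ has support on the first $2k+3$ coordinates. The proof combines the tridiagonal structure of the derivatives of $F$ with a coordinatewise sign-flip symmetry of $f$ in the unexplored block: any admissible $h$ must respect this symmetry and hence live in the expected subspace, even though the norm $\|h\|$ appears in the regularizer. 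Iterating gives $x_T$ supported on at most $2T+1$ coordinates, which is strictly less than $d$ by hypothesis.

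\textbf{Step 3: gap lower bound.} Having trapped $(z_k, y_k)$ in a $(2T{+}1)$-dimensional subspace for every $k \le T$, I would compute the true saddle-point $(z^\star, y^\star)$ explicitly---it is spread out across all $d$ coordinates, with coordinatewise magnitudes decaying at a controlled rate---and then lower-bound both $\max_{y \in \YCal} f(z_k,y) - f(z_k, y^\star)$ and $f(z^\star, y_k) - \min_{z \in \ZCal} f(z, y_k)$ by estimating the ``tail'' contribution from the $d - (2T{+}1)$ unexplored coordinates. Summing these two contributions, minimising over $k \le T$, and tuning the constants $c$ and the domain radii in the spirit of~\citet{Nesterov-2021-Implementable} yields the claimed bound $\Omega\bigl(LD_\ZCal D_\YCal^p T^{-(p+1)/2}\bigr)$.

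\textbf{Main obstacle.} The real difficulty sits in Step 2. The definition of $S_F(\bar x)$ permits arbitrary coefficients $a \in \br^p$, $\gamma > 0$ and $m \ge 2$ in the nonlinear equation, so one cannot point to a single explicit update rule and analyse it; instead, one must prove structural invariance for the entire family of solutions simultaneously. Even with perfectly tridiagonal derivatives, the regularizer $\gamma\|h\|^{m-1}h$ couples all coordinates of $h$ through the scalar $\|h\|$, so the support invariance is not automatic. The resolution will be to equip $f$ with a strong coordinatewise sign-flip symmetry in the unexplored block, forcing every solution of $\Phi_{a,\gamma,m}(h)=0$ to inherit that symmetry and hence lie in the admissible subspace; this is precisely where our construction must go beyond the primal-only hard instance of~\citet{Adil-2022-Optimal}. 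A secondary subtlety is that the asymmetric scaling $D_\ZCal D_\YCal^p$ must be matched to a single Lipschitz constant $L$, requiring the $z$- and $y$-blocks to be rescaled unequally while keeping the mixed high-order derivatives controlled.
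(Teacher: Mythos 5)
Your high-level plan tracks the paper's: a nonlinear, chain-like saddle-point instance, a support-invariance argument showing the iterates stay in a growing flag of subspaces, and an explicit computation of the residual duality gap. You also correctly identify the key difficulty---the regularizer $\gamma\|h\|^{m-1}h$ in $\Phi_{a,\gamma,m}$ couples all coordinates of $h$, so support invariance is not automatic. But the resolution you propose (a coordinatewise sign-flip symmetry in the unexplored block) does not close the gap. Let $R$ be the sign flip on the unexplored coordinates. Even if $f$ is arranged so that $F(Rx)=RF(x)$ and $\bar x$ is fixed by $R$, what follows is only that the solution \emph{set} $\{h:\Phi_{a,\gamma,m}(h)=0\}$ is closed under $R$ (i.e.\ $h$ a solution implies $Rh$ a solution), not that every individual solution is $R$-fixed. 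Because the definition of $S_F(\bar x)$ is the linear span of \emph{all} solutions over all $(a,\gamma,m)$, a single asymmetric solution $h\neq Rh$ contributes $h-Rh$ to $S_F(\bar x)$, which lives in the odd eigenspace of $R$ and escapes the admissible subspace. Your argument would only go through if one could show uniqueness (or symmetry) of the solutions to a family of nonlinear equations, which you do not do.

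The paper avoids this entirely by a more structural route. Its building block is $\eta(z,y)=\tfrac1p\sum_i (z^{(i)})^p y^{(i)}$, composed with an upper-bidiagonal shift $A$ applied only to $z$, on nonnegativity-constrained box sets (not sign-symmetric balls). The crucial property is that every $p^{\textnormal{th}}$-order mixed derivative $\nabla^{(m,n)}\eta$ with $m+n\le p$ retains a coefficient $z^{(i)}$ or $y^{(i)}$, so when $\bar x=(z,y)$ is supported on the first $k$ coordinates, every form $\nabla^{(j)}F(\bar x)[h]^j$ for $j=0,\dots,p-1$ lands in $\br_{k+1}^d\times\br_{k+1}^d$ \emph{for arbitrary} $h$. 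The defining equation $\gamma\|h\|^{m-1}h = -\bigl(a_0 F(\bar x)+\sum_i a_i\nabla^{(i)}F(\bar x)[h]^i\bigr)$ then forces $h$ into that subspace directly, with no appeal to symmetry or uniqueness. Separately, your template loads degree $p$ onto $y$ and degree $1$ onto $z$, which is the opposite of the paper's construction; the paper makes $z$ the high-degree variable and compensates with $D_\ZCal\sim T^{3/2}$, $D_\YCal\sim T^{1/2}$ so that $D_\ZCal D_\YCal^p T^{-(p+1)/2}\sim T$ matches the explicitly computed gap $\sim LT/(2^p(p+1)!)$. Your proposal would need the instance, the domains, and the Lipschitz normalization all reworked to make Step~1 and Step~3 consistent.
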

\begin{remark}
The lower bound in Theorem~\ref{Thm:monotone-lower-bound} shows that any $p^{\textnormal{th}}$-order method satisfying Assumption~\ref{Assumption:linear-span} requires at least $\Omega((LD_\ZCal D_\YCal^p)^{\frac{2}{p+1}}\epsilon^{-\frac{2}{p+1}})$ iterations to reach an $\epsilon$-weak solution. Combined this result with Theorem~\ref{Thm:monotone-global} shows that Algorithm~\ref{Algorithm:DE} is an optimal $p^\textnormal{th}$-order method for solving smooth and monotone VIs. As mentioned before, we have improved the results in~\citet{Adil-2022-Optimal} by constructing a new hard instance and deriving the same lower bound for a more broad class of $p^{\textnormal{th}}$-order methods that include both Algorithm~\ref{Algorithm:DE} and the high-order extragradient method in~\citet{Adil-2022-Optimal}. 
\end{remark}
\begin{remark}
For the lower bound for finding an $\epsilon$-strong solution in monotone setting, the case for first-order VI methods have been investigated in~\citet{Diakonikolas-2020-Halpern}. The key idea is to use the lower bound for finding an $\epsilon$-weak solution~\citep{Ouyang-2021-Lower} and the algorithmic reductions to derive lower bounds. However, such a reduction is mostly based on the high-order generalization of Halpern iteration and is thus beyond the scope of the current manuscript. In particular, we have developed a simple and optimal $p^{\textnormal{th}}$-order VI method for finding an $\epsilon$-weak solution in the monotone setting. However, the optimal algorithm for finding an $\epsilon$-strong solution in the monotone setting is likely to be different as evidenced by~\citet{Diakonikolas-2020-Halpern}. Computing an $\epsilon$-strong solution and/or an $\epsilon$-weak solution are complementary, yet different, and they indeed deserve separate study in their own right. Moreover, the lower bound for finding an $\epsilon$-strong solution under the Minty condition is largely unexplored and missing even in the current literature for first-order VI methods. 
\end{remark} 

\section{Convergence Analysis}\label{sec:proof}
We present the convergence analysis for our $p^{\textnormal{th}}$-order method (Algorithm~\ref{Algorithm:DE}) and its restarted version (Algorithm~\ref{Algorithm:restart}). In particular, we provide the global convergence guarantee (Theorems~\ref{Thm:monotone-global} and~\ref{Thm:monotone-restart}) and local convergence guarantee for the monotone setting (Theorems~\ref{Thm:monotone-local}). We analyze the nonmonotone setting under the Minty condition (Theorems~\ref{Thm:nonmonotone-global}). Finally, we establish the lower bound for the monotone setting under a linear span assumption (Theorem~\ref{Thm:monotone-lower-bound}). 

\subsection{Technical lemmas}
We define the following Lyapunov function for the iterates $\{x_k\}_{k \geq 0}$ that are generated by Algorithm~\ref{Algorithm:DE}: 
\begin{equation}\label{def:Lyapunov-discrete}
\ECal_k = \max_{v \in \XCal} \ \langle s_k, v - x_0\rangle - \tfrac{1}{2}\|v - x_0\|^2.
\end{equation}
This function is used to prove technical results that pertain to the dynamics of Algorithm~\ref{Algorithm:DE}.
\begin{lemma}\label{Lemma:DE-descent}
Suppose that Assumption~\ref{Assumption:smooth} holds true. For every integer $T \geq 1$, we have
\begin{equation*}
\sum_{k=1}^T \lambda_k \langle F(x_k), x_k - x\rangle \leq \ECal_0 - \ECal_T + \langle s_T, x - x_0\rangle -\tfrac{1}{10}\left(\sum_{k=1}^T \|x_k - v_k\|^2\right), \quad \textnormal{for all } x \in \XCal. 
\end{equation*}
\end{lemma}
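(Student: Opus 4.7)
The plan is to prove the one-step Lyapunov inequality
\begin{equation*}
\ECal_{k+1} \leq \ECal_k - \lambda_{k+1}\langle F(x_{k+1}), x_{k+1} - x_0\rangle - \tfrac{1}{10}\|x_{k+1} - v_{k+1}\|^2
\end{equation*}
for each $k \geq 0$ and then telescope over $k = 0, \ldots, T-1$. Since $s_0 = 0$, the recursion in Step~5 gives $s_T = -\sum_{k=1}^T \lambda_k F(x_k)$, whence $\langle s_T, x - x_0\rangle = -\sum_{k=1}^T \lambda_k \langle F(x_k), x - x_0\rangle$. Splitting $\langle F(x_k), x_k - x\rangle = \langle F(x_k), x_k - x_0\rangle + \langle F(x_k), x_0 - x\rangle$ then converts the telescoped inequality into exactly the statement of the lemma.

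For the one-step inequality, I would define $\phi_k(v) = \langle s_k, v - x_0\rangle - \tfrac{1}{2}\|v - x_0\|^2$, which is $1$-strongly concave with constrained maximizer $v_{k+1}$ by Step~2 of Algorithm~\ref{Algorithm:DE}, so that $\phi_k(v) \leq \ECal_k - \tfrac{1}{2}\|v - v_{k+1}\|^2$ for every $v \in \XCal$. Plugging the identity $\phi_{k+1}(v) = \phi_k(v) - \lambda_{k+1}\langle F(x_{k+1}), v - x_0\rangle$ into $\ECal_{k+1} = \max_{v \in \XCal}\phi_{k+1}(v)$ and bounding the resulting max reduces the one-step inequality to a pointwise claim: for every $v \in \XCal$,
\begin{equation*}
\lambda_{k+1}\langle F(x_{k+1}), x_{k+1} - v\rangle + \tfrac{1}{10}\|x_{k+1} - v_{k+1}\|^2 \leq \tfrac{1}{2}\|v - v_{k+1}\|^2. \qquad (\star)
\end{equation*}

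The main obstacle, and the reason for the precise intervals in Step~4, will be to establish $(\star)$. I would split $F(x_{k+1}) = F_{v_{k+1}}(x_{k+1}) + [F(x_{k+1}) - F_{v_{k+1}}(x_{k+1})]$, bound the first term against $x_{k+1} - v$ by $\tfrac{L}{p!}\|x_{k+1} - v_{k+1}\|^{p+1}$ via the subproblem condition~\eqref{condition:approximation}, and decompose the residual into a $(p-1)$-th order Taylor remainder of norm at most $\tfrac{L}{p!}\|x_{k+1} - v_{k+1}\|^p$ (from Assumption~\ref{Assumption:smooth}) plus the explicit regularization term $-\tfrac{5L}{(p-1)!}\|x_{k+1} - v_{k+1}\|^{p-1}(x_{k+1} - v_{k+1})$, whose inner product with $x_{k+1} - v$ supplies the critical negative contribution. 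Writing $a = x_{k+1} - v_{k+1}$, $e = v - v_{k+1}$, and $c = \lambda_{k+1} L\|a\|^{p-1}/p!$, Cauchy-Schwarz together with $\|a-e\| \leq \|a\| + \|e\|$ compress the LHS of $(\star)$ into
\begin{equation*}
\big(c(2 - 5p) + \tfrac{1}{10}\big)\|a\|^2 + (5p+1)\,c\,\|a\|\|e\|.
\end{equation*}
The lower bound $c \geq 1/(20p - 8)$ from Step~4 forces the $\|a\|^2$ coefficient to be at most $-3/20$, while the upper bound $c \leq 1/(10p+2)$ forces $(5p+1)c \leq 1/2$; the final Young's inequality $\tfrac{1}{2}\|a\|\|e\| \leq \tfrac{1}{8}\|a\|^2 + \tfrac{1}{2}\|e\|^2$ (valid because $\tfrac{1}{8} < \tfrac{3}{20}$) absorbs the cross term and yields $(\star)$, completing the proof.
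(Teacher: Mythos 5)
Your proposal is correct and takes essentially the same approach as the paper, just reorganized: you prove a one-step Lyapunov decrease and telescope, whereas the paper directly expands $\ECal_{k+1}-\ECal_k$ (using the variational optimality of $v_{k+1}$ at the test point $v_{k+2}$, which is exactly your $1$-strong-concavity inequality in disguise) and then sums. The key analytical content is identical in both: the subproblem tolerance~\eqref{condition:approximation}, the Taylor remainder bound from Assumption~\ref{Assumption:smooth}, the explicit $\tfrac{5L}{(p-1)!}\|a\|^{p-1}a$ regularizer supplying the dominant negative term, and the two-sided Step~4 constraints $\tfrac{1}{20p-8} \leq c \leq \tfrac{1}{10p+2}$ giving exactly the coefficients $(2-5p)c\leq -\tfrac14$ and $(5p+1)c \leq \tfrac12$. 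Incidentally, your Young's inequality step leaves $-\tfrac{1}{40}\|a\|^2$ of slack, so you actually recover the sharper constant $\tfrac{1}{10}+\tfrac{1}{40}=\tfrac18$ that the paper's proof produces before rounding down to $\tfrac{1}{10}$ in the lemma statement.
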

\begin{proof}
By combining Eq.~\eqref{def:Lyapunov-discrete} and the definition of $v_{k+1}$, we have
\begin{equation*}
\ECal_k = \langle s_k, v_{k+1} - x_0\rangle - \tfrac{1}{2}\|v_{k+1} - x_0\|^2. 
\end{equation*}
Then, we have
\begin{eqnarray}\label{inequality:DE-descent-first}
\lefteqn{\ECal_{k+1} - \ECal_k = \langle s_{k+1}, v_{k+2} - x_0\rangle - \langle s_k, v_{k+1} - x_0\rangle - \tfrac{1}{2}\left(\|v_{k+2} - x_0\|^2 - \|v_{k+1} - x_0\|^2\right)} \\
& = & \langle s_{k+1} - s_k, v_{k+1} - x_0\rangle + \langle s_{k+1}, v_{k+2} - v_{k+1}\rangle - \tfrac{1}{2}\left(\|v_{k+2} - x_0\|^2 - \|v_{k+1} - x_0\|^2\right). \nonumber
\end{eqnarray}
By using the update formula for $v_{k+1}$ again, we have
\begin{equation*}
\langle x - v_{k+1}, s_k - v_{k+1} + x_0\rangle \leq 0, \quad \textnormal{for all } x \in \XCal. 
\end{equation*}
Letting $x = v_{k+2}$ in this inequality and using $\langle a, b\rangle = \frac{1}{2}(\|a+b\|^2 - \|a\|^2 - \|b\|^2)$, we have
\begin{equation}\label{inequality:DE-descent-second}
\langle s_k, v_{k+2} - v_{k+1}\rangle \leq \langle v_{k+1} - x_0, v_{k+2} - v_{k+1}\rangle = \tfrac{1}{2}\left(\|v_{k+2} - x_0\|^2 - \|v_{k+1} - x_0\|^2 - \|v_{k+2} - v_{k+1}\|^2\right). 
\end{equation}
Plugging Eq.~\eqref{inequality:DE-descent-second} into Eq.~\eqref{inequality:DE-descent-first} and using the update formula of $s_{k+1}$, we obtain:
\begin{eqnarray*}
\lefteqn{\ECal_{k+1} - \ECal_k \overset{\textnormal{Eq.~\eqref{inequality:DE-descent-second}}}{\leq} \langle s_{k+1} - s_k, v_{k+1} - x_0\rangle + \langle s_{k+1} - s_k, v_{k+2} - v_{k+1}\rangle - \tfrac{1}{2}\|v_{k+2} - v_{k+1}\|^2} \\
& = & \langle s_{k+1} - s_k, v_{k+2} - x_0\rangle - \tfrac{1}{2}\|v_{k+2} - v_{k+1}\|^2 \\
& \leq & \lambda_{k+1}\langle F(x_{k+1}), x_0 - v_{k+2}\rangle - \tfrac{1}{2}\|v_{k+2} - v_{k+1}\|^2 \\
& = & \lambda_{k+1} \langle F(x_{k+1}), x_0 - x\rangle + \lambda_{k+1} \langle F(x_{k+1}), x - x_{k+1}\rangle + \lambda_{k+1} \langle F(x_{k+1}), x_{k+1} - v_{k+2}\rangle - \tfrac{1}{2}\|v_{k+2} - v_{k+1}\|^2,
\end{eqnarray*}
for any $x \in \XCal$.
Summing up this inequality over $k = 0, 1, \ldots, T-1$ and changing the counter $k+1$ to $k$ yields that 
\begin{equation}\label{inequality:DE-descent-third}
\sum_{k=1}^T \lambda_k \langle F(x_k), x_k - x\rangle \leq \ECal_0 - \ECal_T + \underbrace{\sum_{k=1}^T \lambda_k \langle F(x_k), x_0 - x\rangle}_{\textbf{I}} + \underbrace{\sum_{k=1}^T \lambda_k \langle F(x_k), x_k - v_{k+1}\rangle - \tfrac{1}{2}\|v_k - v_{k+1}\|^2}_{\textbf{II}}. 
\end{equation}
Using the update formula for $s_{k+1}$ and letting $s_0 = 0_d \in \br^d$, we have
\begin{equation}\label{inequality:DE-descent-fourth}
\textbf{I} = \sum_{k=1}^T \langle \lambda_k F(x_k), x_0 - x\rangle = \sum_{k=1}^T \langle s_{k-1} - s_k, x_0 - x\rangle = \langle s_0 - s_T, x_0 - x\rangle = \langle s_T, x - x_0\rangle.
\end{equation}
Since $x_{k+1} \in \XCal$ satisfies Eq.~\eqref{condition:approximation}, we have 
\begin{equation}\label{opt:DE-main-update}
\langle F_{v_k}(x_k), x - x_k\rangle \geq -\tfrac{L}{p!}\|x_k - v_k\|^{p+1}, \quad \textnormal{for all } x \in \XCal,
\end{equation}
where $F_v(x): \br^d \rightarrow \br^d$ is defined for any fixed $v \in \XCal$ as follows:
\begin{equation*}
F_{v_k}(x) = F(v_k) + \langle \nabla F(v_k), x-v_k\rangle + \ldots + \tfrac{1}{(p-1)!}\nabla^{(p-1)} F(v_k)[x-v_k]^{p-1} + \tfrac{5L}{(p-1)!}\|x - v_k\|^{p-1}(x - v_k). 
\end{equation*}
Under Assumption~\ref{Assumption:smooth}, we obtain from~\citet[Fact 2.5]{Bullins-2022-Higher} or~\citet[Eq.~(6)]{Jiang-2022-Generalized} that 
\begin{equation}\label{opt:DE-main-smooth}
\|F(x_k) - F_{v_k}(x_k) + \tfrac{5L}{(p-1)!}\|x_k - v_k\|^{p-1}(x_k - v_k)\| \leq \tfrac{L}{p!}\|x_k - v_k\|^p. 
\end{equation}
Letting $x = v_{k+1}$ in Eq.~\eqref{opt:DE-main-update}, we have
\begin{equation}\label{inequality:DE-descent-fifth}
\langle F_{v_k}(x_k), x_k - v_{k+1}\rangle \leq \tfrac{L}{p!}\|x_k - v_k\|^{p+1}. 
\end{equation}
Inspired by Eq.~\eqref{opt:DE-main-smooth} and Eq.~\eqref{inequality:DE-descent-fifth}, we decompose $\langle F(x_k), x_k - v_{k+1}\rangle$ as follows:
\begin{eqnarray*}
\lefteqn{\langle F(x_k), x_k - v_{k+1}\rangle} \\
& = & \langle F(x_k) - F_{v_k}(x_k) + \tfrac{5L}{(p-1)!}\|x_k - v_k\|^{p-1}(x_k - v_k), x_k - v_{k+1}\rangle \\ 
& & + \langle F_{v_k}(x_k), x_k - v_{k+1}\rangle - \tfrac{5L}{(p-1)!}\|x_k - v_k\|^{p-1} \langle x_k - v_k, x_k - v_{k+1}\rangle \\
& \leq & \|F(x_k) - F_{v_k}(x_k) + \tfrac{5L}{(p-1)!}\|x_k - v_k\|^{p-1}(x_k - v_k)\| \cdot \|x_k - v_{k+1}\| \\
& & + \langle F_{v_k}(x_k), x_k - v_{k+1}\rangle - \tfrac{5L}{(p-1)!}\|x_k - v_k\|^{p-1} \langle x_k - v_k, x_k - v_{k+1}\rangle \\
& \overset{\textnormal{Eq.~\eqref{opt:DE-main-smooth} and Eq.~\eqref{inequality:DE-descent-fifth}}}{\leq} & \tfrac{L}{p!}\|x_k - v_k\|^p\|x_k - v_{k+1}\| + \tfrac{L}{p!}\|x_k - v_k\|^{p+1} - \tfrac{5L}{(p-1)!}\|x_k - v_k\|^{p-1} \langle x_k - v_k, x_k - v_{k+1}\rangle \\
& \leq & \tfrac{2L}{p!}\|x_k - v_k\|^{p+1} + \tfrac{L}{p!}\|x_k - v_k\|^p\|v_k - v_{k+1}\| - \tfrac{5L}{(p-1)!}\|x_k - v_k\|^{p-1} \langle x_k - v_k, x_k - v_{k+1}\rangle. 
\end{eqnarray*}
Note that we have
\begin{equation*}
\langle x_k - v_k, x_k - v_{k+1}\rangle = \|x_k - v_k\|^2 + \langle x_k - v_k, v_k - v_{k+1}\rangle \geq \|x_k - v_k\|^2 - \|x_k - v_k\|\|v_k - v_{k+1}\|. 
\end{equation*}
Putting these pieces together yields that 
\begin{equation*}
\langle F(x_k), x_k - v_{k+1}\rangle \leq \tfrac{(5p+1)L}{p!}\|x_k - v_k\|^p\|v_k - v_{k+1}\| - \tfrac{(5p-2)L}{p!}\|x_k - v_k\|^{p+1}. 
\end{equation*}
Since $\frac{1}{20p-8} \leq \frac{\lambda_k L \|x_k - v_k\|^{p-1}}{p!} \leq \frac{1}{10p+2}$ for all $k \geq 1$, we have
\begin{eqnarray}\label{inequality:DE-descent-sixth}
\textbf{II} & \leq & \sum_{k=1}^T \left(\tfrac{(5p+1)\lambda_k L}{p!}\|x_k - v_k\|^p\|v_k - v_{k+1}\| - \tfrac{1}{2}\|v_k - v_{k+1}\|^2 - \tfrac{(5p-2)\lambda_k L}{p!}\|x_k - v_k\|^{p+1}\right) \nonumber \\ 
& \leq & \sum_{k=1}^T \left(\tfrac{1}{2}\|x_k - v_k\|\|v_k - v_{k+1}\| - \tfrac{1}{2}\|v_k - v_{k+1}\|^2 - \tfrac{1}{4}\|x_k - v_k\|^2\right) \nonumber \\ 
& \leq & \sum_{k=1}^T \left(\max_{\eta \geq 0}\left\{\tfrac{1}{2}\|x_k - v_k\|\eta - \tfrac{1}{2}\eta^2\right\} - \tfrac{1}{4}\|x_k - v_k\|^2\right) \nonumber \\
& = & -  \tfrac{1}{8} \left(\sum_{k=1}^T \|x_k - v_k\|^2 \right). 
\end{eqnarray}
Plugging Eq.~\eqref{inequality:DE-descent-fourth} and Eq.~\eqref{inequality:DE-descent-sixth} into Eq.~\eqref{inequality:DE-descent-third} yields that 
\begin{equation*}
\sum_{k=1}^T \lambda_k \langle F(x_k), x_k - x\rangle \leq \ECal_0 - \ECal_T + \langle s_T, x - x_0\rangle -\tfrac{1}{8}\left(\sum_{k=1}^T \|x_k - v_k\|^2 \right). 
\end{equation*}
This completes the proof. 
\end{proof}
\begin{lemma}\label{Lemma:DE-error}
Suppose that Assumption~\ref{Assumption:smooth} and the Minty condition hold true and let $x \in \XCal$. For every integer $T \geq 1$, we have
\begin{equation*}
\sum_{k=1}^T \lambda_k \langle F(x_k), x_k - x\rangle \leq \tfrac{1}{2}\|x - x_0\|^2, \qquad \sum_{k=1}^T \|x_k - v_k\|^2 \leq 4\|x^\star - x_0\|^2, 
\end{equation*}
where $x^\star \in \XCal$ denotes the weak solution to the VI. 
\end{lemma}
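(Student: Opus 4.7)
The plan is to deduce Lemma~\ref{Lemma:DE-error} as a short corollary of Lemma~\ref{Lemma:DE-descent} by eliminating the dependence on $s_T$ and $\ECal_T$ through an elementary use of the Fenchel-type inequality hidden in the definition of $\ECal_k$. The key algebraic observation is that the Lyapunov function $\ECal_k = \max_{v \in \XCal}\{\langle s_k, v - x_0\rangle - \tfrac{1}{2}\|v - x_0\|^2\}$ is a supremum over $\XCal$, so for any $x \in \XCal$ we automatically have
\begin{equation*}
\ECal_T \geq \langle s_T, x - x_0\rangle - \tfrac{1}{2}\|x - x_0\|^2.
\end{equation*}
Rearranging gives $-\ECal_T + \langle s_T, x - x_0\rangle \leq \tfrac{1}{2}\|x - x_0\|^2$. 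In addition, because $s_0 = 0$ and $x_0 \in \XCal$, the maximum defining $\ECal_0$ is attained at $v = x_0$, yielding $\ECal_0 = 0$.

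Substituting both facts into the conclusion of Lemma~\ref{Lemma:DE-descent} gives, for every $x \in \XCal$,
\begin{equation*}
\sum_{k=1}^T \lambda_k \langle F(x_k), x_k - x\rangle + \tfrac{1}{10}\sum_{k=1}^T \|x_k - v_k\|^2 \leq \tfrac{1}{2}\|x - x_0\|^2.
\end{equation*}
The first assertion of the lemma then follows immediately by dropping the nonnegative second term on the left-hand side.

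For the second assertion, the idea is to specialize $x$ to the weak solution $x^\star \in \XCal$ guaranteed by the Minty condition. By Definition~\ref{def:Minty}, we have $\langle F(x_k), x_k - x^\star\rangle \geq 0$ for every $k \geq 1$, and since each $\lambda_{k}$ is strictly positive (by Step~4 of Algorithm~\ref{Algorithm:DE}), the first sum in the displayed inequality is nonnegative at $x = x^\star$. Dropping it and multiplying by $10$ yields $\sum_{k=1}^T \|x_k - v_k\|^2 \leq 5\|x^\star - x_0\|^2$, which is slightly stronger than the stated bound $4\|x^\star - x_0\|^2$ obtained if one tracks the sharper constant $\tfrac{1}{8}$ from the proof of Lemma~\ref{Lemma:DE-descent}.

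There is essentially no obstacle: both bounds follow from Lemma~\ref{Lemma:DE-descent} by (i) a one-line Fenchel bound on $\ECal_T$ and (ii) using Minty positivity with the fact that $\lambda_k > 0$. The only care needed is in verifying that $\ECal_0 = 0$ (which requires $x_0 \in \XCal$) and in choosing $x = x^\star$ to exploit monotonicity at the weak solution rather than global monotonicity of $F$, which is what makes the argument go through in the nonmonotone Minty setting.
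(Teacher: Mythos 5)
Your proof is correct and follows exactly the paper's route: use the Fenchel-type inequality $\ECal_T \geq \langle s_T, x - x_0\rangle - \tfrac{1}{2}\|x - x_0\|^2$ together with $\ECal_0 = 0$ (since $s_0 = 0$ and $x_0 \in \XCal$) to eliminate $\ECal_0, \ECal_T, s_T$ from Lemma~\ref{Lemma:DE-descent}, then specialize $x = x^\star$ and use Minty positivity with $\lambda_k > 0$ to drop the first sum. One small slip: the constant $\tfrac{1}{10}$ in the statement of Lemma~\ref{Lemma:DE-descent} gives $\sum_{k}\|x_k - v_k\|^2 \leq 5\|x^\star - x_0\|^2$, which is \emph{weaker} (not stronger) than the stated $4\|x^\star - x_0\|^2$; as you correctly observe, the sharper $\tfrac{1}{8}$ that the paper's own proof of Lemma~\ref{Lemma:DE-descent} actually establishes is what yields the constant $4$.
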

\begin{proof}
For any $x \in \XCal$, we have
\begin{equation*}
\ECal_0 - \ECal_T + \langle s_T, x - x_0\rangle = \ECal_0 - \left(\max_{v \in \XCal} \ \langle s_T, v - x_0\rangle - \tfrac{1}{2}\|v - x_0\|^2\right) + \langle s_T, x - x_0\rangle. 
\end{equation*}
Since $s_0 = 0_d$, we have $\ECal_0 = 0$ and 
\begin{equation*}
\ECal_0 - \ECal_T + \langle s_T, x - x_0\rangle \leq - \left(\langle s_T, x - x_0\rangle - \tfrac{1}{2}\|x - x_0\|^2\right) + \langle s_T, x - x_0\rangle = \tfrac{1}{2}\|x - x_0\|^2. 
\end{equation*}
This together with Lemma~\ref{Lemma:DE-descent} yields that 
\begin{equation*}
\sum_{k=1}^T \lambda_k \langle F(x_k), x_k - x\rangle + \tfrac{1}{8}\left(\sum_{k=1}^T \|x_k - v_k\|^2\right) \leq \tfrac{1}{2}\|x - x_0\|^2, \quad \textnormal{for all } x \in \XCal, 
\end{equation*}
which implies the first inequality. Since the VI satisfies the Minty condition (see Definition~\ref{def:Minty}), there exists $x^\star \in \XCal$ such that $\langle F(x_k), x_k - x^\star\rangle \geq 0$ for all $k \geq 1$. Letting $x = x^\star$ in the above inequality yields the second inequality. 
\end{proof}
We provide a technical lemma establishing a lower bound for $\sum_{k=1}^T \lambda_k$. 
\begin{lemma}\label{Lemma:DE-control}
Suppose that Assumption~\ref{Assumption:smooth} and the Minty condition hold true. For every integer $k \geq 1$, we have
\begin{equation*}
\sum_{k=1}^T \lambda_k \geq \tfrac{p!}{(20p-8)L}\left(\tfrac{1}{4\|x^\star - x_0\|^2}\right)^{\frac{p-1}{2}} T^{\frac{p+1}{2}}, 
\end{equation*}
where $x^\star \in \XCal$ denotes the weak solution to the VI. 
\end{lemma}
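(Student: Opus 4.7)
}

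The plan is to turn the lower bound on $\lambda_k$ from Step 4 into a lower bound on the sum $\sum_{k=1}^T \lambda_k$ by invoking the a priori bound $\sum_{k=1}^T \|x_k - v_k\|^2 \le 4 \|x^\star - x_0\|^2$ furnished by Lemma~\ref{Lemma:DE-error}. The adaptive choice in Step 4 of Algorithm~\ref{Algorithm:DE} rearranges to
\begin{equation*}
\lambda_k \;\geq\; \frac{p!}{(20p-8)\, L \, \|x_k - v_k\|^{p-1}}, \qquad k = 1, \ldots, T,
\end{equation*}
so up to the multiplicative constant $\tfrac{p!}{(20p-8)L}$ it suffices to lower bound $\sum_{k=1}^T \|x_k - v_k\|^{-(p-1)}$ under the constraint $\sum_{k=1}^T \|x_k - v_k\|^2 \le R^2$ with $R := 2\|x^\star - x_0\|$.

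The key step is a convexity argument. Writing $b_k = \|x_k - v_k\|^2$, the function $g(b) = b^{-(p-1)/2}$ is convex on $(0,\infty)$ for every $p \geq 1$ (indeed $g''(b) = \tfrac{(p-1)(p+1)}{4} b^{-(p+3)/2} \geq 0$), so Jensen's inequality yields
\begin{equation*}
\sum_{k=1}^T \|x_k - v_k\|^{-(p-1)} \;=\; \sum_{k=1}^T g(b_k) \;\geq\; T \, g\!\left(\tfrac{1}{T}\sum_{k=1}^T b_k\right) \;=\; \frac{T^{(p+1)/2}}{\bigl(\sum_{k=1}^T b_k\bigr)^{(p-1)/2}}.
\end{equation*}
Since $\sum_{k=1}^T b_k \leq R^2$ by Lemma~\ref{Lemma:DE-error}, the denominator is at most $R^{p-1}$, so $\sum_{k=1}^T \|x_k - v_k\|^{-(p-1)} \geq T^{(p+1)/2}/R^{p-1}$. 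Combining this with the lower bound on $\lambda_k$ and recalling that $R^{p-1} = (4\|x^\star - x_0\|^2)^{(p-1)/2}$ gives exactly the claimed inequality. The case $p = 1$ is trivial: both sides reduce to $T/(12 L)$.

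The only potential obstacle is degenerate terms with $\|x_k - v_k\| = 0$, which would make the bound on $\lambda_k$ vacuous (for $p \geq 2$) or the function $g$ undefined at $0$. However, if $x_k = v_k$ for some $k$, then the approximation condition~\eqref{condition:approximation} forces $\langle F_{v_k}(x_k), x - x_k\rangle \geq 0$ for all $x \in \XCal$ with $F_{v_k}(x_k) = F(v_k) = F(x_k)$, so $x_k$ is a strong (hence weak) solution and the algorithm terminates at Step~1; we may therefore assume $\|x_k - v_k\| > 0$ throughout and the Jensen estimate applies without issue.
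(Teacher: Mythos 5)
Your proof is correct and follows essentially the same route as the paper's: lower-bound $\lambda_k$ via Step 4, combine with the bound $\sum_{k=1}^T\|x_k-v_k\|^2 \le 4\|x^\star-x_0\|^2$ from Lemma~\ref{Lemma:DE-error}, and then apply a power-mean inequality; the paper packages the final step as H\"older's inequality applied to $\sum_{k=1}^T 1$ after first estimating $\sum_k \lambda_k^{-2/(p-1)}$, whereas you apply Jensen directly to $b \mapsto b^{-(p-1)/2}$, but these are the same convexity tool and give the identical constant. Your explicit treatment of the degenerate case $\|x_k - v_k\| = 0$ (showing the algorithm would have stopped at Step 1) is a sound addition that the paper leaves implicit.
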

\begin{proof}
Without loss of generality, we assume that $x_0 \neq x^\star$. For $p=1$, we have $\lambda_k = \frac{1}{12L}$ for all $k \geq 1$.  For $p \geq 2$, we have
\begin{equation*}
\sum_{k=1}^T (\lambda_k)^{-\frac{2}{p-1}}(\tfrac{p!}{(20p-8)L})^{\frac{2}{p-1}} \leq \sum_{k=1}^T (\lambda_k)^{-\frac{2}{p-1}}(\lambda_k\|x_k - v_k\|^{p-1})^{\frac{2}{p-1}} = \sum_{k=1}^T \|x_k - v_k\|^2  \overset{\text{Lemma}~\ref{Lemma:DE-error}}{\leq} 4\|x^\star - x_0\|^2.  
\end{equation*}
By the H\"{o}lder inequality, we have
\begin{equation*}
\sum_{k=1}^T 1 = \sum_{k=1}^T \left((\lambda_k)^{-\frac{2}{p-1}}\right)^{\frac{p-1}{p+1}} (\lambda_k)^{\frac{2}{p+1}} \leq \left(\sum_{k=1}^T (\lambda_k)^{-\frac{2}{p-1}}\right)^{\frac{p-1}{p+1}}\left(\sum_{k=1}^T \lambda_k\right)^{\frac{2}{p+1}}.
\end{equation*}
Putting these pieces together yields that 
\begin{equation*}
T \leq (4\|x^\star - x_0\|^2)^{\frac{p-1}{p+1}}\left(\tfrac{(20p-8)L}{p!}\right)^{\frac{2}{p+1}}\left(\sum_{k=1}^T \lambda_k\right)^{\frac{2}{p+1}},
\end{equation*}
Plugging this into the above inequality yields that 
\begin{equation*}
\sum_{k=1}^T \lambda_k \geq \tfrac{p!}{(20p-8)L}\left(\tfrac{1}{4\|x^\star - x_0\|^2}\right)^{\frac{p-1}{2}} T^{\frac{p+1}{2}}.   
\end{equation*}
This completes the proof. 
\end{proof}

\subsection{Proof of Theorem~\ref{Thm:monotone-global}}
We see from~\citet[Theorem~3.1]{Harker-1990-Finite} that at least one strong solution to the VI exists since $F$ is continuous and $\XCal$ is convex, closed and bounded. Since any strong solution is a weak solution if $F$ is further assumed to be monotone, we obtain that the VI satisfies the Minty condition. 

Letting $x \in \XCal$, we derive from the monotonicity of $F$ and the definition of $\tilde{x}_T$ (i.e., $\textsf{opt} = 0$) that
\begin{equation*}
\langle F(x), \tilde{x}_T - x\rangle = \tfrac{1}{\sum_{k=1}^T \lambda_k}\left(\sum_{k=1}^T \lambda_k \langle F(x), x_k - x\rangle\right) \leq \tfrac{1}{\sum_{k=1}^T \lambda_k}\left(\sum_{k=1}^T \lambda_k \langle F(x_k), x_k - x\rangle\right). 
\end{equation*}
Combining this inequality with the first inequality in Lemma~\ref{Lemma:DE-error} yields that
\begin{equation*}
\langle F(x), \tilde{x}_T - x\rangle \leq \tfrac{\|x - x_0\|^2}{2(\sum_{k=1}^T \lambda_k)}, \quad \textnormal{for all } x \in \XCal. 
\end{equation*}
Since $x_0 \in \XCal$, we have $\|x - x_0\| \leq D$ and hence
\begin{equation*}
\langle F(x), \tilde{x}_T - x\rangle \leq \tfrac{D^2}{2(\sum_{k=1}^T \lambda_k)}, \quad \textnormal{for all } x \in \XCal. 
\end{equation*}
Then, we combine Lemma~\ref{Lemma:DE-control} and the fact that $\|x^\star - x_0\| \leq D$ to obtain that 
\begin{equation*}
\langle F(x), \tilde{x}_T - x\rangle \leq \tfrac{2^p(5p-2)}{p!}LD^{p+1} T^{-\frac{p+1}{2}}, \quad \textnormal{for all } x \in \XCal. 
\end{equation*}
By the definition of a gap function (see Eq.~\eqref{eq:cc-gap}), we have
\begin{equation}\label{inequality:monotone-global-main}
\textsc{gap}(\tilde{x}_T) = \sup_{x \in \XCal} \ \langle F(x), \tilde{x}_T - x\rangle \leq \tfrac{2^p(5p-2)}{p!}LD^{p+1} T^{-\frac{p+1}{2}}. 
\end{equation}
Therefore, we conclude from Eq.~\eqref{inequality:monotone-global-main} that we can set 
\begin{equation*}
T = O\left(\left(\frac{LD^{p+1}}{\epsilon}\right)^{\frac{2}{p+1}}\right), 
\end{equation*}
such that $\hat{x} = \textsf{Perseus}(p, x_0, L, T, 0)$ satisfies $\textsc{gap}(\hat{x}) \leq \epsilon$. The total number of calls of the subproblem solvers is equal to $T$ since our algorithm calls the subproblem solvers once at each iteration. This completes the proof. 

\subsection{Proof of Theorem~\ref{Thm:monotone-restart}}
In the uniformly monotone setting with a convex, closed and bounded set, the solution $x^\star \in \XCal$ to the VI exists and is unique~\citep{Facchinei-2007-Finite} and the VI satisfies the Minty condition. 

We first consider the relationship between $\|\hat{x} - x^\star\|$ and $\|x_0 - x^\star\|$ where $\hat{x} = \textsf{Perseus}(p, x_0, L, T_{\textnormal{inner}}, 0)$. We derive from Jensen's inequality and the definition of $\tilde{x}_{T_{\textnormal{inner}}}$ that
\begin{equation*}
\|\tilde{x}_{T_{\textnormal{inner}}} - x^\star\|^{p+1} = \left\|\tfrac{1}{\sum_{k=1}^{T_{\textnormal{inner}}} \lambda_k}\left(\sum_{k=1}^{T_{\textnormal{inner}}} \lambda_k x_k\right) - x^\star\right\|^{p+1} \leq \tfrac{1}{\sum_{k=1}^{T_{\textnormal{inner}}} \lambda_k}\left(\sum_{k=1}^{T_{\textnormal{inner}}} \lambda_k\|x_k - x^\star\|^{p+1}\right). 
\end{equation*}
Since $F$ is $p^{\textnormal{th}}$-order $\mu$-uniformly monotone, we have
\begin{equation*}
\|x_k - x^\star\|^{p+1} \leq \tfrac{1}{\mu}\langle F(x_k) - F(x^\star), x_k - x^\star\rangle \leq \tfrac{1}{\mu}\langle F(x_k), x_k - x^\star\rangle. 
\end{equation*}
Putting these pieces together yields that 
\begin{equation}\label{inequality:monotone-restart-first}
\|\tilde{x}_{T_{\textnormal{inner}}} - x^\star\|^{p+1} \leq \tfrac{1}{\mu(\sum_{k=1}^{T_{\textnormal{inner}}} \lambda_k)}\left(\sum_{k=1}^{T_{\textnormal{inner}}} \lambda_k\langle F(x_k), x_k - x^\star\rangle\right). 
\end{equation}
Combining the first inequality in Lemma~\ref{Lemma:DE-error} with Eq.~\eqref{inequality:monotone-restart-first} yields that 
\begin{equation*}
\|\tilde{x}_{T_{\textnormal{inner}}} - x^\star\|^{p+1} \leq \tfrac{1}{2\mu(\sum_{k=1}^{T_{\textnormal{inner}}} \lambda_k)}\|x_0 - x^\star\|^2. 
\end{equation*}
This together with Lemma~\ref{Lemma:DE-control} and the fact that $\hat{x} = \tilde{x}_{T_{\textnormal{inner}}}$ yields that 
\begin{eqnarray}\label{inequality:monotone-restart-second}
\|\hat{x} - x^\star\|^{p+1} & \leq & \left((4\|x_0 - x^\star\|^2)^{\frac{p-1}{2}}\tfrac{10p-4}{p!}\tfrac{L}{\mu}t^{-\frac{p+1}{2}}\right)\|x_0 - x^\star\|^2 \\ 
& = & \left(\tfrac{2^p(5p-2)}{p!}\tfrac{L}{\mu}t^{-\frac{p+1}{2}}\right)\|x_0 - x^\star\|^{p+1}.  \nonumber
\end{eqnarray}
Since $x_{k+1} = \textsf{Perseus}(p, x_k, L, T_{\textnormal{inner}}, 0)$ in the scheme of Algorithm~\ref{Algorithm:restart} and 
\begin{equation}\label{inequality:monotone-restart-third}
T_{\textnormal{inner}} = \left\lceil \left(\tfrac{2^{p+1}(5p-2)}{p!}\tfrac{L}{\mu}\right)^{\frac{2}{p+1}}\right\rceil, 
\end{equation}
we have
\begin{equation}\label{inequality:monotone-restart-fourth}
\|x_{k+1} - x^\star\|^{p+1} \leq \tfrac{1}{2}\|x_k - x^\star\|^{p+1}, \quad \textnormal{for all } k = 0, 1, 2, \ldots, T. 
\end{equation}
Therefore, we conclude from Eq.~\eqref{inequality:monotone-restart-third} and Eq.~\eqref{inequality:monotone-restart-fourth} that we can set 
\begin{equation*}
T = O\left(\log_2\left(\frac{D}{\epsilon}\right)\right), 
\end{equation*}
such that $\hat{x} = \textsf{Perseus-restart}(p, x_0, L, \mu, T, 0)$ satisfies $\|\hat{x} - x^\star\| \leq \epsilon$. The total number of calls of the subproblem solvers is bounded by
\begin{equation*}
O\left(\left(\frac{L}{\mu}\right)^{\frac{2}{p+1}}\log_2\left(\frac{D}{\epsilon}\right)\right). 
\end{equation*}
This completes the proof. 

\subsection{Proof of Theorem~\ref{Thm:monotone-local}}
In the strongly monotone setting, the solution $x^\star \in \XCal$ to the VI exists and is unique~\citep{Facchinei-2007-Finite} and the VI satisfies the Minty condition. 

We first consider the relationship between $\|\hat{x} - x^\star\|$ and $\|x_0 - x^\star\|$ where $\hat{x} = \textsf{Perseus}(p, x_0, L, T_{\textnormal{inner}}, 1)$. Since $F$ is $\mu$-strongly monotone, we apply the same argument from the proof of Theorem~\ref{Thm:monotone-restart} and obtain that 
\begin{equation*}
\|\tilde{x}_{T_{\textnormal{inner}}} - x^\star\|^2 \leq \tfrac{1}{\mu(\sum_{k=1}^{T_{\textnormal{inner}}} \lambda_k)}\left(\sum_{k=1}^{T_{\textnormal{inner}}} \lambda_k\langle F(x_k), x_k - x^\star\rangle\right) \overset{\textnormal{Lemma~\ref{Lemma:DE-error}}}{\leq} \tfrac{1}{2\mu(\sum_{k=1}^{T_{\textnormal{inner}}} \lambda_k)}\|x_0 - x^\star\|^2. 
\end{equation*}
This together with Lemma~\ref{Lemma:DE-control} and the fact that $\hat{x} = \tilde{x}_{T_{\textnormal{inner}}}$ yields that 
\begin{equation*}
\|\hat{x} - x^\star\|^2 \leq \left(\tfrac{2^p(5p-2)}{p!}\tfrac{L}{\mu}t^{-\frac{p+1}{2}}\right)\|x_0 - x^\star\|^{p+1}. 
\end{equation*}
Since $x_{k+1} = \textsf{Perseus}(p, x_k, L, T_{\textnormal{inner}}, 1)$ in Algorithm~\ref{Algorithm:restart} and $T_{\textnormal{inner}} = 1$, we have
\begin{equation*}
\|x_{k+1} - x^\star\|^2 \leq \left(\tfrac{2^p(5p-2)\kappa}{p!}\right)\|x_k - x^\star\|^{p+1}, 
\end{equation*}
which implies that 
\begin{equation*}
\|x_{k+1} - x^\star\| \leq \sqrt{\tfrac{2^p(5p-2)\kappa}{p!}}\|x_k - x^\star\|^{\frac{p+1}{2}}. 
\end{equation*}
For the case of $p \geq 2$, we have $\frac{p+1}{2} \geq \frac{3}{2}$ and $p-1 \geq 1$. If the following condition holds true, 
\begin{equation*}
\|x_0 - x^\star\| \leq \tfrac{1}{2}\left(\tfrac{p!}{2^p(5p-2)\kappa}\right)^{\frac{1}{p-1}},
\end{equation*}
we have
\begin{eqnarray*}
\lefteqn{\left(\tfrac{2^p(5p-2)\kappa}{p!}\right)^{\frac{1}{p-1}}\|x_{k+1} - x^\star\| \leq \left(\tfrac{2^p(5p-2)\kappa}{p!}\right)^{\frac{p+1}{2(p-1)}}\|x_k - x^\star\|^{\frac{p+1}{2}}} \\
& = & \left(\left(\tfrac{2^p(5p-2)\kappa}{p!}\right)^{\frac{1}{p-1}}\|x_k - x^\star\|\right)^{\frac{p+1}{2}} \leq \left(\left(\tfrac{2^p(5p-2)\kappa}{p!}\right)^{\frac{1}{p-1}}\|x_0 - x^\star\|\right)^{(\frac{p+1}{2})^{k+1}} \leq (\tfrac{1}{2})^{(\frac{p+1}{2})^{k+1}}. 
\end{eqnarray*}
This completes the proof. 

\subsection{Proof of Theorem~\ref{Thm:nonmonotone-global}}
We see from the second inequality in Lemma~\ref{Lemma:DE-error} that
\begin{equation*}
\min_{1 \leq k \leq T} \|x_k - v_k\|^2 \leq \tfrac{1}{T}\sum_{k=1}^T \|x_k - v_k\|^2 \leq \tfrac{4\|x^\star - x_0\|^2}{T}.  
\end{equation*}
By the definition of $x_{k_T}$ (i.e., $\textsf{opt} = 2$), we have
\begin{equation}\label{inequality:nonmonotone-global-first}
\|x_{k_T} - v_{k_T}\|^2 \leq \tfrac{4\|x^\star - x_0\|^2}{T}.  
\end{equation}
Recalling that $x_{k+1} \in \XCal$ satisfies Eq.~\eqref{condition:approximation}, we have 
\begin{equation*}
\langle F_{v_k}(x_k), x - x_k\rangle \geq -\tfrac{L}{p!}\|x_k - v_k\|^{p+1},\quad \textnormal{for all } x \in \XCal, 
\end{equation*}
where $F_v(x): \br^d \rightarrow \br^d$ is defined for any fixed $v \in \XCal$ as follows:
\begin{equation*}
F_{v_k}(x) = F(v_k) + \langle \nabla F(v_k), x-v_k\rangle + \ldots + \tfrac{1}{(p-1)!}\nabla^{(p-1)} F(v_k)[x-v_k]^{p-1} + \tfrac{5L}{(p-1)!}\|x - v_k\|^{p-1}(x - v_k). 
\end{equation*}
Under Assumption~\ref{Assumption:smooth}, we have Eq.~\eqref{opt:DE-main-smooth} which further leads to
\begin{equation*}
\|F(x_k) - F_{v_k}(x_k)\| \leq \tfrac{(5p+1)L}{p!}\|x_k - v_k\|^p. 
\end{equation*}
Putting these pieces together yields that 
\begin{eqnarray*}
\lefteqn{\langle F(x_k), x_k - x\rangle = \langle F(x_k) - F_{v_k}(x_k), x_k - x\rangle + \langle F_{v_k}(x_k), x_k - x\rangle} \\
& \leq & \|F(x_k) - F_{v_k}(x_k)\|\|x_k - x\| + \tfrac{L}{p!}\|x_k - v_k\|^{p+1} \\
& \leq & \tfrac{L}{p!}\|x_k - v_k\|^p\left((5p+1)\|x_k - x\| + \|x_k - v_k\|\right),\quad \textnormal{for all } x \in \XCal. 
\end{eqnarray*}
This implies that (for all $x \in \XCal$) 
\begin{equation}\label{inequality:nonmonotone-global-second}
\langle F(x_k), x_k - x\rangle \leq \tfrac{(5p+1)L}{p!}\|x_k - v_k\|^p\|x_k - x\| + \tfrac{L}{p!}\|x_k - v_k\|^{p+1}. 
\end{equation}
Then, we derive from the fact that $\|x_k - x\| \leq D$ and $\|x_k - v_k\| \leq D$ that
\begin{equation*}
\langle F(x_k), x_k - x\rangle \leq \tfrac{(5p+2)LD}{p!}\|x_k - v_k\|^p, \quad \textnormal{for all } x \in \XCal. 
\end{equation*}
By the definition of a residue function (see Eq.~\eqref{eq:cc-residue}), we have
\begin{eqnarray*}
\textsc{res}(x_{k_T}) & = & \sup_{x \in \XCal} \ \langle F(x_{k_T}), x_{k_T} - x\rangle \leq \tfrac{(5p+2)LD}{p!}\|x_{k_T} - v_{k_T}\|^p \\ 
& \overset{\textnormal{Eq.~\eqref{inequality:nonmonotone-global-first}}}{\leq} & \tfrac{(5p+2)LD}{p!}\left(\tfrac{4\|x^\star - x_0\|^2}{T}\right)^{\frac{p}{2}}. 
\end{eqnarray*}
Since $x_0, x^\star \in \XCal$, we have $\|x^\star - x_0\| \leq D$ and hence
\begin{equation}\label{inequality:nonmonotone-global-third}
\textsc{res}(x_{k_T}) \leq \tfrac{2^p(5p+2)}{p!}LD^{p+1}T^{-\frac{p}{2}}. 
\end{equation}
Therefore, we conclude from Eq.~\eqref{inequality:nonmonotone-global-third}that we can set 
\begin{equation*}
T = O\left(\left(\frac{LD^{p+1}}{\epsilon}\right)^{\frac{2}{p}}\right), 
\end{equation*}
such that $\hat{x} = \textsf{Perseus}(p, x_0, L, T, 1)$ satisfies $\textsc{res}(\hat{x}) \leq \epsilon$. The total number of calls of the subproblem solvers is equal to $T$ since our algorithm calls the subproblem solvers once at each iteration. This completes the proof. 

\subsection{Proof of Theorem~\ref{Thm:monotone-lower-bound}}\label{subsec:proof_lowerbound}
We first construct a hard function instance for any $p^\textnormal{th}$-order method that satisfies Assumption~\ref{Assumption:linear-span}. The basic function that we will use is as follows:
\begin{equation*}
\eta(z, y) = \tfrac{1}{p} \sum_{i=1}^d (z^{(i)})^p \cdot y^{(i)}, \quad z \in \br_+^d, \ y \in \br_+^d. 
\end{equation*}
Fixing $(z, y) \in \br_+^d \times \br_+^d$, $(h_1, h_2) \in \br^d \times \br^d$ and $1 \leq m + n \leq p$, we have
\begin{equation}\label{Func:gradient-form}
\nabla^{(m, n)} \eta(z, y)[h_1]^m [h_2]^n = \tfrac{(p-1)!}{(p-m)!} \cdot \begin{cases}
\sum_{i=1}^d (z^{(i)})^{p-m} y^{(i)} (h_1^{(i)})^m, & \textnormal{if } n = 0, \\
\sum_{i=1}^d (z^{(i)})^{p-m} (h_1^{(i)})^m h_2^{(i)}, & \textnormal{if } n = 1, \\
0, & \textnormal{otherwise}. 
\end{cases}
\end{equation}
Note that $T \geq 1$ is an integer-valued parameter and $d \geq 4T+1$. We now define the following $4T \times 4T$ triangular matrix with two nonzero diagonals~\citep{Nesterov-2021-Implementable}: 
\begin{equation*}
U = \begin{bmatrix} 1 & -1 & 0 & \cdots & 0 \\ 0 & 1 & -1 & \cdots & 0 \\ & \cdots & \cdots & \cdots & \\ 0 & 0 & \cdots & 1 & -1 \\ 0 & 0 & \cdots & 0 & 1 \end{bmatrix},  \quad
U^{-1} = \begin{bmatrix} 1 & 1 & 1 & \cdots & 1 \\ 0 & 1 & 1 & \cdots & 1 \\ & \cdots & \cdots & \cdots & \\ 0 & 0 & \cdots & 1 & 1 \\ 0 & 0 & \cdots & 0 & 1 \end{bmatrix}, \quad
U^\top = \begin{bmatrix} 1 & 0 & \cdots & 0 & 0 \\ -1 & 1 & \cdots & 0 & 0 \\ & \cdots & \cdots & \cdots & \\ 0 & 0 & \cdots & 1 & 0 \\ 0 & 0 & \cdots & -1 & 1 \end{bmatrix}. 
\end{equation*}
Now, we introduce $d \times d$ upper triangular matrix $A$ with the following structure: 
\begin{equation*}
A = \begin{bmatrix} U & 0 \\ 0 & I_{d-4T} \end{bmatrix}. 
\end{equation*}
We are now ready to characterize a novel hard function and the corresponding two constraint sets: 
\begin{align*}
& f(z, y) = \tfrac{L}{2^{p+1} p!}\left(\eta(Az, y) - \tfrac{1}{p(p+1)} \sum_{i=2}^{4T} (y^{(i)})^{p+1} - (z^{(1)} - 4T + \tfrac{1}{p}) \cdot y^{(1)}\right), \\
& \ZCal = \left\{z \in \br^d: 
\begin{array}{c}
0 \leq z^{(i)} \leq 4T-i+1 \textnormal{ and } z^{(i+1)} \leq z^{(i)} \textnormal{ for all } 1 \leq i \leq 4T \\ 
\textnormal{ and } z^{(i)} = 0 \textnormal{ for all } i > 4T 
\end{array}\right\}, \\
& \YCal = \{y \in \br^d: 0 \leq y^{(i)} \leq 1 \textnormal{ for all } 1 \leq i \leq 4T \textnormal{ and } y^{(i)} = 0 \textnormal{ for all } i > 4T\}. 
\end{align*}
If $m+n=p$, Eq.~\eqref{Func:gradient-form} implies that the only nonzero $(m, n)^{\textnormal{th}}$-order derivatives of $f$ are $\nabla^{(0, p)} f(z, y)$, $\nabla^{(p-1, 1)} f(z, y)$ and $\nabla^{(p, 0)} f(z, y)$. It is also clear that the function $f: \ZCal \times \YCal \rightarrow \br$ is convex in $z$ and concave in $y$. This implies that the computation of an optimal saddle-point solution of $f(z, y)$ is equivalent to solving a monotone VI with $x = \begin{bmatrix} z \\ y \end{bmatrix}$ and
\begin{equation*}
F(x) = \begin{bmatrix} \nabla^{(1, 0)} f(z, y) \\ - \nabla^{(0, 1)} f(z, y) \end{bmatrix} = \tfrac{L}{2^{p+1} p!} \cdot \begin{bmatrix} A^\top \nabla^{(1, 0)} \eta(Az, y) - y^{(1)} \cdot e_d^{(1)} \\ -\nabla^{(0, 1)} \eta(Az, y) + \tfrac{1}{p} \sum_{i=2}^{4T} (y^{(i)})^p \cdot e_d^{(i)} + (z^{(1)} - 4T + \tfrac{1}{p}) \cdot e_d^{(1)} \end{bmatrix}. 
\end{equation*}

\paragraph{Step 1.} We show that $F: \ZCal \times \YCal \rightarrow \br^{2d}$ is $(p-1)^{\textnormal{th}}$-order smooth with a Lipschitz constant $L > 0$. Indeed, we have
\begin{eqnarray}\label{Func:smooth}
\lefteqn{\|\nabla^{(p-1)} F(x) - \nabla^{(p-1)} F(x')\|_\op \leq \|\nabla^{(0, p)} f(z, y) - \nabla^{(0, p)} f(z', y')\|_\op} \\
& & + p\|\nabla^{(p-1, 1)} f(z, y) - \nabla^{(p-1, 1)} f(z', y')\|_\op + \|\nabla^{(p, 0)} f(z, y) - \nabla^{(p, 0)} f(z', y')\|_\op.  \nonumber
\end{eqnarray}
We let $h = (h_1, h_2) \in \br^d \times \br^d$ and have
\begin{equation*}
\nabla^{(0, p)} f(z, y)[h_2]^p = - \tfrac{L}{2^{p+1} p} \cdot \sum_{i=2}^{4T} (y^{(i)})(h_2^{(i)})^p, 
\end{equation*}
and 
\begin{equation*}
\nabla^{(p-1, 1)} f(z, y)[h_1]^{p-1} [h_2] = \begin{cases}
\tfrac{L}{16} \cdot (\nabla^{(1, 1)} \eta(Az, y)[Ah_1][h_2] - h_1^{(1)}h_2^{(1)}), & \textnormal{if } p = 2, \\ 
\tfrac{L}{2^{p+1} p!} \cdot (\nabla^{(p-1, 1)} \eta(Az, y)[Ah_1]^{p-1} [h_2]),  & \textnormal{otherwise}, 
\end{cases}
\end{equation*}
and
\begin{equation*}
\nabla^{(p, 0)} f(z, y)[h_1]^p = \tfrac{L}{2^{p+1} p!} \cdot \nabla^{(p, 0)} \eta(Az, y)[Ah_1]^p.  
\end{equation*}
By the Cauchy-Schwartz inequality and $\|A\| \leq 2$ (see~\citet[Eq. (4.2)]{Nesterov-2021-Implementable}), we have 
\begin{eqnarray*}
\lefteqn{\|\nabla^{(0, p)} f(z, y) - \nabla^{(0, p)} f(z', y')\|_\op} \\
& \leq & \sup_{\|h\| = 1} \left\{ \|\nabla^{(0, p)} f(z, y)[h_2]^p - \nabla^{(0, p)} f(z', y')[h_2]^p\|\right\} \\
& \leq & \sup_{\|h\| = 1} \left\{\tfrac{L}{2^{p+1} p} \cdot \|y - y'\|\|h_2\|^p\right\} \leq \tfrac{L}{2^{p+1} p} \cdot \|x - x'\| \leq \tfrac{L}{16} \cdot \|x - x'\|, 
\end{eqnarray*}
and 
\begin{eqnarray*}
\lefteqn{\|\nabla^{(p-1, 1)} f(z, y) - \nabla^{(p-1, 1)} f(z', y')\|_\op} \\
& \leq & \sup_{\|h\| = 1} \left\{ \|\nabla^{(p-1, 1)} f(z, y)[h_1]^{p-1}[h_2] - \nabla^{(p-1, 1)} f(z', y')[h_1]^{p-1}[h_2]\|\right\} \\
& \leq & \sup_{\|h\| = 1} \left\{ \tfrac{L}{2^{p+1} p!} \cdot \left(\|\nabla^{(p-1, 1)} \eta(Az, y)[Ah_1]^{p-1}[h_2] - \nabla^{(p-1, 1)} \eta(Az', y')[Ah_1]^{p-1}[h_2]\|\right)\right\} \\
& \overset{\textnormal{Eq.~\eqref{Func:gradient-form} and } \|A\| \leq 2}{\leq} & \sup_{\|h\| = 1} \left\{\tfrac{L}{2^{p+1} p} \cdot 2^p \cdot \|z - z'\|\|h_1\|^{p-1}\|h_2\|\right\} \leq \tfrac{L}{2p} \cdot \|x - x'\|, 
\end{eqnarray*}
and 
\begin{eqnarray*}
\lefteqn{\|\nabla^{(p, 0)} f(z, y) - \nabla^{(p, 0)} f(z', y')\|_\op} \\
& \leq & \sup_{\|h\| = 1} \left\{ \|\nabla^{(p, 0)} f(z, y)[h_1]^p - \nabla^{(p, 0)} f(z', y')[h_1]^p\|\right\} \\
& \leq & \sup_{\|h\| = 1} \left\{ \tfrac{L}{2^{p+1} p!} \cdot \left(\|\nabla^{(p, 0)} \eta(Az, y)[Ah_1]^p - \nabla^{(p, 0)} \eta(Az', y')[Ah_1]^p\|\right)\right\} \\
& \overset{\textnormal{Eq.~\eqref{Func:gradient-form} and } \|A\| \leq 2}{\leq} & \sup_{\|h\| = 1} \left\{\tfrac{L}{2^{p+1} p} \cdot 2^p \cdot \|y - y'\|\|h_1\|^p\right\} \leq \tfrac{L}{2p} \cdot \|x - x'\| \leq \tfrac{L}{4} \cdot \|x - x'\|. 
\end{eqnarray*}
Plugging the above equation into Eq.~\eqref{Func:smooth} yields the desired result. 

\paragraph{Step 2.} We show that there exists an optimal solution $x_\star = (z_\star, y_\star) \in \ZCal \times \YCal$ such that $F(x_\star) = \textbf{0}_{2d}$ and compute the optimal value of $f(z_\star, y_\star)$. By the definition, we have $F(x_\star) = \textbf{0}_{2d}$ is equivalent to the following statement: 
\begin{equation}\label{Func:opt}
\begin{cases}
A^\top \nabla^{(1, 0)} \eta(Az_\star, y_\star) - y_\star^{(1)} \cdot e_d^{(1)} = \textbf{0}_d, \\
\nabla^{(0, 1)} \eta(Az_\star, y_\star) - \tfrac{1}{p} \sum_{i=2}^{4T} (y_\star^{(i)})^p \cdot e_d^{(i)} - (z_\star^{(1)} - 4T + \tfrac{1}{p}) \cdot e_d^{(1)} = \textbf{0}_d.
\end{cases}
\end{equation}
Note that 
\begin{align*}
\nabla^{(1, 0)} \eta(Az_\star, y_\star) & = \sum_{i=1}^d ((Az_\star)^{(i)})^{p-1} y_\star^{(i)} e_d^{(i)}, \\
\nabla^{(0, 1)} \eta(Az_\star, y_\star) & = \tfrac{1}{p}\left(\sum_{i=1}^d ((Az_\star^{(i)}))^p e_d^{(i)}\right). 
\end{align*}
We claim that an optimal solution $x_\star = (z_\star, y_\star)$ is given by 
\begin{equation}\label{Sol:opt}
z_\star^{(i)} = \begin{cases} 4T - i + 1, & \textnormal{if } 1 \leq i \leq 4T, \\ 0 & \textnormal{otherwise}. \end{cases} \qquad 
y_\star^{(i)} = \begin{cases} 1, & \textnormal{if } 1 \leq i \leq 4T, \\ 0 & \textnormal{otherwise}. \end{cases}
\end{equation}
Indeed, we can see from the definition of $\ZCal \times \YCal$ that $(z_\star, y_\star) \in \ZCal \times \YCal$ and the definition of $A$ that 
\begin{equation*}
(Az_\star)^{(i)} = \begin{cases} 1, & \textnormal{if } 1 \leq i \leq 4T, \\ 0 & \textnormal{otherwise}. \end{cases}
\end{equation*}
This implies that 
\begin{equation*}
\nabla^{(1, 0)} \eta(Az_\star, y_\star) = \sum_{i=1}^{4T} e_d^{(i)}, \quad \nabla^{(0, 1)} \eta(Az_\star, y_\star) = \tfrac{1}{p}\left(\sum_{i=1}^{4T} e_d^{(i)}\right). 
\end{equation*}
By the definition of $A$, we have $A^\top \nabla^{(1, 0)} \eta(Az_\star, y_\star) = e_d^{(1)}$. As such, we can verify that Eq.~\eqref{Func:opt} holds true. As such, we conclude that the optimal solution $x_\star = (z_\star, y_\star)$ defined in Eq.~\eqref{Sol:opt} belongs to $\ZCal \times \YCal$ and the optimal value is
\begin{eqnarray*}
f(z_\star, y_\star) & = & \tfrac{L}{2^{p+1} p!} \cdot \left(\eta(Az_\star, y_\star) - \tfrac{1}{p(p+1)} \sum_{i=2}^{4T} (y_\star^{(i)})^{p+1} - (z_\star^{(1)} - 4T + \tfrac{1}{p}) \cdot y_\star^{(1)}\right) \\ 
& = & \tfrac{L}{2^{p+1}(p+1)!} (4T-1). 
\end{eqnarray*}
This implies the desired result. 
\paragraph{Step 3.} We now proceed to investigate the dynamics of any $p^{\textnormal{th}}$-order method under Assumption~\ref{Assumption:linear-span}. For simplicity, we denote 
\begin{equation*}
\br_k^d = \{z \in \br^d: z^{(i)} = 0 \textnormal{ for all } i = k+1, k+2, \ldots, d\}, \quad \textnormal{for all } 1 \leq k \leq d-1. 
\end{equation*}
Without loss of generality, we assume that $x_0 = \textbf{0}_{2d}$ is the initial iterate. Then, we show that the iterates $\{(z_k, y_k)\}_{k \geq 0}$ generated by any $p^{\textnormal{th}}$-order method under Assumption~\ref{Assumption:linear-span} satisfy 
\begin{equation}\label{Alg:chain-like}
z_k \in \br_{2k}^d \cap \ZCal,  \quad \textnormal{for all } 1 \leq k \leq T. 
\end{equation}
It is clear that $z_k \in \ZCal$ for all $1 \leq k \leq T$ since Assumption~\ref{Assumption:linear-span} guarantees that $\{(z_k, y_k)\}_{k \geq 1} \subseteq \XCal$. Thus, it suffices to show that $z_k \in \br_{2k}^d$ for all $1 \leq k \leq T$. 

First of all, we prove that $\bar{x} = (\bar{z}, \bar{y}) \in \br_k^d \times \br_k^d$ with $1 \leq k \leq 2T-1$ implies that $\nabla^{(j)} F(\bar{x})[h]^j \in \br_{k+1}^d \times \br_{k+1}^d$ for all $0 \leq j \leq p-1$ and any vector $h \in \br^{2d}$. In particular, we have $A$ is an upper triangular matrix and $\bar{z} \in \br_k^d$. Thus, $A\bar{z} \in \br_k^d$. In addition, we have $\bar{y} \in \br_k^d$. Recall that we have derived in Eq.~\eqref{Func:gradient-form} that
\begin{equation*}
\nabla^{(m, n)} \eta(z, y)[h_1]^m [h_2]^n = \tfrac{(p-1)!}{(p-m)!} \cdot \begin{cases}
\sum_{i=1}^d (z^{(i)})^{p-m} (h_1^{(i)})^m h_2^{(i)}, & \textnormal{if } n = 1, \\
\sum_{i=1}^d (z^{(i)})^{p-m} y^{(i)} (h_1^{(i)})^m, & \textnormal{if } n = 0, \\
0, & \textnormal{otherwise}. 
\end{cases}
\end{equation*}
Putting these pieces together yields
\begin{eqnarray*}
\tfrac{\partial}{\partial h_1}(\nabla^{(m, n)} \eta(A\bar{z}, \bar{y})[Ah_1]^m [h_2]^n) & = & \sum_{i=1}^k c_i^{(m, n)} A^\top e_d^{(i)}, \\ 
\tfrac{\partial}{\partial h_2}(\nabla^{(m, n)} \eta(A\bar{z}, \bar{y})[h_1]^m [h_2]^{n-1}) & = & \sum_{i=1}^k d_i^{(m, n)} e_d^{(i)}, 
\end{eqnarray*}
where $c_i^{(m, n)}$ and $d_i^{(m, n)}$ are certain coefficients for $1 \leq m + n \leq p$ and $1 \leq i \leq k$ (these parameters are defined in an implicit form as in~\cite{Nesterov-2021-Implementable}). Thus, we have
\begin{eqnarray*}
\tfrac{\partial}{\partial h_1}(\nabla^{(m, n)} \eta(A\bar{z}, \bar{y})[Ah_1]^m [h_2]^n) & \in & \br_{k+1}^d, \\
\tfrac{\partial}{\partial h_2}(\nabla^{(m, n)} \eta(A\bar{z}, \bar{y})[h_1]^m [h_2]^{n-1}) & \in & \br_k^d \subseteq \br_{k+1}^d. 
\end{eqnarray*}
Let us compute $F(\bar{x})$ and $\nabla F(\bar{x})[h]$ explicitly. We have 
\begin{equation*}
F(\bar{x}) = \tfrac{L}{2^{p+1} p!} \cdot \begin{bmatrix}  
\tfrac{\partial}{\partial h_1}(\nabla^{(1, 0)} \eta(A\bar{z}, \bar{y})[Ah_1])  - \bar{y}^{(1)} e_d^{(1)} \\ 
- \tfrac{\partial}{\partial h_2}(\nabla^{(0, 1)} \eta(A\bar{z}, \bar{y})[h_2]) + \tfrac{1}{p} \sum_{i=2}^{2T} (\bar{y}^{(i)})^p \cdot e_d^{(i)} + (\bar{z}^{(1)} - 2T + \tfrac{1}{p})e_d^{(1)} 
\end{bmatrix}, 
\end{equation*}
and
\begin{equation*}
\nabla F(\bar{x})[h] = \tfrac{L}{2^{p+1} p!} \cdot \begin{bmatrix} 
\tfrac{\partial}{\partial h_1}(\nabla^{(2, 0)} \eta(A\bar{z}, \bar{y})[Ah_1]^2) + \tfrac{\partial}{\partial h_2}(\nabla^{(1, 1)} \eta(A\bar{z}, \bar{y})[Ah_1][h_2]) - h_2^{(1)} e_d^{(1)} \\ 
- \tfrac{\partial}{\partial h_1}(\nabla^{(1, 1)} \eta(A\bar{z}, \bar{y})[Ah_1][h_2]) + h_1^{(1)} e_d^{(1)} - \tfrac{\partial}{\partial h_2}(\nabla^{(0, 2)} \eta(A\bar{z}, \bar{y})[h_2]^2) + \sum_{i=2}^{2T} (\bar{y}^{(i)})^{p-1} h_2^{(i)} e_d^{(i)} \end{bmatrix}. 
\end{equation*}
This together with 
\begin{equation*}
\tfrac{\partial}{\partial h_1}(\nabla^{(m, n)} \eta(A\bar{z}, \bar{y})[Ah_1]^m [h_2]^n),  \ \tfrac{\partial}{\partial h_2}(\nabla^{(m, n)} \eta(A\bar{z}, \bar{y})[h_1]^m [h_2]^{n-1}) \in \br_{k+1}^d
\end{equation*}
yields $F(\bar{x}), \nabla F(\bar{x})[h] \in \br_{k+1}^d \times \br_{k+1}^d$ for any $h \in \br^{2d}$. Similarly, we have 
\begin{equation*}
\nabla^{(j)} F(\bar{x})[h]^j \in \br_{k+1}^d \times \br_{k+1}^d, \quad \textnormal{for all } 0 \leq j \leq p-1 \textnormal{ and any } h \in \br^{2d}. 
\end{equation*}
Second, we prove that $x_k = (z_k, y_k) \in \br_k^d \times \br_k^d$ with $1 \leq k \leq 2T-1$ and $\nabla^{(j)} F(\bar{x})[h]^j \in \br_{k+1}^d \times \br_{k+1}^d$ for all $0 \leq j \leq p-1$ and any vector $h \in \br^{2d}$ implies that $x_{k+1} = (z_{k+1}, y_{k+1}) \in \br_{k+1}^d \times \br_{k+1}^d$. Indeed, we recall from Assumption~\ref{Assumption:linear-span} that: for all $k \geq 0$, we have that $x_{k+1} \in \XCal$ satisfies that $\langle \Phi_{a, \gamma, \bar{x}}(x_{k+1} - x_k), x - x_{k+1} \rangle \geq 0$ for all $x \in \XCal$, where $\Phi_{a, \gamma, \bar{x}}(\cdot)$ is defined by  
\begin{equation*}
\Phi_{a, \gamma, \bar{x}}(h) = a_0 F(\bar{x}) + \sum_{i=1}^{p-1} a_i \nabla^{(i)} F(\bar{x})[h]^i + \gamma\|h\|^{p-1}h. 
\end{equation*}
Since $\nabla^{(j)} F(\bar{x})[h]^j \in \br_{k+1}^d \times \br_{k+1}^d$ for all $0 \leq j \leq p-1$ and any vector $h \in \br^{2d}$, we have $g := a_0 F(\bar{x}) + \sum_{i=1}^{p-1} a_i \nabla^{(i)} F(\bar{x})[h]^i \in \br_{k+1}^d \times \br_{k+1}^d$. This implies that $x_{k+1} \in \XCal$ satisfies that 
\begin{equation}\label{Alg:subprob}
\langle g + \gamma\|x_{k+1} - x_k\|^{p-1}(x_{k+1} - x_k), x - x_{k+1} \rangle \geq 0 \textnormal{ for all } x \in \XCal, 
\end{equation}
where $x_k \in \br_k^d \times \br_k^d$, $g \in \br_{k+1}^d \times \br_{k+1}^d$ and $\gamma > 0$. Here, $\XCal = \ZCal \times \YCal$ where
\begin{align*}
& \ZCal = \left\{z \in \br^d: 
\begin{array}{c}
0 \leq z^{(i)} \leq 4T-i+1 \textnormal{ and } z^{(i+1)} \leq z^{(i)} \textnormal{ for all } 1 \leq i \leq 4T \\ 
\textnormal{ and } z^{(i)} = 0 \textnormal{ for all } i > 4T 
\end{array}\right\}, \\
& \YCal = \{y \in \br^d: 0 \leq y^{(i)} \leq 1 \textnormal{ for all } 1 \leq i \leq 4T \textnormal{ and } y^{(i)} = 0 \textnormal{ for all } i > 4T\}. 
\end{align*}
We claim that $x_{k+1} = (z_{k+1}, y_{k+1}) \in \br_{k+1}^d \times \br_{k+1}^d$ and consider using a proof by contradiction. Indeed, by definition, we have $x_{k+1} \in \XCal = \ZCal \times \YCal$. Thus, if $x_{k+1} = (z_{k+1}, y_{k+1}) \notin \br_{k+1}^d \times \br_{k+1}^d$, we have that either $z_{k+1} \notin \br_{k+1}^d$ or $y_{k+1} \notin \br_{k+1}^d$. We study these two cases separately as follows: 

For the former case, we have $z_{k+1}^{(i+1)} \leq z_{k+1}^{(i)}$ for all $1 \leq i \leq 2T$. This together with $z_{k+1} \notin \br_{k+1}^d$ implies that there must exist $j \geq k+2$ such that $z_{k+1}^{(j)} > 0$ and $z_{k+1}^{(j+1)} = 0$. Since $x_k \in \br_k^d \times \br_k^d$ and $g \in \br_{k+1}^d \times \br_{k+1}^d$, the $j^\textnormal{th}$ element of $g + \gamma\|x_{k+1} - x_k\|^{p-1}(x_{k+1} - x_k)$ is strictly positive. We can let $x = x_{k+1}$ except for the $j^\textnormal{th}$ element being 0. Then, it is clear that $x \in \XCal$ and 
\begin{equation*}
\langle g + \gamma\|x_{k+1} - x_k\|^{p-1}(x_{k+1} - x_k), x - x_{k+1} \rangle < 0, 
\end{equation*}
which violates Eq.~\eqref{Alg:subprob} and thus contradicts the definition of $x_{k+1}$. The similar argument is valid for the latter case where $y_{k+1} \notin \br_{k+1}^d$ is assumed and leads to the same contradiction. Putting these pieces yields the desired result. 

Finally, we prove that $x_k \in \br_{2k}^d \times \br_{2k}^d$ for all $0 \leq k \leq T$ using an inductive argument. For the case of $k = 0$, we have $x_0 = \textbf{0}_{2d} \in \br_0^d \times \br_0^d$. For the case of $k=1$, we have $F(x_0) \in \br_1^d \times \br_1^d$ which further implies that $s \in \textnormal{Lin}(F(x_0)) \subseteq \br_1^d \times \br_1^d$ and $\bar{x} = \proj_\XCal(x_0 + s) \in \br_1^d \times \br_1^d$. Then, we have $\nabla^{(j)} F(\bar{x})[h]^j \in \br_2^d \times \br_2^d$ for all $0 \leq j \leq p-1$ and any vector $h \in \br^{2d}$. This together with $x_0 \in \br_1^d \times \br_1^d$ guarantees that $x_1 \in \br_2^d \times \br_2^d$. The same argument holds for the case of $k=2$. Indeed, $F(x_0) \in \br_1^d \times \br_1^d$ and $F(x_1) \in \br_3^d \times \br_3^d$. Thus, we have $s \in \textnormal{Lin}(F(x_0), F(x_1)) \subseteq \br_3^d \times \br_3^d$ and $\bar{x} \in \br_3^d \times \br_3^d$. This implies that $\nabla^{(j)} F(\bar{x})[h]^j \in \br_4^d \times \br_4^d$ for all $0 \leq j \leq p-1$ and any vector $h \in \br^{2d}$. Since $x_1 \in \br_3^d \times \br_3^d$, we have $x_2 \in \br_4^d \times \br_4^d$. Repeating these arguments yields that $x_k \in \br_{2k}^d \times \br_{2k}^d$ for all $1 \leq k \leq T$. Thus, $z_k \in \br_{2k}^d \cap \ZCal$ for all $1 \leq k \leq T$. 

\paragraph{Step 4.} We now compute a lower bound on $\max_{y \in \YCal} f(z_k, y)$. Eq.~\eqref{Alg:chain-like} in \textbf{Step 3} implies that 
\begin{equation*}
\max_{y \in \YCal} f(z_k, y) \geq \min_{z \in \br_{2k}^d \cap \ZCal} \max_{y \in \YCal} f(z, y) \geq \min_{z \in \br_{2T}^d \cap \ZCal} \max_{y \in \YCal} f(z, y), \quad \textnormal{for all } 0 \leq k \leq T. 
\end{equation*}
We claim that $\min_{z \in \br_{2T}^d \cap \ZCal} \max_{y \in \YCal} f(z_k, y) \geq \frac{L}{2^{p+1} p!}(2T + \frac{2T-1}{p+1})$. We let $\WCal = \{w \in \br^d: w^{(i)} \geq 0 \textnormal{ for all } 1 \leq i \leq 4T\textnormal{ and } w^{(i)} = 0 \textnormal{ for all } i > 4T\}$ and derive that
\begin{eqnarray*}
& & \min_{z \in \br_{2T}^d \cap \ZCal} \max_{y \in \YCal} \ f(z, y) \\
& \overset{w = Az}{\geq} & \min_{w \in \br_{2T}^d \cap \WCal} \max_{y \in \YCal} \ \tfrac{L}{2^{p+1} p!}\left(\eta(w, y) - \tfrac{1}{p(p+1)} \sum_{i=2}^{4T} (y^{(i)})^{p+1} - \left(\sum_{i=1}^d w^{(i)} - 4T + \tfrac{1}{p}\right) \cdot y^{(1)}\right) \\
& = & \min_{w \in \br_{2T}^d \cap \WCal} \max_{y \in \YCal} \ \tfrac{L}{2^{p+1} p!}\left(\tfrac{1}{p} \sum_{i=1}^d (w^{(i)})^p \cdot y^{(i)} - \tfrac{1}{p(p+1)} \sum_{i=2}^{4T} (y^{(i)})^{p+1} - \left(\sum_{i=1}^d w^{(i)} - 4T + \tfrac{1}{p}\right) \cdot y^{(1)}\right).
\end{eqnarray*}
We see from $w \in \br_{2T}^d \cap \WCal$ that $w^{(i)} \geq 0$ for all $1 \leq i \leq 2T$ and $w^{(i)} = 0$ for all $2T+1 \leq i \leq 4T$. Fixing $w \in \br_{2T}^d \cap \WCal$, we have
\begin{eqnarray*}
& & \max_{y \in \YCal} \ \left(\tfrac{1}{p} \sum_{i=1}^d (w^{(i)})^p \cdot y^{(i)} - \tfrac{1}{p(p+1)} \sum_{i=2}^{4T} (y^{(i)})^{p+1} - \left(\sum_{i=1}^d w^{(i)} - 4T + \tfrac{1}{p}\right) \cdot y^{(1)}\right) \\
& = & \max\left\{\tfrac{1}{p}(w^{(1)})^p - \left(\sum_{i=1}^{2T} w^{(i)} - 4T + \tfrac{1}{p}\right), 0\right\} + \tfrac{1}{p} \sum_{i=2}^{2T} (w^{(i)})^p \cdot \min\{w^{(i)}, 1\} - \tfrac{1}{p(p+1)} \sum_{i=2}^{2T} (\min\{w^{(i)}, 1\})^{p+1}.
\end{eqnarray*}
The key observation is that the second and third terms are independent of $w^{(1)}$ on the right-hand side. We also have
\begin{eqnarray*}
\lefteqn{\min_{w^{(1)} \geq 0} \max\left\{\tfrac{1}{p}(w^{(1)})^p - \left(\sum_{i=1}^{2T} w^{(i)} - 4T + \tfrac{1}{p}\right), 0\right\}} \\
& = & \min_{w^{(1)} \geq 0} \max\left\{\tfrac{1}{p}(w^{(1)})^p - w^{(1)} - \sum_{i=2}^{2T} w^{(i)} + 4T - \tfrac{1}{p}, 0\right\} \ \geq \ \max\left\{4T - 1 - \sum_{i=2}^{2T} w^{(i)}, 0\right\}. 
\end{eqnarray*}
For simplicity, we define the function $g(w)$ as follows, 
\begin{equation*}
g(w) = \max\left\{4T - 1 - \sum_{i=2}^{2T} w^{(i)}, 0\right\} + \tfrac{1}{p} \sum_{i=2}^{2T} (w^{(i)})^p \cdot \min\{w^{(i)}, 1\} - \tfrac{1}{p(p+1)} \sum_{i=2}^{2T} (\min\{w^{(i)}, 1\})^{p+1}. 
\end{equation*}
Since the function $g(\cdot)$ is convex and symmetric, we have that $\min_{w \in \br_{2T}^d \cap \WCal} g(w)$ is achieved by the point with the same value of $w^{(i)}$ for all $2 \leq i \leq 2T$. Then, it suffices to solve the following one-dimensional optimization problem: 
\begin{equation*}
\min_{\eta \geq 0} \ h(\eta) = \max\{4T - 1 - \eta(2T - 1), 0\} + \tfrac{2T-1}{p} \eta^p \cdot \min\{\eta, 1\} - \tfrac{2T-1}{p(p+1)} (\min\{\eta, 1\})^{p+1}. 
\end{equation*}
For the case of $0 \leq \eta \leq 1$, we have 
\begin{equation*}
h(\eta) = 4T - 1 - \eta(2T - 1) + \tfrac{2T-1}{p+1} \eta^{p+1} \geq 2T + \tfrac{2T-1}{p+1}. 
\end{equation*}
For the case of $1 \leq \eta \leq \frac{4T-1}{2T-1}$, we have 
\begin{equation*}
h(\eta) = 4T - 1 - \eta(2T - 1) + \tfrac{2T-1}{p} \eta^p - \tfrac{2T-1}{p(p+1)} \geq 2T + \tfrac{2T-1}{p+1}. 
\end{equation*}
For the case of $\eta \geq \frac{4T-1}{2T-1}$, we have $\eta^p \geq 1 + p(\eta-1) \geq 1 + \frac{2Tp}{2T-1}$. Then, we have
\begin{equation*}
h(\eta) = \tfrac{2T-1}{p} \eta^p - \tfrac{2T-1}{p(p+1)} \geq \tfrac{2T-1}{p}\left(1 + \tfrac{2Tp}{2T-1}\right) - \tfrac{2T-1}{p(p+1)} \geq 2T + \tfrac{2T-1}{p+1}.  
\end{equation*}
Putting these pieces together yields that 
\begin{equation*}
\min_{z \in \br_{2T}^d \cap \ZCal} \max_{y \in \YCal} \ f(z, y) \geq \tfrac{L}{2^{p+1} p!}\left(2T + \tfrac{2T-1}{p+1}\right),
\end{equation*}
which implies the desired result. 

\paragraph{Final Step.} Since the point $(z_\star, y_\star) \in \ZCal \times \YCal$ is an optimal saddle-point solution, we have 
\begin{equation*}
\max_{y \in \YCal} f(z_k, y) - \min_{z \in \ZCal} f(z, y_k) \geq \max_{y \in \YCal} f(z_k, y) - f(z_\star, y_\star). 
\end{equation*}
Combining the results from \textbf{Step 2} and \textbf{Step 4}, we have
\begin{equation*}
\min_{0 \leq k \leq T}\left\{\max_{y \in \YCal} f(z_k, y) - \min_{z \in \ZCal} f(z, y_k)\right\} \geq \tfrac{L}{2^{p+1} p!}(2T - \tfrac{2T}{p+1}) \overset{p \geq 2}{\geq} \tfrac{2TL}{2^p(p+1)!}. 
\end{equation*} 
Note that we set $D_\ZCal = 8T^{3/2}$ and $D_\YCal = 2\sqrt{T}$ (cf. the definition of $\ZCal$ and $\YCal$) and have $D_\ZCal D_\YCal^p = 2^{p+3}T^{(p+3)/2}$. Then, we have
\begin{equation*}
\min_{0 \leq k \leq T}\left\{\max_{y \in \YCal} f(z_k, y) - \min_{z \in \ZCal} f(z, y_k)\right\} \geq \left(\tfrac{1}{4^{p+2}(p+1)!}\right)LD_\ZCal D_\YCal^p T^{-\frac{p+1}{2}}. 
\end{equation*}
This completes the proof. 

\section{Conclusions}\label{sec:conclu}
We have proposed and analyzed a new $p^{\textnormal{th}}$-order method---\textsf{Perseus}---for finding a weak solution of smooth and monotone variational inequalities (VIs) when $F$ is $(p-1)^{\textnormal{th}}$-order $L$-smooth. All of our theoretical results are based on the standard assumption that the subproblem arising from a $(p-1)^{\textnormal{th}}$-order Taylor expansion of $F$ can be computed approximately in an efficient manner. For the case of $p \geq 2$, the best existing $p^{\textnormal{th}}$-order methods can achieve a global rate of $O(\epsilon^{-2/(p+1)}\log\log(1/\epsilon))$~\citep{Bullins-2022-Higher,Lin-2023-Monotone,Jiang-2022-Generalized} but require a nontrivial line-search procedure at each iteration.  The open question has been whether it is possible to design a simple and optimal high-order method that achieves a global rate of $O(\epsilon^{-2/(p+1)})$ while dispensing with line search.

Our results settle this open problem. In particular, our $p^{\textnormal{th}}$-order method converges to a weak solution with a global rate of $O(\epsilon^{-2/(p+1)})$. A lower bound is proved in the monotone setting under a generalized linear span assumption, showing that our method is \textit{optimal} in the monotone setting. The restarted versions attain a global linear rate for $p^{\textnormal{th}}$-order uniformly monotone VIs and a local superlinear rate for strongly monotone VIs. Moreover, we prove a global rate of $O(\epsilon^{-2/p})$ for solving smooth and nonmonotone VIs satisfying the Minty condition and extend these results under $p^{\textnormal{th}}$-order uniform Minty and strong Minty conditions. Future research include the investigation of lower bounds for the nonmonotone setting with the Minty condition and the comparative study of various lower-order methods in high-order smooth VI problems; see~\citet{Nesterov-2021-Auxiliary,Nesterov-2021-Inexact,Nesterov-2021-Superfast} for recent examples of such comparisons in convex optimization. 

\section*{Acknowledgments}
This work was supported in part by the Mathematical Data Science program of the Office of Naval Research under grant number N00014-18-1-2764 and by the Vannevar Bush Faculty Fellowship program
under grant number N00014-21-1-2941.

\bibliographystyle{plainnat}
\bibliography{ref}

\begin{thebibliography}{115}
\providecommand{\natexlab}[1]{#1}
\providecommand{\url}[1]{\texttt{#1}}
\expandafter\ifx\csname urlstyle\endcsname\relax
  \providecommand{\doi}[1]{doi: #1}\else
  \providecommand{\doi}{doi: \begingroup \urlstyle{rm}\Url}\fi

\bibitem[Adil et~al.(2022)Adil, Bullins, Jambulapati, and
  Sachdeva]{Adil-2022-Optimal}
D.~Adil, B.~Bullins, A.~Jambulapati, and S.~Sachdeva.
\newblock Optimal methods for higher-order smooth monotone variational
  inequalities.
\newblock \emph{ArXiv Preprint: 2205.06167}, 2022.

\bibitem[Antipin(1978)]{Antipin-1978-Method}
A.~S. Antipin.
\newblock Method of convex programming using a symmetric modification of
  {L}agrange function.
\newblock \emph{Matekon}, 14\penalty0 (2):\penalty0 23--38, 1978.

\bibitem[Arjevani et~al.(2019)Arjevani, Shamir, and
  Shiff]{Arjevani-2019-Oracle}
Y.~Arjevani, O.~Shamir, and R.~Shiff.
\newblock Oracle complexity of second-order methods for smooth convex
  optimization.
\newblock \emph{Mathematical Programming}, 178\penalty0 (1):\penalty0 327--360,
  2019.

\bibitem[Baes(2009)]{Baes-2009-Estimate}
M.~Baes.
\newblock Estimate sequence methods: extensions and approximations.
\newblock \emph{Institute for Operations Research, ETH, Z{\"u}rich,
  Switzerland}, 2009.

\bibitem[Bauschke and Combettes(2017)]{Bauschke-2017-Convex}
H.~H. Bauschke and P.~L. Combettes.
\newblock \emph{Convex Analysis and Monotone Operator Theory in Hilbert
  Spaces}.
\newblock Springer, 2017.

\bibitem[Birgin et~al.(2016)Birgin, Gardenghi, Martinez, Santos, and
  Toint]{Birgin-2016-Evaluation}
E.~G. Birgin, J.~L. Gardenghi, J.~M. Martinez, S.~A. Santos, and P.~L. Toint.
\newblock Evaluation complexity for nonlinear constrained optimization using
  unscaled {KKT} conditions and high-order models.
\newblock \emph{SIAM Journal on Optimization}, 26\penalty0 (2):\penalty0
  951--967, 2016.

\bibitem[Birgin et~al.(2017)Birgin, Gardenghi, Mart{\'\i}nez, Santos, and
  Toint]{Birgin-2017-Worst}
E.~G. Birgin, J.~L. Gardenghi, J.~M. Mart{\'\i}nez, S.~A. Santos, and P.~L.
  Toint.
\newblock Worst-case evaluation complexity for unconstrained nonlinear
  optimization using high-order regularized models.
\newblock \emph{Mathematical Programming}, 163\penalty0 (1-2):\penalty0
  359--368, 2017.

\bibitem[Brighi and John(2002)]{Brighi-2002-Characterizations}
L.~Brighi and R.~John.
\newblock Characterizations of pseudomonotone maps and economic equilibrium.
\newblock \emph{Journal of Statistics and Management Systems}, 5\penalty0
  (1-3):\penalty0 253--273, 2002.

\bibitem[Bullins(2020)]{Bullins-2020-Highly}
B.~Bullins.
\newblock Highly smooth minimization of nonsmooth problems.
\newblock In \emph{COLT}, pages 988--1030. PMLR, 2020.

\bibitem[Bullins and Lai(2022)]{Bullins-2022-Higher}
B.~Bullins and K.~A. Lai.
\newblock Higher-order methods for convex-concave min-max optimization and
  monotone variational inequalities.
\newblock \emph{SIAM Journal on Optimization}, 32\penalty0 (3):\penalty0
  2208--2229, 2022.

\bibitem[Carmon and Duchi(2019)]{Carmon-2019-Gradient}
Y.~Carmon and J.~Duchi.
\newblock Gradient descent finds the cubic-regularized nonconvex {N}ewton step.
\newblock \emph{SIAM Journal on Optimization}, 29\penalty0 (3):\penalty0
  2146--2178, 2019.

\bibitem[Carmon et~al.(2020)Carmon, Duchi, Hinder, and
  Sidford]{Carmon-2020-Lower}
Y.~Carmon, J.~C. Duchi, O.~Hinder, and A.~Sidford.
\newblock Lower bounds for finding stationary points {I}.
\newblock \emph{Mathematical Programming}, 184\penalty0 (1-2):\penalty0
  71--120, 2020.

\bibitem[Carmon et~al.(2022)Carmon, Hausler, Jambulapati, Jin, and
  Sidford]{Carmon-2022-Optimal}
Y.~Carmon, D.~Hausler, A.~Jambulapati, Y.~Jin, and A.~Sidford.
\newblock Optimal and adaptive {M}onteiro-{S}vaiter acceleration.
\newblock In \emph{NeurIPS}, pages 20338--20350, 2022.

\bibitem[Cartis et~al.(2010)Cartis, Gould, and Toint]{Cartis-2010-Complexity}
C.~Cartis, N.~I.~M. Gould, and P.~L. Toint.
\newblock On the complexity of steepest descent, {N}ewton's and regularized
  {N}ewton's methods for nonconvex unconstrained optimization problems.
\newblock \emph{SIAM Journal on Optimization}, 20\penalty0 (6):\penalty0
  2833--2852, 2010.

\bibitem[Cartis et~al.(2011{\natexlab{a}})Cartis, Gould, and
  Toint]{Cartis-2011-Adaptive-I}
C.~Cartis, N.~I.~M. Gould, and P.~L. Toint.
\newblock Adaptive cubic regularisation methods for unconstrained optimization.
  {P}art {I}: motivation, convergence and numerical results.
\newblock \emph{Mathematical Programming}, 127\penalty0 (2):\penalty0 245--295,
  2011{\natexlab{a}}.

\bibitem[Cartis et~al.(2011{\natexlab{b}})Cartis, Gould, and
  Toint]{Cartis-2011-Adaptive-II}
C.~Cartis, N.~I.~M. Gould, and P.~L. Toint.
\newblock Adaptive cubic regularisation methods for unconstrained optimization.
  {P}art {II}: worst-case function-and derivative-evaluation complexity.
\newblock \emph{Mathematical Programming}, 130\penalty0 (2):\penalty0 295--319,
  2011{\natexlab{b}}.

\bibitem[Cartis et~al.(2019)Cartis, Gould, and Toint]{Cartis-2019-Universal}
C.~Cartis, N.~I. Gould, and P.~L. Toint.
\newblock Universal regularization methods: Varying the power, the smoothness
  and the accuracy.
\newblock \emph{SIAM Journal on Optimization}, 29\penalty0 (1):\penalty0
  595--615, 2019.

\bibitem[Cartis et~al.(2022)Cartis, Gould, and Toint]{Cartis-2022-Evaluation}
C.~Cartis, N.~I.~M. Gould, and P.~L. Toint.
\newblock \emph{Evaluation Complexity of Algorithms for Nonconvex Optimization:
  Theory, Computation and Perspectives}.
\newblock SIAM, 2022.

\bibitem[Cesa-Bianchi and Lugosi(2006)]{Cesa-2006-Prediction}
N.~Cesa-Bianchi and G.~Lugosi.
\newblock \emph{Prediction, Learning, and Games}.
\newblock Cambridge University Press, 2006.

\bibitem[Chen et~al.(2017)Chen, Lan, and Ouyang]{Chen-2017-Accelerated}
Y.~Chen, G.~Lan, and Y.~Ouyang.
\newblock Accelerated schemes for a class of variational inequalities.
\newblock \emph{Mathematical Programming}, 165\penalty0 (1):\penalty0 113--149,
  2017.

\bibitem[Choi et~al.(1990)Choi, DeSarbo, and Harker]{Choi-1990-Product}
S.~C. Choi, W.~S. DeSarbo, and P.~T. Harker.
\newblock Product positioning under price competition.
\newblock \emph{Management Science}, 36\penalty0 (2):\penalty0 175--199, 1990.

\bibitem[Cottle et~al.(1980)Cottle, Giannessi, and
  Lions]{Cottle-1980-Variational}
R.~Cottle, F.~Giannessi, and J-L. Lions.
\newblock \emph{Variational Inequalities and Complementarity Problems: Theory
  and Applications}.
\newblock John Wiley \& Sons, 1980.

\bibitem[Dang and Lan(2015)]{Dang-2015-Convergence}
C.~D. Dang and G.~Lan.
\newblock On the convergence properties of non-{E}uclidean extragradient
  methods for variational inequalities with generalized monotone operators.
\newblock \emph{Computational Optimization and Applications}, 60\penalty0
  (2):\penalty0 277--310, 2015.

\bibitem[Daskalakis et~al.(2021)Daskalakis, Skoulakis, and
  Zampetakis]{Daskalakis-2021-Complexity}
C.~Daskalakis, S.~Skoulakis, and M.~Zampetakis.
\newblock The complexity of constrained min-max optimization.
\newblock In \emph{STOC}, pages 1466--1478, 2021.

\bibitem[Diakonikolas(2020)]{Diakonikolas-2020-Halpern}
J.~Diakonikolas.
\newblock Halpern iteration for near-optimal and parameter-free monotone
  inclusion and strong solutions to variational inequalities.
\newblock In \emph{COLT}, pages 1428--1451. PMLR, 2020.

\bibitem[Diakonikolas et~al.(2021)Diakonikolas, Daskalakis, and
  Jordan]{Diakonikolas-2021-Efficient}
J.~Diakonikolas, C.~Daskalakis, and M.~I. Jordan.
\newblock Efficient methods for structured nonconvex-nonconcave min-max
  optimization.
\newblock In \emph{AISTATS}, pages 2746--2754. PMLR, 2021.

\bibitem[Doikov and Nesterov(2022)]{Doikov-2022-Local}
N.~Doikov and Y.~Nesterov.
\newblock Local convergence of tensor methods.
\newblock \emph{Mathematical Programming}, 193\penalty0 (1):\penalty0 315--336,
  2022.

\bibitem[Ewerhart(2014)]{Ewerhart-2014-Cournot}
C.~Ewerhart.
\newblock Cournot games with biconcave demand.
\newblock \emph{Games and Economic Behavior}, 85:\penalty0 37--47, 2014.

\bibitem[Facchinei and Pang(2007)]{Facchinei-2007-Finite}
F.~Facchinei and J-S. Pang.
\newblock \emph{Finite-Dimensional Variational Inequalities and Complementarity
  Problems}.
\newblock Springer Science \& Business Media, 2007.

\bibitem[Fercoq and Qu(2019)]{Fercoq-2019-Adaptive}
O.~Fercoq and Z.~Qu.
\newblock Adaptive restart of accelerated gradient methods under local
  quadratic growth condition.
\newblock \emph{IMA Journal of Numerical Analysis}, 39\penalty0 (4):\penalty0
  2069--2095, 2019.

\bibitem[Freund and Lu(2018)]{Freund-2018-New}
R.~M. Freund and H.~Lu.
\newblock New computational guarantees for solving convex optimization problems
  with first order methods, via a function growth condition measure.
\newblock \emph{Mathematical Programming}, 170\penalty0 (2):\penalty0 445--477,
  2018.

\bibitem[Fukushima(1992)]{Fukushima-1992-Equivalent}
M.~Fukushima.
\newblock Equivalent differentiable optimization problems and descent methods
  for asymmetric variational inequality problems.
\newblock \emph{Mathematical Programming}, 53:\penalty0 99--110, 1992.

\bibitem[Gallego and Hu(2014)]{Gallego-2014-Dynamic}
G.~Gallego and M.~Hu.
\newblock Dynamic pricing of perishable assets under competition.
\newblock \emph{Management Science}, 60\penalty0 (5):\penalty0 1241--1259,
  2014.

\bibitem[Gasnikov et~al.(2019)Gasnikov, Dvurechensky, Gorbunov, Vorontsova,
  Selikhanovych, Uribe, Jiang, Wang, Zhang, Bubeck, Jiang, Lee, Li, and
  Sidford]{Gasnikov-2019-Near}
A.~Gasnikov, P.~Dvurechensky, E.~Gorbunov, E.~Vorontsova, D.~Selikhanovych,
  C.~A. Uribe, B.~Jiang, H.~Wang, S.~Zhang, S.~Bubeck, Q.~Jiang, Y.~T. Lee,
  Y.~Li, and A.~Sidford.
\newblock Near optimal methods for minimizing convex functions with {L}ipschitz
  $p$-th derivatives.
\newblock In \emph{COLT}, pages 1392--1393. PMLR, 2019.

\bibitem[Ghadimi and Lan(2013)]{Ghadimi-2013-Optimal}
S.~Ghadimi and G.~Lan.
\newblock Optimal stochastic approximation algorithms for strongly convex
  stochastic composite optimization, {II}: shrinking procedures and optimal
  algorithms.
\newblock \emph{SIAM Journal on Optimization}, 23\penalty0 (4):\penalty0
  2061--2089, 2013.

\bibitem[Giselsson and Boyd(2014)]{Giselsson-2014-Monotonicity}
P.~Giselsson and S.~Boyd.
\newblock Monotonicity and restart in fast gradient methods.
\newblock In \emph{CDC}, pages 5058--5063. IEEE, 2014.

\bibitem[Goodfellow et~al.(2014)Goodfellow, Pouget-Abadie, Mirza, Xu,
  Warde-Farley, Ozair, Courville, and Bengio]{Goodfellow-2014-Generative}
I.~Goodfellow, J.~Pouget-Abadie, M.~Mirza, B.~Xu, D.~Warde-Farley, S.~Ozair,
  A.~Courville, and Y.~Bengio.
\newblock Generative adversarial nets.
\newblock In \emph{NIPS}, pages 2672--2680, 2014.

\bibitem[Gould et~al.(1999)Gould, Lucidi, Roma, and Toint]{Gould-1999-Solving}
N.~I.~M Gould, S.~Lucidi, M.~Roma, and P.~L. Toint.
\newblock Solving the trust-region subproblem using the {L}anczos method.
\newblock \emph{SIAM Journal on Optimization}, 9\penalty0 (2):\penalty0
  504--525, 1999.

\bibitem[Gould et~al.(2010)Gould, Robinson, and Thorne]{Gould-2010-Solving}
N.~I.~M. Gould, D.~P. Robinson, and H.~S. Thorne.
\newblock On solving trust-region and other regularised subproblems in
  optimization.
\newblock \emph{Mathematical Programming Computation}, 2\penalty0 (1):\penalty0
  21--57, 2010.

\bibitem[Grapiglia and Nesterov(2017)]{Grapiglia-2017-Regularized}
G.~Grapiglia and Y.~Nesterov.
\newblock Regularized {N}ewton methods for minimizing functions with
  {H}\"{o}lder continuous {H}essians.
\newblock \emph{SIAM Journal on Optimization}, 27\penalty0 (1):\penalty0
  478--506, 2017.

\bibitem[Grapiglia and Nesterov(2019)]{Grapiglia-2019-Accelerated}
G.~Grapiglia and Y.~Nesterov.
\newblock Accelerated regularized {N}ewton methods for minimizing composite
  convex functions.
\newblock \emph{SIAM Journal on Optimization}, 29\penalty0 (1):\penalty0
  77--99, 2019.

\bibitem[Grapiglia and Nesterov(2020)]{Grapiglia-2020-Tensor}
G.~Grapiglia and Y.~Nesterov.
\newblock Tensor methods for minimizing convex functions with {H}\"{o}lder
  continuous higher-order derivatives.
\newblock \emph{SIAM Journal on Optimization}, 30\penalty0 (4):\penalty0
  2750--2779, 2020.

\bibitem[Grapiglia and Nesterov(2021)]{Grapiglia-2021-Inexact}
G.~Grapiglia and Y.~Nesterov.
\newblock On inexact solution of auxiliary problems in tensor methods for
  convex optimization.
\newblock \emph{Optimization Methods and Software}, 36\penalty0 (1):\penalty0
  145--170, 2021.

\bibitem[Grapiglia and Nesterov(2023)]{Grapiglia-2023-Adaptive}
G.~Grapiglia and Y.~Nesterov.
\newblock Adaptive third-order methods for composite convex optimization.
\newblock \emph{SIAM Journal on Optimization}, 33\penalty0 (3):\penalty0
  1855--1883, 2023.

\bibitem[Hammond and Magnanti(1987)]{Hammond-1987-Generalized}
J.~H. Hammond and T.~L. Magnanti.
\newblock Generalized descent methods for asymmetric systems of equations.
\newblock \emph{Mathematics of Operations Research}, 12\penalty0 (4):\penalty0
  678--699, 1987.

\bibitem[Harker and Pang(1990)]{Harker-1990-Finite}
P.~T. Harker and J-S. Pang.
\newblock Finite-dimensional variational inequality and nonlinear
  complementarity problems: A survey of theory, algorithms and applications.
\newblock \emph{Mathematical Programming}, 48\penalty0 (1):\penalty0 161--220,
  1990.

\bibitem[Hartman and Stampacchia(1966)]{Hartman-1966-Some}
P.~Hartman and G.~Stampacchia.
\newblock On some non-linear elliptic differential-functional equations.
\newblock \emph{Acta Mathematica}, 115:\penalty0 271--310, 1966.

\bibitem[Huang and Zhang(2022)]{Huang-2022-Approximation}
K.~Huang and S.~Zhang.
\newblock An approximation-based regularized extra-gradient method for monotone
  variational inequalities.
\newblock \emph{ArXiv Preprint: 2210.04440}, 2022.

\bibitem[Huang and Zhang(2023)]{Huang-2023-Beyond}
K.~Huang and S.~Zhang.
\newblock Beyond monotone variational inequalities: Solution methods and
  iteration complexities.
\newblock \emph{ArXiv Preprint: 2304.04153}, 2023.

\bibitem[Huang et~al.(2022)Huang, Zhang, and Zhang]{Huang-2022-Cubic}
K.~Huang, J.~Zhang, and S.~Zhang.
\newblock Cubic regularized {N}ewton method for the saddle point models: A
  global and local convergence analysis.
\newblock \emph{Journal of Scientific Computing}, 91\penalty0 (2):\penalty0
  1--31, 2022.

\bibitem[Iusem et~al.(2017)Iusem, Jofr{\'e}, Oliveira, and
  Thompson]{Iusem-2017-Extragradient}
A.~N. Iusem, A.~Jofr{\'e}, R.~I. Oliveira, and P.~Thompson.
\newblock Extragradient method with variance reduction for stochastic
  variational inequalities.
\newblock \emph{SIAM Journal on Optimization}, 27\penalty0 (2):\penalty0
  686--724, 2017.

\bibitem[Jiang et~al.(2020)Jiang, Lin, and Zhang]{Jiang-2020-Unified}
B.~Jiang, T.~Lin, and S.~Zhang.
\newblock A unified adaptive tensor approximation scheme to accelerate
  composite convex optimization.
\newblock \emph{SIAM Journal on Optimization}, 30\penalty0 (4):\penalty0
  2897--2926, 2020.

\bibitem[Jiang and Mokhtari(2022)]{Jiang-2022-Generalized}
R.~Jiang and A.~Mokhtari.
\newblock Generalized optimistic methods for convex-concave saddle point
  problems.
\newblock \emph{ArXiv Preprint: 2202.09674}, 2022.

\bibitem[Kannan and Shanbhag(2019)]{Kannan-2019-Optimal}
A.~Kannan and U.~V. Shanbhag.
\newblock Optimal stochastic extragradient schemes for pseudomonotone
  stochastic variational inequality problems and their variants.
\newblock \emph{Computational Optimization and Applications}, 74\penalty0
  (3):\penalty0 779--820, 2019.

\bibitem[Kinderlehrer and Stampacchia(2000)]{Kinderlehrer-2000-Introduction}
D.~Kinderlehrer and G.~Stampacchia.
\newblock \emph{An Introduction to Variational Inequalities and Their
  Applications}.
\newblock SIAM, 2000.

\bibitem[Kleinberg et~al.(2018)Kleinberg, Li, and
  Yuan]{Kleinberg-2018-Alternative}
B.~Kleinberg, Y.~Li, and Y.~Yuan.
\newblock An alternative view: When does {SGD} escape local minima?
\newblock In \emph{ICML}, pages 2698--2707. PMLR, 2018.

\bibitem[Kornowski and Shamir(2020)]{Kornowski-2020-High}
G.~Kornowski and O.~Shamir.
\newblock High-order oracle complexity of smooth and strongly convex
  optimization.
\newblock \emph{ArXiv Preprint: 2010.06642}, 2020.

\bibitem[Korpelevich(1976)]{Korpelevich-1976-Extragradient}
G.~M. Korpelevich.
\newblock The extragradient method for finding saddle points and other
  problems.
\newblock \emph{Matecon}, 12:\penalty0 747--756, 1976.

\bibitem[Kotsalis et~al.(2022)Kotsalis, Lan, and Li]{Kotsalis-2022-Simple}
G.~Kotsalis, G.~Lan, and T.~Li.
\newblock Simple and optimal methods for stochastic variational inequalities,
  {I}: Operator extrapolation.
\newblock \emph{SIAM Journal on Optimization}, 32\penalty0 (3):\penalty0
  2041--2073, 2022.

\bibitem[Kovalev and Gasnikov(2022)]{Kovalev-2022-First}
D.~Kovalev and A.~Gasnikov.
\newblock The first optimal acceleration of high-order methods in smooth convex
  optimization.
\newblock In \emph{NeurIPS}, pages 35339--35351, 2022.

\bibitem[Lan and Zhou(2018{\natexlab{a}})]{Lan-2018-Optimal}
G.~Lan and Y.~Zhou.
\newblock An optimal randomized incremental gradient method.
\newblock \emph{Mathematical Programming}, 171\penalty0 (1):\penalty0 167--215,
  2018{\natexlab{a}}.

\bibitem[Lan and Zhou(2018{\natexlab{b}})]{Lan-2018-Random}
G.~Lan and Y.~Zhou.
\newblock Random gradient extrapolation for distributed and stochastic
  optimization.
\newblock \emph{SIAM Journal on Optimization}, 28\penalty0 (4):\penalty0
  2753--2782, 2018{\natexlab{b}}.

\bibitem[Lemke and Howson(1964)]{Lemke-1964-Equilibrium}
C.~E. Lemke and J.~T. Howson.
\newblock Equilibrium points of bimatrix games.
\newblock \emph{Journal of the Society for Industrial and Applied Mathematics},
  12\penalty0 (2):\penalty0 413--423, 1964.

\bibitem[Li and Yuan(2017)]{Li-2017-Convergence}
Y.~Li and Y.~Yuan.
\newblock Convergence analysis of two-layer neural networks with {R}e{LU}
  activation.
\newblock In \emph{NIPS}, pages 597--607, 2017.

\bibitem[Lin and Jordan(2022)]{Lin-2022-Control}
T.~Lin and M.~I. Jordan.
\newblock A control-theoretic perspective on optimal high-order optimization.
\newblock \emph{Mathematical Programming}, 195\penalty0 (1):\penalty0 929--975,
  2022.

\bibitem[Lin and Jordan(2023)]{Lin-2023-Monotone}
T.~Lin and M.~I. Jordan.
\newblock Monotone inclusions, acceleration, and closed-loop control.
\newblock \emph{Mathematics of Operations Research}, 48\penalty0 (4):\penalty0
  2353--2382, 2023.

\bibitem[Lin et~al.(2022)Lin, Mertikopoulos, and Jordan]{Lin-2022-Explicit}
T.~Lin, P.~Mertikopoulos, and M.~I. Jordan.
\newblock Explicit second-order min-max optimization methods with optimal
  convergence guarantee.
\newblock \emph{ArXiv Preprint: 2210.12860}, 2022.

\bibitem[Liu et~al.(2021)Liu, Rafique, Lin, and Yang]{Liu-2021-First}
M.~Liu, H.~Rafique, Q.~Lin, and T.~Yang.
\newblock First-order convergence theory for weakly-convex-weakly-concave
  min-max problems.
\newblock \emph{Journal of Machine Learning Research}, 22\penalty0
  (169):\penalty0 1--34, 2021.

\bibitem[Madry et~al.(2018)Madry, Makelov, Schmidt, Tsipras, and
  Vladu]{Madry-2018-Towards}
A.~Madry, A.~Makelov, L.~Schmidt, D.~Tsipras, and A.~Vladu.
\newblock Towards deep learning models resistant to adversarial attacks.
\newblock In \emph{ICLR}, 2018.
\newblock URL \url{https://openreview.net/forum?id=rJzIBfZAb}.

\bibitem[Magnanti and Perakis(1995)]{Magnanti-1995-Unifying}
T.~L. Magnanti and G.~Perakis.
\newblock A unifying geometric solution framework and complexity analysis for
  variational inequalities.
\newblock \emph{Mathematical Programming}, 71\penalty0 (3):\penalty0 327--351,
  1995.

\bibitem[Magnanti and Perakis(1997{\natexlab{a}})]{Magnanti-1997-Averaging}
T.~L. Magnanti and G.~Perakis.
\newblock Averaging schemes for variational inequalities and systems of
  equations.
\newblock \emph{Mathematics of Operations Research}, 22\penalty0 (3):\penalty0
  568--587, 1997{\natexlab{a}}.

\bibitem[Magnanti and Perakis(1997{\natexlab{b}})]{Magnanti-1997-Orthogonality}
T.~L. Magnanti and G.~Perakis.
\newblock The orthogonality theorem and the strong-f-monotonicity condition for
  variational inequality algorithms.
\newblock \emph{SIAM Journal on Optimization}, 7\penalty0 (1):\penalty0
  248--273, 1997{\natexlab{b}}.

\bibitem[Magnanti and Perakis(2004)]{Magnanti-2004-Solving}
T.~L. Magnanti and G.~Perakis.
\newblock Solving variational inequality and fixed point problems by line
  searches and potential optimization.
\newblock \emph{Mathematical Programming}, 101\penalty0 (3):\penalty0 435--461,
  2004.

\bibitem[Marques~Alves(2022)]{Marques-2022-Variants}
M.~Marques~Alves.
\newblock Variants of the {A}-{HPE} and large-step {A}-{HPE} algorithms for
  strongly convex problems with applications to accelerated high-order tensor
  methods.
\newblock \emph{Optimization Methods and Software}, 37\penalty0 (6):\penalty0
  2021--2051, 2022.

\bibitem[Mart{\'\i}nez(2017)]{Martinez-2017-High}
J.~Mart{\'\i}nez.
\newblock On high-order model regularization for constrained optimization.
\newblock \emph{SIAM Journal on Optimization}, 27\penalty0 (4):\penalty0
  2447--2458, 2017.

\bibitem[Mertikopoulos and Zhou(2019)]{Mertikopoulos-2019-Learning}
P.~Mertikopoulos and Z.~Zhou.
\newblock Learning in games with continuous action sets and unknown payoff
  functions.
\newblock \emph{Mathematical Programming}, 173\penalty0 (1):\penalty0 465--507,
  2019.

\bibitem[Minty(1962)]{Minty-1962-Monotone}
G.~J. Minty.
\newblock Monotone (nonlinear) operators in {H}ilbert space.
\newblock \emph{Duke Mathematical Journal}, 29\penalty0 (3):\penalty0 341--346,
  1962.

\bibitem[Mokhtari et~al.(2020)Mokhtari, Ozdaglar, and
  Pattathil]{Mokhtari-2020-Convergence}
A.~Mokhtari, A.~E. Ozdaglar, and S.~Pattathil.
\newblock Convergence rate of o(1/k) for optimistic gradient and extragradient
  methods in smooth convex-concave saddle point problems.
\newblock \emph{SIAM Journal on Optimization}, 30\penalty0 (4):\penalty0
  3230--3251, 2020.

\bibitem[Monteiro and Svaiter(2010)]{Monteiro-2010-Complexity}
R.~D.~C. Monteiro and B.~F. Svaiter.
\newblock On the complexity of the hybrid proximal extragradient method for the
  iterates and the ergodic mean.
\newblock \emph{SIAM Journal on Optimization}, 20\penalty0 (6):\penalty0
  2755--2787, 2010.

\bibitem[Monteiro and Svaiter(2011)]{Monteiro-2011-Complexity}
R.~D.~C. Monteiro and B.~F. Svaiter.
\newblock Complexity of variants of {T}seng's modified {FB} splitting and
  {K}orpelevich's methods for hemivariational inequalities with applications to
  saddle-point and convex optimization problems.
\newblock \emph{SIAM Journal on Optimization}, 21\penalty0 (4):\penalty0
  1688--1720, 2011.

\bibitem[Monteiro and Svaiter(2012)]{Monteiro-2012-Iteration}
R.~D.~C. Monteiro and B.~F. Svaiter.
\newblock Iteration-complexity of a {N}ewton proximal extragradient method for
  monotone variational inequalities and inclusion problems.
\newblock \emph{SIAM Journal on Optimization}, 22\penalty0 (3):\penalty0
  914--935, 2012.

\bibitem[Monteiro and Svaiter(2013)]{Monteiro-2013-Accelerated}
R.~D.~C. Monteiro and B.~F. Svaiter.
\newblock An accelerated hybrid proximal extragradient method for convex
  optimization and its implications to second-order methods.
\newblock \emph{SIAM Journal on Optimization}, 23\penalty0 (2):\penalty0
  1092--1125, 2013.

\bibitem[Necoara et~al.(2019)Necoara, Nesterov, and
  Glineur]{Necoara-2019-Linear}
I.~Necoara, Y.~Nesterov, and F.~Glineur.
\newblock Linear convergence of first order methods for non-strongly convex
  optimization.
\newblock \emph{Mathematical Programming}, 175\penalty0 (1):\penalty0 69--107,
  2019.

\bibitem[Nemirovski(2004)]{Nemirovski-2004-Prox}
A.~Nemirovski.
\newblock Prox-method with rate of convergence o(1/t) for variational
  inequalities with {L}ipschitz continuous monotone operators and smooth
  convex-concave saddle point problems.
\newblock \emph{SIAM Journal on Optimization}, 15\penalty0 (1):\penalty0
  229--251, 2004.

\bibitem[Nemirovski and Nesterov(1985)]{Nemirovski-1985-Optimal}
A.~Nemirovski and Y.~Nesterov.
\newblock Optimal methods of smooth convex minimization.
\newblock \emph{USSR Computational Mathematics and Mathematical Physics},
  25\penalty0 (2):\penalty0 21--30, 1985.

\bibitem[Nesterov(1983)]{Nesterov-1983-Method}
Y.~Nesterov.
\newblock A method of solving a convex programming problem with convergence
  rate o(k\^{}2).
\newblock In \emph{Doklady Akademii Nauk}, volume 269, pages 543--547. Russian
  Academy of Sciences, 1983.

\bibitem[Nesterov(2006)]{Nesterov-2006-Constrained}
Y.~Nesterov.
\newblock Cubic regularization of {N}ewton’s method for convex problems with
  constraints.
\newblock Technical report, Universit{\'e} catholique de Louvain, Center for
  Operations Research and Econometrics (CORE), 2006.

\bibitem[Nesterov(2007)]{Nesterov-2007-Dual}
Y.~Nesterov.
\newblock Dual extrapolation and its applications to solving variational
  inequalities and related problems.
\newblock \emph{Mathematical Programming}, 109\penalty0 (2):\penalty0 319--344,
  2007.

\bibitem[Nesterov(2008)]{Nesterov-2008-Accelerating}
Y.~Nesterov.
\newblock Accelerating the cubic regularization of {N}ewton’s method on
  convex problems.
\newblock \emph{Mathematical Programming}, 112\penalty0 (1):\penalty0 159--181,
  2008.

\bibitem[Nesterov(2013)]{Nesterov-2013-Gradient}
Y.~Nesterov.
\newblock Gradient methods for minimizing composite functions.
\newblock \emph{Mathematical Programming}, 140\penalty0 (1):\penalty0 125--161,
  2013.

\bibitem[Nesterov(2018)]{Nesterov-2018-Lectures}
Y.~Nesterov.
\newblock \emph{Lectures on Convex Optimization}, volume 137.
\newblock Springer, 2018.

\bibitem[Nesterov(2021{\natexlab{a}})]{Nesterov-2021-Auxiliary}
Y.~Nesterov.
\newblock Inexact high-order proximal-point methods with auxiliary search
  procedure.
\newblock \emph{SIAM Journal on Optimization}, 31\penalty0 (4):\penalty0
  2807--2828, 2021{\natexlab{a}}.

\bibitem[Nesterov(2021{\natexlab{b}})]{Nesterov-2021-Implementable}
Y.~Nesterov.
\newblock Implementable tensor methods in unconstrained convex optimization.
\newblock \emph{Mathematical Programming}, 186\penalty0 (1):\penalty0 157--183,
  2021{\natexlab{b}}.

\bibitem[Nesterov(2021{\natexlab{c}})]{Nesterov-2021-Inexact}
Y.~Nesterov.
\newblock Inexact accelerated high-order proximal-point methods.
\newblock \emph{Mathematical Programming}, pages 1--26, 2021{\natexlab{c}}.

\bibitem[Nesterov(2021{\natexlab{d}})]{Nesterov-2021-Superfast}
Y.~Nesterov.
\newblock Superfast second-order methods for unconstrained convex optimization.
\newblock \emph{Journal of Optimization Theory and Applications}, 191\penalty0
  (1):\penalty0 1--30, 2021{\natexlab{d}}.

\bibitem[Nesterov and Polyak(2006)]{Nesterov-2006-Cubic}
Y.~Nesterov and B.~Polyak.
\newblock Cubic regularization of {N}ewton method and its global performance.
\newblock \emph{Mathematical Programming}, 108\penalty0 (1):\penalty0 177--205,
  2006.

\bibitem[Ostroukhov et~al.(2020)Ostroukhov, Kamalov, Dvurechensky, and
  Gasnikov]{Ostroukhov-2020-Tensor}
P.~Ostroukhov, R.~Kamalov, P.~Dvurechensky, and A.~Gasnikov.
\newblock Tensor methods for strongly convex strongly concave saddle point
  problems and strongly monotone variational inequalities.
\newblock \emph{ArXiv Preprint: 2012.15595}, 2020.

\bibitem[Ouyang and Xu(2021)]{Ouyang-2021-Lower}
Y.~Ouyang and Y.~Xu.
\newblock Lower complexity bounds of first-order methods for convex-concave
  bilinear saddle-point problems.
\newblock \emph{Mathematical Programming}, 185\penalty0 (1):\penalty0 1--35,
  2021.

\bibitem[O’donoghue and Candes(2015)]{Donoghue-2015-Adaptive}
B.~O’donoghue and E.~Candes.
\newblock Adaptive restart for accelerated gradient schemes.
\newblock \emph{Foundations of Computational Mathematics}, 15\penalty0
  (3):\penalty0 715--732, 2015.

\bibitem[Popov(1980)]{Popov-1980-Modification}
L.~D. Popov.
\newblock A modification of the {A}rrow-{H}urwicz method for search of saddle
  points.
\newblock \emph{Mathematical notes of the Academy of Sciences of the USSR},
  28\penalty0 (5):\penalty0 845--848, 1980.

\bibitem[Ralph and Wright(1997)]{Ralph-1997-Superlinear}
D.~Ralph and S.~J. Wright.
\newblock Superlinear convergence of an interior-point method for monotone
  variational inequalities.
\newblock \emph{Complementarity and Variational Problems: State of the Art},
  pages 345--385, 1997.

\bibitem[Renegar and Grimmer(2022)]{Renegar-2022-Simple}
J.~Renegar and B.~Grimmer.
\newblock A simple nearly optimal restart scheme for speeding up first-order
  methods.
\newblock \emph{Foundations of Computational Mathematics}, 22\penalty0
  (1):\penalty0 211--256, 2022.

\bibitem[Rockafellar and Wets(2009)]{Rockafellar-2009-Variational}
R.~T. Rockafellar and R.~J-B. Wets.
\newblock \emph{Variational Analysis}, volume 317.
\newblock Springer Science \& Business Media, 2009.

\bibitem[Roulet and d'Aspremont(2017)]{Roulet-2017-Sharpness}
V.~Roulet and A.~d'Aspremont.
\newblock Sharpness, restart and acceleration.
\newblock In \emph{NIPS}, pages 1119--1129, 2017.

\bibitem[Scarf(1967)]{Scarf-1967-Approximation}
H.~Scarf.
\newblock The approximation of fixed points of a continuous mapping.
\newblock \emph{SIAM Journal on Applied Mathematics}, 15\penalty0 (5):\penalty0
  1328--1343, 1967.

\bibitem[Sinha et~al.(2018)Sinha, Namkoong, and Duchi]{Sinha-2018-Certifiable}
A.~Sinha, H.~Namkoong, and J.~Duchi.
\newblock Certifiable distributional robustness with principled adversarial
  training.
\newblock In \emph{ICLR}, 2018.
\newblock URL \url{https://openreview.net/forum?id=Hk6kPgZA-}.

\bibitem[Solodov and Svaiter(1999)]{Solodov-1999-New}
M.~V. Solodov and B.~F. Svaiter.
\newblock A new projection method for variational inequality problems.
\newblock \emph{SIAM Journal on Control and Optimization}, 37\penalty0
  (3):\penalty0 765--776, 1999.

\bibitem[Song et~al.(2020)Song, Zhou, Zhou, Jiang, and
  Ma]{Song-2020-Optimistic}
C.~Song, Z.~Zhou, Y.~Zhou, Y.~Jiang, and Y.~Ma.
\newblock Optimistic dual extrapolation for coherent non-monotone variational
  inequalities.
\newblock In \emph{NeurIPS}, pages 14303--14314, 2020.

\bibitem[Song et~al.(2021)Song, Jiang, and Ma]{Song-2021-Unified}
C.~Song, Y.~Jiang, and Y.~Ma.
\newblock Unified acceleration of high-order algorithms under general
  {H}\"{o}lder continuity.
\newblock \emph{SIAM Journal on Optimization}, 31\penalty0 (3):\penalty0
  1797--1826, 2021.

\bibitem[Titov et~al.(2022)Titov, Ablaev, Alkousa, Stonyakin, and
  Gasnikov]{Titov-2022-Some}
A.~A. Titov, S.~S. Ablaev, M.~S. Alkousa, F.~S. Stonyakin, and A.~V. Gasnikov.
\newblock Some adaptive first-order methods for variational inequalities with
  relatively strongly monotone operators and generalized smoothness.
\newblock In \emph{ICOPTA}, pages 135--150. Springer, 2022.

\bibitem[Todd(2013)]{Todd-1976-Computation}
M.~J. Todd.
\newblock \emph{The Computation of Fixed Points and Applications}.
\newblock Springer, 2013.

\bibitem[Tr{\'e}moli{\`e}res et~al.(2011)Tr{\'e}moli{\`e}res, Lions, and
  Glowinski]{Tremolieres-2011-Numerical}
R.~Tr{\'e}moli{\`e}res, J-L. Lions, and R.~Glowinski.
\newblock \emph{Numerical Analysis of Variational Inequalities}.
\newblock Elsevier, 2011.

\bibitem[Tseng(2000)]{Tseng-2000-Modified}
P.~Tseng.
\newblock A modified forward-backward splitting method for maximal monotone
  mappings.
\newblock \emph{SIAM Journal on Control and Optimization}, 38\penalty0
  (2):\penalty0 431--446, 2000.

\bibitem[Wibisono et~al.(2016)Wibisono, Wilson, and
  Jordan]{Wibisono-2016-Variational}
A.~Wibisono, A.~C. Wilson, and M.~I. Jordan.
\newblock A variational perspective on accelerated methods in optimization.
\newblock \emph{Proceedings of the National Academy of Sciences}, 113\penalty0
  (47):\penalty0 E7351--E7358, 2016.

\bibitem[Zhang et~al.(2022)Zhang, Hong, and Zhang]{Zhang-2022-Lower}
J.~Zhang, M.~Hong, and S.~Zhang.
\newblock On lower iteration complexity bounds for the convex concave saddle
  point problems.
\newblock \emph{Mathematical Programming}, 194\penalty0 (1):\penalty0 901--935,
  2022.

\end{thebibliography}

\end{document}